\documentclass{amsart}
\usepackage{MnSymbol}
\usepackage{scalerel}

\usepackage{hyperref}
\hypersetup{
    colorlinks,
    citecolor=black,
    filecolor=black,
    linkcolor=black,
    urlcolor=black
}
\usepackage{comment}

\DeclareFontFamily{OT1}{pzc}{}
\DeclareFontShape{OT1}{pzc}{m}{it}{<-> s * [1.100] pzcmi7t}{}
\DeclareMathAlphabet{\mathpzc}{OT1}{pzc}{m}{it}

    \usepackage{etoolbox}
    \patchcmd{\section}{\scshape}{\large\bfseries}{}{}
    \makeatletter
    \renewcommand{\@secnumfont}{\bfseries}
    \makeatother

\usepackage{tikz-cd}
\tikzcdset{arrow style=math font}

\tikzcdset{scale cd/.style={every label/.append style={scale=#1},
    cells={nodes={scale=#1}}}}
\usepackage{tikz}
\usetikzlibrary{arrows}
\usepackage{caption}
\usepackage{float}

\usepackage[maxbibnames=99]{biblatex}
\usepackage{booktabs}
\bibliography{references}

\numberwithin{equation}{section}
\newtheorem{theorem}{Theorem}[section]
\newtheorem*{theorem*}{Theorem}
\newtheorem{corollary}[theorem]{Corollary}
\newtheorem{lemma}[theorem]{Lemma}
\newtheorem{proposition}[theorem]{Proposition}
\theoremstyle{definition}

\newtheorem*{question*}{Question}

\newtheorem{remark}[theorem]{Remark}
\newtheorem{example}[theorem]{Example}

\def\Ker{\mathrm{Ker}}
\def\Im{\mathrm{Im}}

\def\KK{\mathbb{K}}
\def\RR{\mathbb{R}}
\def\ZZ{\mathbb{Z}}
\def\SH{\mathrm{SH}}
\def\RC{\mathrm{RC}}
\def\MH{\mathrm{MH}}

\let\oldtocsection=\tocsection 
\let\oldtocsubsection=\tocsubsection 
\renewcommand{\tocsection}[2]{\hspace{0mm}\oldtocsection{#1}{#2}}
\renewcommand{\tocsubsection}[2]{\hspace{1em}\oldtocsubsection{#1}{#2}}

\title[Nested homotopy models of finite metric spaces]{Nested homotopy models of finite metric spaces and their spectral homology} 

\author{Sergei O. Ivanov} 
\address{
Beijing Institute of Mathematical Sciences and Applications}
\email{ivanov.s.o.1986@gmail.com, ivanov.s.o.1986@bimsa.cn}

\begin{document}

\begin{abstract} For a real $r\geq 0,$ we consider the notion of $r$-homotopy equivalence in the category quasimetric spaces, which includes metric spaces and directed graphs. We show that for a finite quasimetric space $X$ there is a unique (up to isometry) $r$-homotopy equivalent quasimetric space of the minimal possible cardinality. It is called the $r$-minimal model of $X$.  We use this to construct a decomposition of the magnitude-path spectral sequence of a digraph into a direct sum of spectral sequences with certain properties. 

We also construct an $r$-homotopy invariant ${\rm SH}^r_{n,I}(X)$ of a quasimetric space $X,$ called spectral homology, that generalizes many other invariants: the pages of the magnitude-path spectral sequence, including path homology, magnitude homology, blurred magnitude homology and reachability homology.  
\end{abstract}

\maketitle

\section{Introduction}
\subsection{Background}

For over a decade, two theories have been actively developed. The first theory is the theory of magnitude of metric spaces and their magnitude homology. This theory was developed by Leinster, Meckes, Shulman, Hepworth, Willerton, Yoshinaga, Asao and others 
\cite{leinster2013magnitude,meckes2013positive,leinster2019magnitude,hepworth2015categorifying,leinster2021magnitude,kaneta2021magnitude,asao2021geometric}. It explores an invariant of a metric space called magnitude. On one hand, the magnitude is a real number, which is a continuous analogue of the cardinality of a finite set. On the other hand, it exhibits some properties similar to those of the Euler characteristic. The concept of magnitude is also related to the notion of diversity \cite{leinster2016maximizing}, \cite{leinster2021entropy}. Subsequently it was discovered that it is possible to introduce a homology theory for metric spaces, so that for finite metric spaces the Betti numbers of this theory are related to the  magnitude in the similar way as ordinary Betti numbers are related to the Euler characteristic. This homology theory is called magnitude homology. 

The second theory is the GLMY-theory of digraphs. This theory was developed by Grigor'yan, Lin, Muranov, Yau and others  \cite{grigor2012homologies,grigor2014homotopy, grigor2017homologies, grigor2020path, grigor2014graphs, grigor2018path,grigor2018path2,grigor2022advances,ivanov2022simplicial}. It implements a homotopical approach to the theory of digraphs. Its central concept is the path homology of a digraph, also known as GLMY-homology. The path homology behaves in many ways similar to the homology of spaces. For example, there is an analogue of the K\"unneth formula; there is a notion of suspension of a digraph that shifts the path homology; there is a notion of the fundamental group of a digraph, whose abelianization equals the first path homology group, and so forth. The theory also has the notion of homotopy of morphisms in the category of digraphs, and path homology is homotopy invariant. Moreover, there is a theory of covering digraphs, analogous to the theory of covering spaces in algebraic topology \cite{di2023path}.

Asao later showed that for the case of digraphs, these two theories could be unified into a single theory \cite{asao2023magnitude},\cite{asao2023magnitude2}. He showed that each digraph $G$ defines a spectral sequence $E(G)$, whose first page is the magnitude homology and the second page contains the path homology as the zeroth row. This spectral sequence is called magnitude-path spectral sequence. He also defines the concept of $r$-homotopy on the category of digraphs and shows that $E^{r+1}(G)$ is an $r$-homotopy invariant. For $r=1$, this concept reduces to homotopy as defined in the GLMY-theory. Thus, this result generalizes the fact that the path homology is a homotopy invariant.

For a digraph $G$, one can define the distance between vertices $d_G(g,g')$ as the length of the shortest path from $g$ to $g'$. If there is no path from $g$ to $g'$, we assume $d_G(g,g')=+\infty.$ Note that this distance is not necessarily symmetric and can be equal to infinity. However, it satisfies the triangle inequality and $d_G(g,g')=0$ if and only if $g=g'.$ Such generalizations of metric spaces are referred to as quasimetric spaces. We will work in the category of quasimetric spaces where morphisms are short maps.

The aim of our work is to study the concept of $r$-homotopy, introduced by Asao, for the case of quasimetric spaces and any real $r\geq 0.$ We study this concept from two perspectives. First, we study finite quasimetric spaces that have the least cardinality among quasimetric spaces in a given $r$-homotopy equivalence class. Secondly, we introduce a new invariant of the $r$-homotopy equivalence for any quasimetric spaces, and study this invariant. Both parts are related to the magnitude-path spectral sequence: in the first part we obtain its decomposition into a direct sum of some smaller spectral sequences; in the second part we generalize its pages to the case of any quasimetric spaces. 

\subsection{Nested \texorpdfstring{$r$}{}-minimal models} In this theory, after the $r$-homotopy equivalence of short maps is defined,
 $r$-homotopy equivalences and $r$-deformation retracts are defined similarly to how it is defined in the classical algebraic topology. We say that a finite quasimetric space $M$ is $r$-minimal, if it has no proper $r$-deformation retracts. A quasimetric space $M$ is called the $r$-minimal model of a quasimetric space $X,$ if $M$ is $r$-minimal and it is $r$-homotopy equivalent to $X.$ We prove the following theorem.
\begin{theorem*} Let $r\geq 0$ be a real number. Then the following holds.
\begin{enumerate}
    \item Any finite quasimetric space has an $r$-minimal model, which is unique up to isometry. 
    \item Two finite quasimetric spaces are $r$-homotopy equivalent if and only if their $r$-minimal models are isometric. 
    \item Any finite quasimetric space has an $r$-deformation retract, which is its $r$-minimal model. 
\end{enumerate}
\end{theorem*}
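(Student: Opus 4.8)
The plan is to mimic Stong's treatment of minimal finite topological spaces (and the theory of cores of graphs), transported to quasimetric spaces and Asao-style $r$-homotopy, using throughout the formal properties I expect to have been set up earlier: that $r$-homotopy of short maps is an equivalence relation compatible with composition, that a retraction which is $r$-homotopic to the identity exhibits its image as an $r$-deformation retract, and that ``is an $r$-deformation retract of'' is transitive. Granting these, I would prove existence and (3) by descent on cardinality: given a finite quasimetric space $X$, if it is not $r$-minimal it has a proper $r$-deformation retract $X_1\subsetneq X$ with $|X_1|<|X|$; iterating, the strictly decreasing sequence of non-negative integers $|X|>|X_1|>\cdots$ must terminate at a space $M$ with no proper $r$-deformation retract, i.e.\ an $r$-minimal one. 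By transitivity, $M$ is an $r$-deformation retract of $X$, hence $r$-homotopy equivalent to $X$, hence an $r$-minimal model. This yields the existence clause of (1) and all of (3) simultaneously.

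\emph{Key Lemma.} The crux is to show that if $M$ is finite and $r$-minimal and $f\colon M\to M$ is a short map with $f\simeq_r\mathrm{id}_M$, then $f$ is an isometry. Here I would exploit finiteness: the images $f(M)\supseteq f^2(M)\supseteq\cdots$ have non-increasing cardinality, so $f^N(M)=f^{N+1}(M)=:A$ for some $N\geq 1$; then $f$ maps $A$ onto $A$, hence permutes the finite set $A$ with some order $k$, and $g:=f^{Nk}$ restricts to the identity on $A$, so $g$ is a retraction of $M$ onto $A$. Since $r$-homotopy is a congruence, $f\simeq_r\mathrm{id}_M$ forces $g\simeq_r\mathrm{id}_M$, so $A$ is an $r$-deformation retract of $M$; $r$-minimality gives $A=M$, whence $g=\mathrm{id}_M$. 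Then $f$ is a bijection with inverse $f^{Nk-1}$, which is again short, and a bijective short map with short inverse is an isometry.

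\emph{Uniqueness and (2).} These now follow formally from the Key Lemma. If $M_1,M_2$ are $r$-minimal and $r$-homotopy equivalent via short maps $\phi\colon M_1\to M_2$ and $\psi\colon M_2\to M_1$ with $\psi\phi\simeq_r\mathrm{id}_{M_1}$ and $\phi\psi\simeq_r\mathrm{id}_{M_2}$, then both composites are isometries, so $\phi$ is injective and surjective, hence bijective, with short inverse $(\psi\phi)^{-1}\psi$; thus $\phi$ is an isometry. Applying this to any two $r$-minimal models of a single finite quasimetric space $X$ proves uniqueness up to isometry in (1). For (2): if $X_1\simeq_r X_2$, then an $r$-minimal model of $X_1$ is also an $r$-minimal model of $X_2$, so uniqueness makes the $r$-minimal models of $X_1$ and $X_2$ isometric; the converse is immediate since $X_i$ is $r$-homotopy equivalent to its $r$-minimal model.

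\emph{Main obstacle.} The combinatorics above is light; the real effort lies in the groundwork assumed in the first paragraph, and in particular in pinning down which notion of $r$-deformation retract is in force. The image-stabilisation trick naturally produces only the ``weak'' version (a retraction $\iota\rho$ with $\iota\rho\simeq_r\mathrm{id}$), so either the definitions must be arranged so that this suffices, or one needs a separate finite argument to upgrade it to a strong/relative $r$-deformation retract. I expect that calibration, together with verifying that Asao's $r$-homotopy on short maps of quasimetric spaces genuinely has the congruence and transitivity properties used, to be where the work actually concentrates, rather than in the three steps sketched above.
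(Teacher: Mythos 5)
Your proposal is correct and follows essentially the same route as the paper: your Key Lemma is the paper's Proposition \ref{prop:minimal-equi} (proved via the same image-stabilisation/idempotent-power trick of Proposition \ref{prop:endomorphism}), existence comes from the same descent on cardinality using transitivity of $r$-deformation retracts, and uniqueness is derived formally exactly as in Proposition \ref{prop:equivalence_of_minimal_models}. The calibration worry you raise does not arise, since the paper's definition of $r$-deformation retract is precisely the ``weak'' one ($\rho\iota=1_A$ and $\iota\rho\sim_r 1_X$) that your argument produces, and the congruence property is the paper's Lemma \ref{lemma:congruence}.
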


The $r$-minimal model of a space $X$ is denoted by $M_r(X).$ Note that we do not claim that $X\mapsto M_r(X)$ is a functor, and the inclusion $M_r(X) \hookrightarrow X$ and the retraction $X\to M_r(X)$ are not uniquely defined. It is easy to see that if $r'>r,$ then $M_{r'}(M_r(X))\cong M_{r'}(X).$ Using this we obtain that there is a finite number of real numbers $0<r_1<\dots < r_n$ such that $M_{r_i}(X)\not\cong M_r(X)$ for $r<r_i.$ We call these points homotopy jumping points and set
\begin{equation}
{\rm hj}(X)=\{r_1,\dots,r_n\}.
\end{equation}
Thus for any finite quasi-metric space $X$ we obtain a nested sequence of retracts 
\begin{equation}\label{eq:nested}
X \supset M_{r_1}(X) \supset M_{r_2}(X) \supset \dots \supset M_{r_n}(X).
\end{equation} 
Unfortunately, the map $X\mapsto {\rm hj}(X)$ from the set of finite subsets of $\RR^n$ to the set of finite subsets of $\RR$ is not continuous with respect to the Hausdorff distance (Example \ref{ex:discontinuity}).

If $X=G$ is a digraph, the sequence \eqref{eq:nested} is a sequence of convex subdigraphs, such that $M_{r_{i+1}}(G)$ is a retract of $M_{r_i}(G).$ Using this we obtain a decomposition of the magnitude-path spectral sequence $E(G)$ into a direct sum of spectral sequences
\begin{equation}\label{eq:decomp_spectral}
E(G) = \bigoplus_{k=1}^n E(G)_{[k]} \oplus E(G)_{[\infty]}
\end{equation}
such that $E^{r_k+1}(G)_{[k]}=0$ and for any $1\leq j\leq n$ we have 
\begin{equation}
E(M_{r_j}(G)) \cong \bigoplus_{k=j+1}^n E(G)_{[k]} \oplus E(G)_{[\infty]}. 
\end{equation}
However, we do not claim that this decomposition is functorial by $G$. 

We say that a finite digraph $G$ is homotopy stable, if $M_r(G)=G$ for any $r\geq 0.$ Any finite digraph $G$ defines a homotopy stable digraph by the formula $M_{\sf stab}(G)=M_r(G),$ where $r$ is greater then all homotopy jumping points of $G$. The decomposition \eqref{eq:decomp_spectral} shows that the magnitude-path spectral sequence $E(G)$ contains a summand corresponding to the homotopy stable digraph $E(M_{\sf stab}(G))$ such that $E^r(M_{\sf stab}(G))=E^r(G)$ for all big enough $r.$ This indicates that homotopy stable digraphs may be an important subject for further study.

\subsection{Spectral homology}

The magnitude-path spectral sequence is defined  for the discrete case of digraphs and its first page is the magnitude homology. However the magnitude homology is defined for all quasimetric spaces. Spectral homology is an attempt to define the magnitude-path spectral sequence for any quasimetric space so that its ``first page'' is the magnitude homology. In some sense the spectral homology is a ``continuous'' generalization of the magnitude-path spectral sequence. In order to define the spectral homology of a quasimetric space, we need to define the spectral homology of an $\RR$-filtered chain complex so that the spectral homology is a generalization of pages of the spectral sequence associated with a $\ZZ$-filtered chain complex. In other words, the spectral homology of an $\RR$-filtered complex is an analogue of the spectral sequence of a $\ZZ$-filtered chain complex.

For an $\RR$-filtered chain complex $C$ over a ring $\KK,$ a real number $r\geq 0,$ a non-empty interval $I\subseteq \RR$ and a natural number $n$ we define a $\KK$-module $\SH^r_{n,I}(C),$ called spectral homology, or $r$-spectral homology, if we want to specify $r.$ The spectral homology generalizes the spectral sequence associated with a $\ZZ$-filtered chain complex in the following sense. If $C$ is a $\ZZ$-filtered chain complex, then its $\ZZ$-filtration can be extended to an $\RR$-filtration by the formula $F_sC = F_{\lfloor s\rfloor}C$ for $s\in \RR.$ In this case for a natural $r\geq 1$ the pages of the associated spectral sequence can be presented as the spectral homology $E^r_{n,\ell-n}(C)=\SH^{r-1}_{n,\{\ell\}}(C).$ 

Let us give the definition of $\SH^r_{n,I}(C).$ A left infinite interval in $\RR$ is an interval of the form $(-\infty,a],(-\infty,a), \RR, \emptyset.$ For a left infinite interval $L$  we set $F_L=\bigcup_{s\in L} F_sC.$ If we present a non-empty interval $I$ as the difference of left infinite intervals $I=R\setminus L,$  then 
\begin{equation}
\SH^r_{n,I}(C) = \Im( H_n( F_{R}/F_{L-r} ) \longrightarrow H_n( F_{R+r}/F_L ) ).
\end{equation}
Moreover, there is another more technical equivalent definition of the spectral homology that shows that for $s\geq r\geq 0$ the module $\SH^{s}_{n,I}(C)$ is a subquotient of $\SH^{r}_{n,I}(C)$
(see details in Subsection \ref{subsection:spectral_homology}).

The magnitude-path spectral sequence of a digraph $G$ is defined using a $\ZZ$-filtered chain complex $\RC(G),$ which is called reachability chain complex (see \cite{hepworth2023reachability}). This complex is defined similarly for the case of quasimetric space $X.$ However, in this case we obtain an $\RR$-filtration on the chain complex $\RC(X).$ Then the spectral homology of $X$ is defined as the spectral homology of the $\RR$-filtered chain complex
\begin{equation}
\SH^r_{n,I}(X) = \SH^r_{n,I}(\RC(X)).
\end{equation}
We show that $\SH^r_{n,I}(X)$ is an $r$-homotopy invariant. 

The spectral homology is natural by the interval $I$ in some sense. In particular, there is a homomorphism $\SH^r_{n,[0,\ell]}(X) \to \SH^r_{n,[0,\ell']}(X)$ for $\ell<\ell'.$ This allows us to define a persistent module ${\rm PSH}^r_n(X)$ such that ${\rm PSH}^r_n(X)(\ell)=\SH^r_{n,[0,\ell]}(X).$ 

We prove that some other types of homology can be presented as spectral homology.

\begin{theorem*} Let $X$ be a quasimetric space and $G$ be a digraph. Then the following holds.
\begin{enumerate}
\item The magnitude homology can be presented as $0$-spectral homology
$
{\rm MH}_{n,\ell}(X)\cong \SH^0_{n,\{\ell\}}(X).$
\item The persistent module of  blurred magnitude homology (see \cite{govc2021persistent}) can be presented as the persistent $0$-spectral homology ${\rm BMH}_{n}(X)\cong {\rm PSH}^0_{n}(X).$
\item The reachability homology (see \cite{hepworth2023reachability}) can be presented as $r$-spectral homology  
${\rm RH}_n(X)\cong \SH^r_{n,\RR}(X)$ for any $r\geq 0.$
\item For any integer $r\geq 1$ the $r$-th page of the magnitude-path spectral sequence of $G$ can be presented as $(r-1)$-spectral homology 
$E^r_{n,\ell-n}(G)\cong \SH^{r-1}_{n,\{\ell\}}(G).$
\item In particular, the path homology can be described as
$
{\rm PH}_n(G)=\SH^1_{n,\{n\}}(G).$
\end{enumerate}
\end{theorem*}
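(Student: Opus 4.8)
The plan is to realise each of the four input theories as an invariant of the reachability chain complex and then to identify it with the appropriate spectral homology group by evaluating the intervals $I_r$ and $I^r$. Throughout I write $C=\RC(X)$ (or $C=\RC(G)$) with its canonical length filtration $F$, which is an $\RR$-filtration in general, a $\ZZ$-filtration when the space is a digraph, and exhaustive because $\RC$ is generated by reachable tuples and these have finite length. Parts (1)--(3) will then be definition chases.

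For (3): the interval $I=\RR=\RR\setminus\emptyset$ has $L=\emptyset$, so $I_r=I^r=\RR$ for every $r\geq 0$, and $C_\RR=F_\RR C/F_\emptyset C=C$, whence $\SH^r_{n,\RR}(C)=H_n(C)={\rm RH}_n(X)$. For (1): specialising to $r=0$ gives $I_0=I^0=I$ and hence $\SH^0_{n,I}(C)=H_n(C_I)$; for $I=\{\ell\}=(-\infty,\ell]\setminus(-\infty,\ell)$ this is $H_n\bigl(F_{(-\infty,\ell]}C/F_{(-\infty,\ell)}C\bigr)$, the homology of the degree-$n$ part of the associated graded of the length filtration, whose differential retains exactly the length-preserving faces, i.e.\ the magnitude chain complex of $X$ in bidegree $(n,\ell)$, so $\SH^0_{n,\{\ell\}}(X)\cong\MH_{n,\ell}(X)$. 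For (2): again with $r=0$ but $I=(-\infty,\ell]=(-\infty,\ell]\setminus\emptyset$, one gets $C_{(-\infty,\ell]}=F_{(-\infty,\ell]}C$ and so $\SH^0_{n,(-\infty,\ell]}(C)=H_n(F_{(-\infty,\ell]}C)$, the homology of the length-$\leq\ell$ subcomplex with the full differential, which is blurred magnitude homology at scale $\ell$; and since the structure maps of ${\rm PSH}^0_n(C)$ arise from $(-\infty,\ell]\preccurlyeq(-\infty,\ell')$ for $\ell\leq\ell'$ and are induced by the inclusions $F_{(-\infty,\ell]}C\hookrightarrow F_{(-\infty,\ell']}C$, they coincide with those of ${\rm BMH}_n(X)$, giving ${\rm PSH}^0_n(X)\cong{\rm BMH}_n(X)$. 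The ``in particular'' clauses in (1) and (2) then follow from Proposition~\ref{prop:second_descr_SH}, which presents $\SH^r_{n,I}(C)$ as a subquotient of $H_n(C_I)$, together with the $r$-homotopy invariance of $\SH$ proved earlier.

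The heart of the theorem is (4). Here the plan is to invoke the classical description of the pages of the spectral sequence of a filtered complex as images of maps between homologies of two-step subquotients: indexing the length filtration on $C=\RC(G)$ so that the filtration degree is the length $\ell$ and the total degree is the homological degree $n$, one has
\[
E^r_{n,\ell-n}(G)\;\cong\;\Im\Bigl(H_n\bigl(F_\ell C/F_{\ell-r}C\bigr)\longrightarrow H_n\bigl(F_{\ell+r-1}C/F_{\ell-1}C\bigr)\Bigr),
\]
the arrow being induced by $F_\ell C/F_{\ell-r}C\hookrightarrow F_{\ell+r-1}C/F_{\ell-r}C\twoheadrightarrow F_{\ell+r-1}C/F_{\ell-1}C$. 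On the other side, for $I=\{\ell\}$ and shift parameter $r-1$ one computes $I_{r-1}=[\ell-r+1,\ell]$ and $I^{r-1}=[\ell,\ell+r-1]$; since $F$ is a $\ZZ$-filtration, $F_{(-\infty,\ell-r+1)}C=F_{\ell-r}C$ and $F_{(-\infty,\ell)}C=F_{\ell-1}C$, so $C_{I_{r-1}}=F_\ell C/F_{\ell-r}C$ and $C_{I^{r-1}}=F_{\ell+r-1}C/F_{\ell-1}C$, and the morphism $C_{I_{r-1}}\to C_{I^{r-1}}$ induced by $I_{r-1}\preccurlyeq I^{r-1}$ is exactly the composite above. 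Hence $\SH^{r-1}_{n,\{\ell\}}(G)=\Im\bigl(H_n(C_{I_{r-1}})\to H_n(C_{I^{r-1}})\bigr)\cong E^r_{n,\ell-n}(G)$. Finally (5) will follow by taking $r=2$ and $\ell=n$ in (4) and combining with Asao's identification of the zeroth row of the second page of the magnitude-path spectral sequence with path homology, $E^2_{n,0}(G)\cong{\rm PH}_n(G)$.

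I expect the main obstacle to be the bookkeeping in (4): pinning down the index convention of the magnitude-path spectral sequence so that it matches the convention in which the image formula for $E^r$ is stated, translating carefully between the $\RR$-filtered language of $\SH$ and the $\ZZ$-filtered spectral sequence (in particular the identity $F_{(-\infty,a)}C=F_{a-1}C$ for integral $a$, which is what makes the two-step quotients line up), and --- if the image formula is not already available in the body --- establishing it, for instance by comparing both sides with the standard subquotient presentation $Z^r/(Z^{r-1}+B^{r-1})$ of the pages or by appealing directly to Proposition~\ref{prop:second_descr_SH}.
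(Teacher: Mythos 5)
Your proposal is correct and follows essentially the same route as the paper: the paper's proof simply cites equation \eqref{eq:SH^0} and Remark \ref{rem:nat_int} for (1)--(2), Proposition \ref{prop:SH_I=R} for (3), and Proposition \ref{prop:spectral=spectral} for (4), and your definition chases (including the key identifications $F_{(-\infty,\ell)}C=F_{\ell-1}C$ and $C_{I_{r-1}}=F_\ell C/F_{\ell-r}C$, $C_{I^{r-1}}=F_{\ell+r-1}C/F_{\ell-1}C$) are exactly the content of those propositions. The only cosmetic difference is that for (4) you match the two image formulas directly, whereas the paper matches the underlying short exact sequences \eqref{eq:ses_of_comp1}--\eqref{eq:ses_of_comp2} with \eqref{eq:ses_of_comp1'}--\eqref{eq:ses_of_comp2'}; these are equivalent by Lemma \ref{lemma:CE}.
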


In the end of the paper we give a description of some low-dimensional spectral homology. In particular, for $\ell>r\geq 0,$ we give a description of $\SH^r_{1,\{\ell\}}(X)$ in terms of adjacent pairs, similar to the description of $\MH_{1,\ell}(X)$ given in \cite[Cor. 4.5]{leinster2021magnitude}. 
We also work out the following continuous  example. Let $X \subseteq S^1$ be an arc in the circle of radius $1$ (with the Euclidean distance) that does not lie in one semicircle. 
\[
\begin{tikzpicture}
\draw (0,0) arc (115:425:1);
\filldraw  (0,0) circle (1pt) node[anchor=south]{$x$};
\filldraw  (0.83,0) circle (1pt)
node[anchor=south]{$y$};
\end{tikzpicture}
\]
If we assume that $d(x,y)<\sqrt{2}/2$ and $0<r<\ell,$ then 
\begin{equation}
\SH^r_{1,\{\ell\}}(X) \cong 
\begin{cases}
 \ZZ^2,& \ell=d(x,y),   \\
 0, & \ell\ne d(x,y).
\end{cases}
\end{equation} 
So the first spectral homology detects the distance $d(x,y)$ for the ``singular pair'' $(x,y),$ and shows that the arc is not $r$-contractible for $r<d(x,y).$ Moreover, in this case $\SH^r_{1,\{\ell\}}(X)$ has finite rank, while $\MH_{1,\ell}(X)$ has infinite rank.

\subsection{Acknowledgments} The author is grateful to Lev Mukoseev for fruitful discussions and Example \ref{example:Lev}.

\tableofcontents

\section{\texorpdfstring{$r$}{}-minimal models of quasimetric spaces}

\subsection{Quasimetric spaces} 
A \emph{quasimetric space} is a set $X$ equipped by a map $d:X\times X\to [0,+\infty]$ such that the triangle inequality holds $d(x,z)\leq d(x,y)+d(y,z)$ for any $x,y,z\in X$ and $d(x,y)=0$ if and only if $x=y.$ Note that we allow the distance to be equal to infinity, and it is not necessary symmetric, but the distance between distinct points is positive.  Two sources of examples of quasimetric spaces are metric spaces and directed graphs with the shortest path distance. 

A map between two quasimetric spaces $\varphi:X\to Y$ is called short (or 1-Lipschitz, or distance-decreasing), if
\begin{equation}
d(\varphi(x),\varphi(x'))\leq d(x,x')    
\end{equation}
for any $x,x'\in X.$ The set of all short maps from $X$ to $Y$ is denoted by ${\rm SMap}(X,Y).$ The category of quasimetric spaces with short maps as morphisms is denoted by ${\sf QMet}.$ Note that isomorphisms in this category are isometries (distance preserving bijections). 

For two short maps $\varphi,\psi: X\to Y$ we set 
\begin{equation}
d(\varphi,\psi)  = \underset{x}{\rm sup} \ d(\varphi(x),\psi(x)).
\end{equation}
It is easy to see that this defines a structure of quasimetric space on ${\rm SMap}(X,Y).$ Note that for any short maps $\beta:Y\to Z$ and $\alpha:W\to X$ we have 
\begin{equation}\label{eq:dist_comp}
d(\beta \varphi \alpha,\beta \psi \alpha) \leq d(\varphi,\psi).
\end{equation}

\subsection{\texorpdfstring{$r$}{}-homotopy category}

Let $r\geq 0$ be a real number. We say that two points $x,x'$ of a quasimetric space $X$  are $r$-homotopic, denoted by $x\sim_r x',$ if there is a sequence $x=x_0,\dots,x_n=x'$ of points of $X$ such that for each $0\leq i\leq n-1$ we have either $d(x_i,x_{i+1})\leq r$ or $d(x_{i+1},x_i)\leq r.$ In other words, $\sim_r$ is the minimal equivalence relation such that $d(x,x')\leq r$ implies $x\sim_r x'.$ We say that two short maps $\varphi,\psi:X\to Y$ are $r$-homotopic if they are $r$-homotopic as points in ${\rm SMap}(X,Y).$ 

\begin{lemma}\label{lemma:congruence} If $\alpha:W\to X,$  $\varphi,\psi:X\to Y$ and $\beta:Y\to Z$ are short maps between quasimetric spaces and $\varphi\sim_r \psi,$ then $\beta\varphi \alpha\sim_r \beta\psi\alpha.$  In other words, $\sim_r$ is a congruence relation on the category ${\sf QMet}.$
\end{lemma}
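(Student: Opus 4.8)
The plan is to unwind the definition of $r$-homotopy for maps: $\varphi \sim_r \psi$ means $\varphi$ and $\psi$ are connected by a finite chain in $\mathrm{SMap}(X,Y)$ in which consecutive maps are at distance $\leq r$ in one direction or the other. So I would first fix such a chain $\varphi = \varphi_0, \varphi_1, \dots, \varphi_n = \psi$ with $d(\varphi_i,\varphi_{i+1}) \leq r$ or $d(\varphi_{i+1},\varphi_i) \leq r$ for each $i$. Then I would apply the pre- and post-composition operations to the whole chain, obtaining the candidate chain $\beta\varphi_0\alpha, \beta\varphi_1\alpha, \dots, \beta\varphi_n\alpha$ in $\mathrm{SMap}(W,Z)$, and check that consecutive terms are still within $r$.

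The key step is the inequality \eqref{eq:dist_comp}, which says $d(\beta\varphi\alpha, \beta\psi\alpha) \leq d(\varphi,\psi)$ for any short $\alpha, \beta$. Applying this to each adjacent pair $\varphi_i, \varphi_{i+1}$: if $d(\varphi_i, \varphi_{i+1}) \leq r$ then $d(\beta\varphi_i\alpha, \beta\varphi_{i+1}\alpha) \leq d(\varphi_i,\varphi_{i+1}) \leq r$, and symmetrically if the inequality goes the other way. Hence $\beta\varphi_0\alpha, \dots, \beta\varphi_n\alpha$ witnesses $\beta\varphi\alpha \sim_r \beta\psi\alpha$. That essentially completes the argument; the congruence statement then follows because $\sim_r$ being an equivalence relation on each hom-set is already noted in the text, and compatibility with composition on both sides is exactly what we just proved (taking $\alpha$ or $\beta$ to be an identity recovers the one-sided versions).

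I do not anticipate a genuine obstacle here — the only thing to be careful about is that \eqref{eq:dist_comp} as stated in the excerpt is phrased for a single pair of maps, so one must make sure it is legitimately applied pair-by-pair along the chain rather than trying to bound $d(\beta\varphi\alpha, \beta\psi\alpha)$ directly (which need not be $\leq r$ even though each consecutive step is). It is also worth observing that $\mathrm{sup}$ behaves well: $d(\beta\varphi_i\alpha, \beta\varphi_{i+1}\alpha) = \sup_{w} d(\beta\varphi_i\alpha(w), \beta\varphi_{i+1}\alpha(w))$, and each term is bounded using that $\beta$ is short and $d(\varphi_i\alpha(w), \varphi_{i+1}\alpha(w)) \leq d(\varphi_i,\varphi_{i+1})$ since $\varphi_i\alpha(w)$ and $\varphi_{i+1}\alpha(w)$ are values of $\varphi_i,\varphi_{i+1}$ at the point $\alpha(w)$. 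This is exactly the content of \eqref{eq:dist_comp}, so in the write-up I would simply cite it. The final sentence of the proof would record that, since pre- and post-composition are the two generating operations of the category acting on hom-sets, preserving $\sim_r$ under both means $\sim_r$ is a congruence.
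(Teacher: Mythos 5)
Your argument is correct and is exactly the paper's proof: the paper simply says the lemma ``follows from \eqref{eq:dist_comp},'' and your write-up spells out the intended application of that inequality term-by-term along the chain witnessing $\varphi\sim_r\psi$. No difference in approach, and your cautionary remark about applying the bound pairwise rather than to $d(\beta\varphi\alpha,\beta\psi\alpha)$ directly is well taken.
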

\begin{proof}
Follows from \eqref{eq:dist_comp}.
\end{proof}

Since $\sim_r$ is a congruence relation on ${\sf QMet},$ we can define the $r$-homotopy category of quasimetric spaces as the quotient-category ${\sf QMet}/\sim_r$. We say that a short map is an $r$-homotopy equivalence, if its image in ${\sf QMet}/\sim_r$ is an isomorphism, and two quasimetric spaces are $r$-homotopy equivalent, if they are isomorphic in ${\sf QMet}/\sim_r.$

For a quasimetric space $X$ we denote by ${\rm End}(X)$ the monoid of endomorphisms of $X$ in ${\sf QMet}.$ We will also denote by ${\rm End}_r(X)$ the submonoid consisting of short maps $\varphi :X\to X$ such that $\varphi \sim_r 1_X.$ 
\begin{equation}
{\rm End}_r(X) = \Ker({\rm End}(X) \to {\rm End}_{{\sf QMet}/\sim_r}(X)).
\end{equation}

\subsection{\texorpdfstring{$r$}{}-deformation retracts}

If $X$ is a  quasimetric space and $A\subseteq X$ is a subset, $A$ will be treated as a  quasimetric space with the induced metric. We will denote by $\iota=\iota_{A,X}:A\to X$ the canonical embedding. A \emph{retract} of a quasimetric space $X$ is a subset $A\subseteq X$ such that there exists a short map $\rho:X\to A$, called retraction, such that  $\rho\iota=1_A$. We say that the retract is \emph{proper}, if $A\ne X.$ An \emph{$r$-deformation retract} is a retract $A\subseteq X$ with a retraction $\rho$ such that $\iota \rho \sim_r 1_X.$

\begin{proposition}\label{prop:endomorphism}
Let $X$ be a finite quasimetric space and  $\varphi\in {\rm End}_r(X)$. Then there exists a natural $n\geq 1$ such that $\varphi^n$ is idempotent, and for any such $n$ the subset $\varphi^n(X)\subseteq X$ is an $r$-deformation retract of $X.$
\end{proposition}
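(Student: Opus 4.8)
The plan is to split the statement into two independent parts: (i) the purely monoid-theoretic fact that in a finite monoid every element has an idempotent power, applied to $\varphi\in{\rm End}(X)\subseteq\mathrm{Maps}(X,X)$ with $X$ finite; and (ii) the observation that for \emph{any} idempotent $e=\varphi^n$ the subset $e(X)$ is an $r$-deformation retract, which needs nothing beyond $e$ being idempotent and $r$-homotopic to $1_X$. Part (ii) is where the hypothesis $\varphi\in{\rm End}_r(X)$ enters, and it is the only place the congruence property (Lemma~\ref{lemma:congruence}) is used.

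First I would prove existence of the exponent $n$. Since $X$ is finite, ${\rm End}(X)$ is a finite monoid, so the powers $\varphi,\varphi^2,\varphi^3,\dots$ cannot all be distinct: there are $i<j$ with $\varphi^i=\varphi^j$. Setting $m=j-i$, one gets $\varphi^{k+m}=\varphi^k$ for all $k\geq i$, hence $\varphi^{k+tm}=\varphi^k$ for all $k\geq i$ and $t\geq 0$. Choosing $n$ to be any multiple of $m$ with $n\geq i$ yields $\varphi^{2n}=\varphi^n$, i.e.\ $\varphi^n$ is idempotent. (Note the proposition also asserts the conclusion for \emph{every} such $n$; this will be automatic, since the retract argument below uses only idempotency of $\varphi^n$.)

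Now fix any $n\geq1$ with $e:=\varphi^n$ idempotent, and put $A=e(X)\subseteq X$. Define $\rho\colon X\to A$ to be $e$ with its codomain restricted to $A$; this is short because $e$ is short. Let $\iota=\iota_{A,X}\colon A\to X$ be the inclusion. For $a\in A$, write $a=e(x)$; then $\rho\iota(a)=e(a)=e(e(x))=e^2(x)=e(x)=a$, so $\rho\iota=1_A$ and $A$ is a retract of $X$ with retraction $\rho$. Regarded as an endomorphism of $X$, the composite $\iota\rho$ equals $e=\varphi^n$. Since ${\rm End}_r(X)$ is a submonoid of ${\rm End}(X)$ containing $\varphi$, it contains $\varphi^n$; equivalently, starting from $\varphi\sim_r 1_X$ and applying Lemma~\ref{lemma:congruence} inductively (precomposing with $\varphi$ and using transitivity) gives $\varphi^k\sim_r 1_X$ for all $k\geq1$, in particular $\iota\rho=\varphi^n\sim_r 1_X$. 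Therefore $A=\varphi^n(X)$ is an $r$-deformation retract of $X$.

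There is no serious obstacle here; the argument is routine once it is organized as above. The only point requiring a moment's care is the logical structure of the statement — the conclusion is claimed for every $n$ with $\varphi^n$ idempotent, not merely for the $n$ produced by the pigeonhole step — but the third paragraph already handles an arbitrary such $n$ and does not reuse the finiteness of $X$, so nothing further is needed.
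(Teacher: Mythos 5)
Your proof is correct and follows essentially the same route as the paper's: an idempotent power exists by finiteness, $\varphi^n\sim_r 1_X$ follows from Lemma~\ref{lemma:congruence} and transitivity, and $\varphi^n=\iota\rho$ with $\rho$ short for the induced metric gives the deformation retract. The only differences are cosmetic — you spell out the pigeonhole argument the paper leaves implicit, and you verify $\rho\iota=1_A$ by direct evaluation where the paper cancels $\iota\rho\iota\rho=\iota\rho$ using injectivity of $\iota$ and surjectivity of $\rho$.
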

\begin{proof} Since $X$ is finite, there exists such $n$ that $\varphi^n$ is an idempotent.  
Lemma \ref{lemma:congruence} implies that $1\sim_r \varphi \sim_r \varphi^2 \sim_r \dots \sim_r \varphi^n.$ Therefore $\varphi^n\sim_r 1_X.$ Set $A=\varphi^n(X).$ Then we can present $\varphi^n$ as the composition $\varphi^n= \iota \rho,$ where $ \iota :A\to X$ is the canonical embedding and $\rho:X\to A$ is some surjection. Since the quasimetric on $A$ is induced from $X,$ we obtain that $\rho$ is a short map. Note that $\iota \rho\sim_r 1_X.$ Since $ \varphi^n$ is idempotent, we have $\iota \rho\iota \rho=\iota \rho.$ Using that $\iota$ is injective, and $\rho$ is surjective, we obtain $\rho\iota =1_A.$
\end{proof}

\subsection{\texorpdfstring{$r$}{}-minimal spaces} We say that a finite quasimetric space $M$ is  $r$-minimal, if it has no proper $r$-deformation retracts.

\begin{proposition}\label{prop:minimal-equi}
Let $X$ be a finite quasimetric space. Then the following statements are equivalent.
\begin{enumerate}
\item $X$ is $r$-minimal;
\item ${\rm End}_r(X)\subseteq {\rm Aut}(X)$;
\item any $\varphi\in {\rm End}(X)$ such that either $d(1_X,\varphi)\leq r$ or $d(\varphi,1_X)\leq r$ is an automorphism. 
\end{enumerate}  
\end{proposition}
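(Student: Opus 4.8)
The plan is to establish the cycle of implications $(1)\Rightarrow(2)\Rightarrow(3)\Rightarrow(1)$, using Proposition~\ref{prop:endomorphism} for the first arrow and the composition estimate~\eqref{eq:dist_comp} for the last.

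For $(1)\Rightarrow(2)$ I would take an arbitrary $\varphi\in{\rm End}_r(X)$ and apply Proposition~\ref{prop:endomorphism} to obtain $n\geq 1$ such that $\varphi^n$ is idempotent and $\varphi^n(X)\subseteq X$ is an $r$-deformation retract of $X$. Since $X$ is $r$-minimal, $\varphi^n(X)=X$; a surjective self-map of a finite set is a bijection, and a bijective idempotent is the identity, so $\varphi^n=1_X$. Then $\varphi^{n-1}$ is a short two-sided inverse of $\varphi$, so $\varphi$ is an isomorphism in ${\sf QMet}$, i.e.\ $\varphi\in{\rm Aut}(X)$. The implication $(2)\Rightarrow(3)$ is immediate: if $d(1_X,\varphi)\leq r$ or $d(\varphi,1_X)\leq r$, then the length-one sequence $1_X,\varphi$ already witnesses $\varphi\sim_r 1_X$ in ${\rm SMap}(X,X)$, hence $\varphi\in{\rm End}_r(X)\subseteq{\rm Aut}(X)$.

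For $(3)\Rightarrow(1)$ I would start from an arbitrary $r$-deformation retract $A\subseteq X$ with retraction $\rho$, so that $\iota\rho\sim_r 1_X$, and fix a sequence $1_X=\varphi_0,\dots,\varphi_m=\iota\rho$ in ${\rm SMap}(X,X)$ realising the $r$-homotopy. The key step is to prove by induction on $i$ that every $\varphi_i$ lies in ${\rm Aut}(X)$. The base case is clear. For the inductive step, assume $\varphi_i\in{\rm Aut}(X)$; since $d(\varphi_i,\varphi_{i+1})\leq r$ or $d(\varphi_{i+1},\varphi_i)\leq r$, composing on the left with the short map $\varphi_i^{-1}$ and using~\eqref{eq:dist_comp} yields $d(1_X,\varphi_i^{-1}\varphi_{i+1})\leq r$ or $d(\varphi_i^{-1}\varphi_{i+1},1_X)\leq r$. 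By hypothesis~(3), $\varphi_i^{-1}\varphi_{i+1}\in{\rm Aut}(X)$, hence $\varphi_{i+1}=\varphi_i\cdot(\varphi_i^{-1}\varphi_{i+1})\in{\rm Aut}(X)$. In particular $\iota\rho$ is a bijection of $X$; but its image is contained in $A$, which forces $A=X$, so $X$ has no proper $r$-deformation retract.

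I expect the induction in $(3)\Rightarrow(1)$ to be the only real obstacle. Hypothesis~(3) only constrains maps that are \emph{directly} $r$-close to $1_X$, whereas an $r$-deformation retract is a priori connected to $1_X$ only through a chain in ${\rm SMap}(X,X)$, so the condition must be transported one step at a time along the chain; this must also respect the asymmetry of the quasimetric, since both $d(\varphi_i,\varphi_{i+1})$ and $d(\varphi_{i+1},\varphi_i)$ may occur, and~\eqref{eq:dist_comp} is exactly what handles both cases uniformly. The remaining ingredients — that a surjective idempotent self-map of a finite set is the identity, and that automorphisms in ${\sf QMet}$ are precisely the short bijections with short inverse — are routine.
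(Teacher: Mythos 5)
Your proof is correct and follows essentially the same route as the paper: $(1)\Rightarrow(2)$ via Proposition~\ref{prop:endomorphism} and the idempotent-bijection argument, $(2)\Rightarrow(3)$ trivially, and $(3)\Rightarrow(1)$ by the same induction along the homotopy chain using~\eqref{eq:dist_comp}. The only cosmetic difference is that in the case $d(\varphi_{i+1},\varphi_i)\leq r$ you compose on the left to get $d(\varphi_i^{-1}\varphi_{i+1},1_X)\leq r$ whereas the paper composes on the right to get $d(\varphi_{i+1}\varphi_i^{-1},1_X)\leq r$; both work equally well.
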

\begin{proof}
$(1) \Rightarrow (2).$ Take $\varphi\in {\rm End}_r(X).$ Proposition \ref{prop:endomorphism} implies that there exists $n\geq 1$ such that $\varphi^n$ is an idempotent and $\varphi^n(X)\subseteq X$ is an $r$-deformation retract. Since $X$ is $r$-minimal, we obtain $\varphi^n(X)=X.$ Using that $X$ is finite, we obtain that $\varphi^n$ is a bijection and an isometry. 
Combining this with the fact that $\varphi$ an  idempotent, we obtain $\varphi^n=1_X.$ Therefore $\varphi$ is an automorphism.

$(2)\Rightarrow (3).$ Obvious.

$(3)\Rightarrow (1).$  Consider an $r$-deformation retract $A\subseteq X$ with a retraction $\rho:X\to A.$ We need to prove that $A=X.$ Since $\iota \rho\sim_r 1_X,$ there exists a sequence $1_X= \varphi_0,\dots, \varphi_n=\iota \rho$ such that for any $0\leq i\leq n-1$ either $d(\varphi_i,\varphi_{i+1})\leq r$ or $d(\varphi_{i+1},\varphi_i)\leq r.$ Prove by induction that $\varphi_i$ is an automorphism. The base of induction $i=0$ is obvious. Assume that $\varphi_i$ is an automorphism. If $d(\varphi_i,\varphi_{i+1})\leq r,$ then $d(1_X,\varphi_{i}^{-1}\varphi_{i+1})\leq r,$ and hence $\varphi_{i}^{-1}\varphi_{i+1}$ is an automorphism. If $d(\varphi_{i+1},\varphi_i)\leq r,$ then $d(\varphi_{i+1}\varphi^{-1}_i,1_X)\leq r,$ and hence, $\varphi_{i+1}\varphi^{-1}_i$ is an automorphism. It follows that $\varphi_{i+1}$ is an automorphism. Since $\iota\rho$ is an automorphism, $A=X.$ 
\end{proof}

\begin{proposition}\label{prop:equivalence_of_minimal_models}
Let $M$ and $M'$ be $r$-minimal finite quasimetric spaces. If $\varphi:M\to M'$ is an $r$-homotopy equivalence, then $\varphi$ is an isometry. 
\end{proposition}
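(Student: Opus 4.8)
The plan is to produce an $r$-homotopy inverse of $\varphi$ and feed the two composites into Proposition \ref{prop:minimal-equi}. Since $\varphi$ is an $r$-homotopy equivalence, there is a short map $\psi\colon M'\to M$ with $\psi\varphi\sim_r 1_M$ and $\varphi\psi\sim_r 1_{M'}$; in other words $\psi\varphi\in{\rm End}_r(M)$ and $\varphi\psi\in{\rm End}_r(M')$. Because $M$ and $M'$ are finite and $r$-minimal, the implication $(1)\Rightarrow(2)$ of Proposition \ref{prop:minimal-equi} yields $\psi\varphi\in{\rm Aut}(M)$ and $\varphi\psi\in{\rm Aut}(M')$. (Alternatively one can invoke Proposition \ref{prop:endomorphism} directly: a suitable power of $\psi\varphi$ is idempotent and cuts out an $r$-deformation retract of $M$, which must be all of $M$.)

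From this I would first deduce that $\varphi$ is a bijection: since $\psi\varphi$ is a bijection, $\varphi$ is injective, and since $\varphi\psi$ is a bijection, $\varphi$ is surjective (in particular $|M|=|M'|$). It then remains to show that $\varphi$ preserves distances. One inequality is automatic from shortness: $d(\varphi(x),\varphi(x'))\le d(x,x')$ for all $x,x'\in M$. For the reverse inequality, recall that isomorphisms in ${\sf QMet}$ are isometries, so $\psi\varphi$ is an isometry of $M$; combining this with shortness of $\psi$ gives
\[
d(x,x') = d\bigl(\psi\varphi(x),\psi\varphi(x')\bigr) \le d\bigl(\varphi(x),\varphi(x')\bigr).
\]
Hence $d(\varphi(x),\varphi(x'))=d(x,x')$ for all $x,x'\in M$, and together with bijectivity this shows $\varphi$ is an isometry.

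I do not anticipate a real obstacle here: the whole content is packaged in Proposition \ref{prop:minimal-equi}, and the only step deserving a moment of care is the "sandwich" above — noting that the automorphism $\psi\varphi$ is genuinely distance-preserving while $\psi$ is only distance-decreasing, so that the composite being an isometry forces $\varphi$ itself to be distance-preserving. A symmetric argument with the roles of $M,M'$ (and of $\varphi,\psi$) exchanged shows likewise that $\psi$ is an isometry, though this is not needed for the statement.
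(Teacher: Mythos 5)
Your proof is correct and follows the paper's argument: both invoke Proposition \ref{prop:minimal-equi} to conclude that $\psi\varphi$ and $\varphi\psi$ are automorphisms, from which $\varphi$ is an isomorphism in ${\sf QMet}$ and hence an isometry. The only difference is that you unwind the last step explicitly (bijectivity plus the sandwich inequality) where the paper simply cites the standard fact that a map with invertible composites on both sides is itself invertible.
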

\begin{proof}
Since $\varphi$ is an $r$-homotopy equivalence, there is a short map $\psi:M'\to M$ such that $\psi\varphi\sim_r 1_M$
and $\varphi \psi\sim_r 1_{M'}.$ By Proposition \ref{prop:minimal-equi} we obtain that $\psi\varphi$ and $\varphi \psi$ are automorphisms. Therefore $\varphi$ is an isomorphism. 
\end{proof}

\subsection{\texorpdfstring{$r$}{}-minimal model of a space} 

We say that $M$ is an $r$-minimal model of a finite quasimetric space $X,$ if $M$ is $r$-minimal, and $X$ is $r$-homotopy equivalent to $M.$ 

\begin{theorem}\label{th:minimal_models} Let $r\geq 0$ be a real number. Then the following holds.
\begin{enumerate}
    \item Any finite quasimetric space has an $r$-minimal model, which is unique up to isometry. 
    \item Two finite quasimetric spaces are $r$-homotopy equivalent if and only if their $r$-minimal models are isometric. 
    \item Any finite quasimetric space has an $r$-deformation retract, which is its $r$-minimal model. 
\end{enumerate}
\end{theorem}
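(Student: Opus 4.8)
The plan is to establish part (3) first, since parts (1) and (2) will follow from it together with the already-proved Proposition \ref{prop:equivalence_of_minimal_models}. The key point for (3) is that an $r$-deformation retract of an $r$-deformation retract is again an $r$-deformation retract, and that passing to a proper $r$-deformation retract strictly decreases cardinality; since $X$ is finite, this process must terminate at an $r$-minimal space.

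For part (3), I would argue as follows. If $X$ is already $r$-minimal, take $M=X$. Otherwise $X$ has a proper $r$-deformation retract $A_1 \subsetneq X$ with retraction $\rho_1$ satisfying $\iota_1\rho_1 \sim_r 1_X$. If $A_1$ is $r$-minimal, we stop; otherwise $A_1$ has a proper $r$-deformation retract $A_2 \subsetneq A_1$, and so on. Since $|X| > |A_1| > |A_2| > \cdots$, this chain stabilises after finitely many steps at some $M = A_k$ which is $r$-minimal. It remains to check that $M$ is an $r$-deformation retract of $X$. This is the transitivity claim: if $A \subseteq X$ is an $r$-deformation retract with retraction $\rho:X\to A$ and $\iota\rho \sim_r 1_X$, and $B \subseteq A$ is an $r$-deformation retract with retraction $\sigma:A\to B$ and $\iota'\sigma \sim_r 1_A$, then $B\subseteq X$ is an $r$-deformation retract with retraction $\sigma\rho$. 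Indeed $(\sigma\rho)(\iota\iota') = \sigma(\rho\iota)\iota' = \sigma\iota' = 1_B$, so it is a retract; and using Lemma \ref{lemma:congruence} with the congruence $\iota'\sigma \sim_r 1_A$ precomposed with $\rho$ and postcomposed with $\iota$, we get $\iota\iota'\sigma\rho \sim_r \iota\rho \sim_r 1_X$, so $B$ is an $r$-deformation retract of $X$. Applying this inductively along the chain $A_k \subseteq \cdots \subseteq A_1 \subseteq X$ shows $M$ is an $r$-deformation retract of $X$, and it is $r$-minimal, hence an $r$-minimal model. This proves (3), and in particular (1) existence.

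For uniqueness in (1): suppose $M$ and $M'$ are both $r$-minimal models of $X$. Then $M \sim_r X \sim_r M'$ in ${\sf QMet}/\sim_r$, so $M$ and $M'$ are $r$-homotopy equivalent via some short map $\varphi:M\to M'$. By Proposition \ref{prop:equivalence_of_minimal_models}, $\varphi$ is an isometry. Part (2) is then formal: if $X$ and $Y$ are $r$-homotopy equivalent, then $M_r(X) \sim_r X \sim_r Y \sim_r M_r(Y)$, so the two $r$-minimal models are $r$-homotopy equivalent, hence isometric by Proposition \ref{prop:equivalence_of_minimal_models}; conversely, if $M_r(X) \cong M_r(Y)$, then $X \sim_r M_r(X) \cong M_r(Y) \sim_r Y$, so $X$ and $Y$ are $r$-homotopy equivalent.

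I expect the only genuine content to be the termination argument plus the transitivity of $r$-deformation retracts; everything else is bookkeeping within the quotient category. The mild subtlety to watch is that the retraction and inclusion are not canonical, so one must carry explicit maps through the induction rather than appeal to functoriality — but Lemma \ref{lemma:congruence} (the congruence property of $\sim_r$) handles exactly this, so there is no real obstacle.
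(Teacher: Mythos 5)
Your proof is correct and takes essentially the same route as the paper: the paper chooses an $r$-deformation retract of minimal cardinality and uses the very same transitivity computation ($\rho'\rho\,\iota_{A,X}=1_A$ and $\iota_{A,X}\,\rho'\rho\sim_r 1_X$, via Lemma \ref{lemma:congruence}) to show it is $r$-minimal, then invokes Proposition \ref{prop:equivalence_of_minimal_models} for uniqueness and part (2) exactly as you do. Your descending-chain formulation is just an equivalent repackaging of the paper's minimal-cardinality argument.
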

\begin{proof}For a finite quasimetric space $X$ we take any $r$-deformation retract $M\subseteq X$ of the minimal possible cardinality. Denote the retraction by $\rho:X\to M.$  We claim that $M$ is $r$-minimal. Indeed, if $A\subseteq M$ is an $r$-deformation retract with a retraction $\rho':M\to A$ then the equations $\rho\iota_{M,X}=1_M$ and $\rho'\iota_{A,M}=1_A$ imply $\rho'\rho\iota_{A,X}=1_A,$ and relations $\iota_{M,X}\rho\sim_r 1_X$ and $\iota_{A,M}\rho'\sim_r 1_M$ imply $\iota_{A,X} \rho'\rho \sim_r 1_X.$ Hence $A$ is an $r$-deformation retract of $X,$ which is subset of $M,$ and using the minimality of the cardinality of $M,$ we obtain $A=M.$ 

Proposition \ref{prop:equivalence_of_minimal_models} implies that the $r$-minimal model is defined uniquely up to isometry. It also implies that $r$-homotopy equivalent quasimetric spaces have isomorphic $r$-minimal homotopy models. 
\end{proof}

The $r$-minimal model of a quasimetric space $X$ is denoted by $M_r(X).$  Note that for $r'>r$ we have 
\begin{equation}
M_{r'}(M_r(X)) \cong M_{r'}(X).
\end{equation}

\begin{example}
Let $X$ be a finite metric space (with symmetric finite distance). For $r>{\rm diam}(X),$ the metric space $M_r(X)$ is a one-point space. Here we see that the embedding $\iota:M_r(X)\to X$ is not uniquely defined, because we can take any map from the one-point space to $X$ as the inclusion. 
\end{example}

\begin{example} Consider the following graph $X$
\begin{equation}
\begin{tikzpicture}
\draw[line width=1pt]
(-2,0) node (b)
{$0$}
({cos(0*360/5)},{sin(0*360/5)})  node (a1) {$4$}
({cos(1*360/5)},{sin(1*360/5)})  node (a2) {$5$}
({cos(2*360/5)},{sin(2*360/5)})  node (a3) {$1$}
({cos(3*360/5)},{sin(3*360/5)})  node (a4) {$2$}
({cos(4*360/5)},{sin(4*360/5)})  node (a5) {$3$}
;
\path[-]
(a5) edge node {} (a1) 
(a1) edge node {} (a2)
(a2) edge node {} (a3)
(a3) edge node {} (a4)
(a4) edge node {} (a5)
;
\path[-]
(b) edge node {} (a3)
(b) edge node {} (a4)
;
\end{tikzpicture}
\end{equation}
as a metric space with the shortest path distance. Take $r=1.$ Then the $1$-minimal model $M_1(X)$ is the pentagon with the vertex set $\{1,2,3,4,5\}.$ There are two choices of retractions for this $1$-deformation retract $\rho,\rho':X\to M_1(X):$  $\rho(0)=1$ and $\rho'(0)=2.$ In particular, this example shows that the retraction $\rho:X\to M_r(X)$ can't be defined by a universal property in ${\sf QMet}$. 
\end{example}

\begin{example}\label{example:Lev}
The author is grateful to Lev Mukoseev for the following non-trivial but clarifying example.
Consider the following digraph $X$ with four vertices together with a subdigraph $A\subseteq X$ with three vertices 
\begin{equation}
X:\ 
\begin{tikzcd}
0 \ar[r] \ar[d] &1 \ar[d]\\
3\ar[r] &2 \ar[lu]
\end{tikzcd}
\hspace{1.5cm} 
 A:\ 
\begin{tikzcd}
0 \ar[r]  &1 \ar[d]\\
 &2 \ar[lu]
\end{tikzcd}
\end{equation}
It is easy to see that $A$ is a retract of $X,$ because there is a retraction $\rho:X\to A$ such that $\rho(3)=1.$ 
We claim that $A$ is a $1$-deformation retract of $X$ and $A=M_1(X).$ In particular 
\begin{equation}
X \sim_1 A.
\end{equation}
In order to prove it consider an endomorphism $\varphi:X\to X$ defined by 
\begin{equation}
0\mapsto 2, \hspace{5mm} 1\mapsto 0, \hspace{5mm}  2\mapsto 1, \hspace{5mm} 3\mapsto 0.
\end{equation}
It can be depicted as follows.
\begin{equation}
\begin{tikzcd}
0 \ar[r] \ar[d] \ar[rd,bend right = 7mm,red!50,dashed] &1 \ar[d] \ar[l,bend right = 15mm,red!50,dashed] \\
3\ar[r] \ar[u,bend left = 15mm,red!50,dashed] &2 \ar[lu] \ar[u,bend right = 15mm,red!50,dashed]
\end{tikzcd}   
\end{equation}
Then $d(\varphi,1_X)=1$ and $\iota \rho =\varphi^3.$ It follows that $\iota\rho= \varphi^3 \sim_1 \varphi^2 \sim_1 \varphi \sim_1 1_X.$
\end{example}

\begin{remark} We say that a finite metric space $X$ is in general position if for any two pairs of distinct points $x\neq x'$ and $y\neq y'$ such that $\{x,x'\}\ne \{y,y'\}$ the distances are not equal $d(x,x')\neq d(y,y').$ Then for any finite metric space in general position, if $M_r(X)$ consists of more then one point, then the embedding $\iota : M_r(X)\to X$ is uniquely defined.
\end{remark}

\subsection{Homotopy jumping points} 
 For a finite quasimetric space $X$ we consider a function from non-negative real numbers to the interval $c_X:[0,+\infty) \to [0,1]$ defined as the ratio of cardinalities  
\begin{equation}
c_X(r)= \frac{|M_r(X)|}{|X|}.
\end{equation}

\begin{lemma}\label{lemma:decreasing_function}
The function $c_X$ is decreasing,  lower semi-continuous and it has a finite number of values. 
\end{lemma}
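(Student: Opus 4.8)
The plan is to establish the three properties—monotonicity, lower semi-continuity, and finiteness of the value set—by exploiting the basic compatibility $M_{r'}(M_r(X)) \cong M_{r'}(X)$ for $r > r'$ together with the fact that $\sim_r$ gets coarser as $r$ grows. First I would prove monotonicity. For $r' < r$, the relation $\sim_{r'}$ refines $\sim_r$, so every $r'$-deformation retract is an $r$-deformation retract; more precisely, given an $r$-deformation retract $\rho' : M_{r'}(X) \to A$ witnessing $M_r(M_{r'}(X))$, composing with the retraction $X \to M_{r'}(X)$ produces an $r$-deformation retract of $X$, so $|M_r(X)| \leq |M_{r'}(X)|$, i.e. $c_X$ is decreasing. (Alternatively one invokes $M_r(X) \cong M_r(M_{r'}(X))$ directly and notes $M_r$ of a space can only shrink it.)

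Next, finiteness of the value set is immediate once monotonicity is known: $c_X$ takes values in the finite set $\{0/|X|, 1/|X|, \dots, |X|/|X|\}$, and being a decreasing function into a finite ordered set, it is a step function. This also gives us, for free, that there are only finitely many "jump" points $0 < r_1 < \dots < r_n$ where the value strictly decreases, which is exactly the set ${\rm hj}(X)$ referenced after the lemma.

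The main work is lower semi-continuity, which here (for a decreasing function) amounts to right-continuity: we must show $c_X(r) = \lim_{s \to r^+} c_X(s)$, equivalently that $c_X$ does not jump down strictly to the right of $r$, i.e. there is $\varepsilon > 0$ with $M_s(X) \cong M_r(X)$ for all $s \in [r, r+\varepsilon)$. The key observation is that the $r$-homotopy relation on the finite set ${\rm SMap}(X,X)$ is determined by the finite set of comparisons "$d(\varphi,\psi) \leq r$" over the finitely many pairs $\varphi,\psi$, and each such comparison only changes at one of the finitely many values $d(\varphi,\psi)$. Hence there is $\varepsilon > 0$ such that for all $s \in [r, r+\varepsilon)$, the relation $\sim_s$ on ${\rm SMap}(X,X)$ coincides with $\sim_r$; consequently ${\rm End}_s(X) = {\rm End}_r(X)$, the notion of $r$-minimality is unchanged on every subset of $X$ (by Proposition \ref{prop:minimal-equi} applied to subspaces), and an $s$-deformation retract is the same as an $r$-deformation retract. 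Therefore $M_s(X)$ is an $r$-deformation retract that is $r$-minimal, so by uniqueness (Theorem \ref{th:minimal_models}) $M_s(X) \cong M_r(X)$, giving $c_X(s) = c_X(r)$.

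The step I expect to be the main obstacle is the last one: making rigorous that "$\sim_s$ agrees with $\sim_r$ near $r$" really forces "$M_s(X) \cong M_r(X)$". One has to be a little careful because $r$-homotopy of maps $\varphi,\psi : X \to Y$ is tested inside ${\rm SMap}(X,Y)$, and for the equality of $r$- and $s$-deformation retracts one needs the comparison set to be stable not just for endomorphisms of $X$ but for all the relevant hom-quasimetric-spaces; since $X$ is finite there are only finitely many subsets $A \subseteq X$ and finitely many short maps among them, so the union of all the critical values $d(\varphi,\psi)$ over all these finitely many hom-spaces is still finite, and choosing $\varepsilon$ smaller than the distance from $r$ to the next critical value above $r$ handles everything simultaneously. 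Monotonicity and finiteness are then routine bookkeeping on top of this.
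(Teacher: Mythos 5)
Your proof is correct and follows essentially the same route as the paper: monotonicity via composing deformation retracts (equivalently $M_{r}(M_{r'}(X))\cong M_{r}(X)$), finiteness of the value set trivially from $c_X(r)\in\{k/|X|\}$, and lower semi-continuity reduced to right-continuity, established by finding $\varepsilon>0$ below the next critical distance value above $r$. The step you flag as the main obstacle is handled in the paper slightly more economically: for finite spaces $d(\varphi,\psi)=\max_x d(\varphi(x),\psi(x))$ is itself one of the finitely many pairwise distances occurring in $X$, so stability of the comparisons $d(x,x')\leq s$ for points of $X$ already controls $\sim_s$ on every relevant hom-space simultaneously, which is exactly the uniformity you obtain by ranging over all subsets and short maps.
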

\begin{proof}
If $r'>r,$ then $M_{r'}(M_r(X))\cong M_{r'}(X),$ and hence $c_X(r')\leq c_X(r).$ Now assume that $r_n\to r$ and prove that $c_X(r)\leq \underset{n\to +\infty}{\rm liminf} \: c_X(r_n).$ Since $c_X$ is decreasing, we can assume that $r_n>r$ is an decreasing sequence. Since $X$ is finite, for $n>\!>1$ we have that for any two points $x,x'$ either $d(x,x')\leq r$ or $d(x,x')>r_n.$ Therefore, for $n>\!>1$ the inequality $d(x,x')\leq r_n$ is equivalent to the inequality $d(x,x')\leq r.$ Hence for $n>\!>1$ we have  $M_r(X)=M_{r_n}(X)$ and $c_X(r_n)=c_X(r).$
\end{proof}

Lemma \ref{lemma:decreasing_function} implies that $c_X$ has a finite number of jumping points  $0<r_1<\dots<r_n$ such that 
$c_X(r_i)<c_X(r)$
for any $r<r_i.$ We will call them homotopy jumping points of $X$ and set
\begin{equation}
{\rm hj}(X)=\{r_1,\dots,r_n\}.
\end{equation}
If we also set $r_0=0$ and $r_{n+1}=+\infty,$ then  
\begin{equation}
M_r(X)\cong M_{r_i}(X), \hspace{1cm} r\in [r_i,r_{i+1}).
\end{equation}

\begin{remark}
Let us call a quasimetric space $X$ homotopy stable, if ${\rm hj}(X)=\emptyset.$ In other words, $X$ is homotopy stable, if $M_r(X)=X$ for any $r\geq 0.$ If we set ${\rm End}_{\rm fin}(X)=\bigcup_r {\rm End}_r(X),$ then Proposition \ref{prop:minimal-equi} says that $X$ is homotopy stable if and only if ${\rm End}_{\rm fin}(X)\subseteq {\rm Aut}(X).$ For example, the quasimetric spaces defined by the digraphs 
\begin{equation}
\begin{tikzcd}
& \bullet  &  \\
\bullet\ar[ru] \ar[rd] &  & \bullet \ar[lu] \ar[ld] \\
& \bullet  &    
\end{tikzcd} \hspace{1cm}
\begin{tikzcd}
& \bullet \ar[r] & \bullet \ar[l] & \\
\bullet\ar[ru] \ar[rd] & & & \bullet \ar[lu] \ar[ld] \\
& \bullet \ar[r] & \bullet \ar[l] &    
\end{tikzcd}
\end{equation}
are homotopy stable.
\end{remark}

Any finite quasimetric space $X$ with the set of homotopy jumping points 
$\{r_1<\dots<r_n\}$ defines a homotopy stable finite quasimetric space given by
\begin{equation}
M_{\rm stab}(X) := M_{r_n}(X),
\end{equation}
if $n>0,$ and $M_{\rm stab}(X)=X,$ if $n=0.$ For a finite quasimetric space $X$ with a point $x_0\in X$ such that $d(x,x_0)<+\infty$ for any $x\in X,$ the metric space $M_{\rm stab}(X)$ is always a one-point space. 

\begin{example}[Discontinuity of ${\rm hj}$ with respect to the Hausdorff distance]\label{ex:discontinuity}
For any $0\leq \varepsilon<1/2$ we consider a subset of $\mathbb{R}^2$ of six points 
\begin{equation}
X_\varepsilon=(\{-2,2\}\times \{0,1\})\cup\{(-\varepsilon,0),(\varepsilon,1)\},
\end{equation}
which can be depicted as follows.
\begin{equation}
\begin{tikzpicture}
\draw[line width=1pt]
(-2,1) node (m21)
{$\bullet$}
(-2,0) node (m20)
{$\bullet$}
(0,1) node (01)
{$\circ$}
(0.3,1) node (e1)
{$\bullet$}
(0,0) node (00)
{$\circ$}
(-0.3,0) node (me0)
{$\bullet$}
(2,1) node (21)
{$\bullet$}
(2,0) node (20)
{$\bullet$}
;
\end{tikzpicture}
\end{equation}
Note that $\underset{\varepsilon\to 0+}\lim X_\varepsilon = X_0$ with respect to the Hausdorff distance. For $\varepsilon=0$ we have $M_1(X_0)\cong \{-2,0,2\}\times\{0\},$ because there is a retraction $\rho:X_0\to \{-2,0,2\}\times\{0\}$ defined by $\rho((a,b))=(a,0)$ such that $\iota \rho\sim_1 1_X.$ It is also easy to see that $M_2(X_0)$ is a one-point set. Using this we get 
\begin{equation}
{\rm hj}(X_0)=\{1,2\}.
\end{equation}
However, for $\varepsilon>0$ the direct verification using brute force 
shows that for any non-identical short map 
$\varphi:X_\varepsilon\to X_\varepsilon$ we have $d(\varphi,1_X)=d(1_X,\varphi)\geq \sqrt{1+(2-\varepsilon)^2}.$ A non-identical short map 
$\varphi:X_\varepsilon\to X_\varepsilon$ 
with the minimal possible $d(1,\varphi)$ is the map that sends $(-2,0),(-2,1),(-\varepsilon,0)$ to $(-\varepsilon,0)$ and sends $(2,0),(2,1),(\varepsilon,1)$ to $(\varepsilon,1).$ Using this we obtain 
\begin{equation}
{\rm hj}(X_\varepsilon)=\{\sqrt{1+(2-\varepsilon)^2}\}.
\end{equation}
Since $\underset{\varepsilon\to 0+}\lim \{\sqrt{1+(2-\varepsilon)^2}\}=\{\sqrt{5}\}\ne \{1,2\}$, the map $X\mapsto {\rm hj}(X)$ is not continuous with respect to the Hausdorff distance. 
\end{example}

\section{Magnitude-path spectral sequence of a digraph}

\subsection{\texorpdfstring{$r$}{}-minimal models of digraphs} We define a digraph as a pair $G=(G_0,G_1),$ where $G_1\subseteq G_0\times G_0$ such that $(g,g)\in G_1$ for any $g\in G_0.$ Elements of $G_0$ are called vertices, elements of $G_1$ are called arrows, arrows of the form $(g,g)$ are called degenerate arrows. A morphism of digraphs $\varphi:G\to H$ is a map $\varphi:G_0\to H_0$ such that $(g,g')\in G_1$ implies $(\varphi(g),\varphi(g'))\in H_1.$ The category of digraphs is denoted by ${\sf Digr}.$ A path in a digraph $G$ is a tuple $(g_0,\dots,g_n)$ such that $(g_i,g_{i+1})\in G_1.$ Note that any morphism sends a path to a path. 

Any digraph $G$ defines a quasimetric space, that we denote by ${\rm Q}(G),$ whose set of points is $G_0$ and the distance from $g$ to $g'$ is defined as the infimum of the lengths of paths from $g$ to $g'.$ Therefore we get a functor
\begin{equation}
{\rm Q}:{\sf Digr} \longrightarrow {\sf QMet}.
\end{equation}
It is easy to check that the functor is full and faithful. So we can identify ${\sf Digr}$ with a full subcategory of ${\sf QMet}.$ So we can define $r$-homotopic maps for digrphs.

A subdigraph $H$ of a digraph $G$ is called convex, if the shortest path distance on $H$ coincides with the induced distance: $d_H(h,h')=d_G(h,h')$ for any vertices $h,h'$ of $H.$ A subdigraph $H\subseteq G$ is called a retract, if there is a morphism of digraphs $\rho:G\to H$ such that $\rho \iota=1_H,$ where $\iota:H\hookrightarrow G$ is the canonical embedding. 

\begin{lemma}
A retract of a digraph is a convex subdigraph. 
\end{lemma}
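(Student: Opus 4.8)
The plan is to establish the two inequalities $d_G(h,h')\le d_H(h,h')$ and $d_H(h,h')\le d_G(h,h')$ for all vertices $h,h'$ of $H$; together they give $d_H=d_G$ on $H$, which is exactly the definition of convexity. The first inequality requires nothing beyond the fact that $H$ is a subdigraph of $G$: every path in $H$ is in particular a path in $G$ of the same length, so the infimum computing $d_G(h,h')$ ranges over a set containing the one computing $d_H(h,h')$, whence $d_G(h,h')\le d_H(h,h')$.

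For the reverse inequality I would fix a retraction $\rho\colon G\to H$ with $\rho\iota=1_H$ and argue as follows. If $d_G(h,h')=+\infty$ there is nothing to prove, since the first inequality already forces $d_H(h,h')=+\infty$. Otherwise write $d_G(h,h')=n$ and choose a path $(g_0,\dots,g_n)$ in $G$ with $g_0=h$ and $g_n=h'$. Since a morphism of digraphs sends paths to paths, $(\rho(g_0),\dots,\rho(g_n))$ is a path in $H$; some of its steps may be degenerate arrows where $\rho$ collapses consecutive vertices, but degenerate arrows belong to $H_1$ by definition, so this is a genuine path of length $n$. Its endpoints are $\rho(g_0)=\rho(\iota(h))=h$ and $\rho(g_n)=\rho(\iota(h'))=h'$, hence $d_H(h,h')\le n=d_G(h,h')$.

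Combining the two inequalities proves the lemma. I do not anticipate any real obstacle here; the only point requiring a moment's care is the possible appearance of degenerate arrows in the pushed-forward path, which is harmless given the conventions fixed earlier in the paper. Alternatively, one can phrase the reverse inequality in a single line by invoking functoriality of ${\rm Q}$: the map $\rho$ is short as a morphism in ${\sf QMet}$, so $d_G(h,h')\ge d_H(\rho(h),\rho(h'))=d_H(h,h')$, and this together with the subdigraph inequality finishes the argument.
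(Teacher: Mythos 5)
Your proof is correct and follows essentially the same route as the paper: the inequality $d_G\le d_H$ comes from $H$ being a subdigraph, and the reverse inequality from pushing a shortest path in $G$ forward along the retraction $\rho$ to a path of the same length in $H$ with the same endpoints. The remark about degenerate arrows and the alternative one-line argument via shortness of $\rho$ are fine but not needed beyond what the paper already does.
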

\begin{proof} Let $H$ be a retract of $G.$
It is obvious that $d_H(h,h')\geq d_G(h,h')$ for any $h,h'\in H_0.$ On the other hand, for any path $(g_0,\dots,g_n)$ from $h$ to $h'$ in $G$  we have a path $(\rho(g_0),\dots,\rho(g_n))$ from $h$ to $h'$ in $H.$ Hence $d_H(h,h')\leq d_G(h,h').$
\end{proof}

\begin{lemma}\label{lemma:retract_digrph}
Let $G$ be a digraph and $A$ be a retract of ${\rm Q}(G).$ Then $A={\rm Q}(H)$ for a uniquely defined retract $H\subseteq G.$ 
\end{lemma}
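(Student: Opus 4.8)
The plan is to take $H$ to be the full (induced) subdigraph of $G$ on the vertex set $A\subseteq G_0$, that is $H_0=A$ and $H_1=G_1\cap(A\times A)$, and to prove that this $H$ is a retract of $G$ with ${\rm Q}(H)=A$, and that it is the only subdigraph of $G$ with this property.

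First I would fix a retraction $\rho:{\rm Q}(G)\to A$ with $\rho\iota=1_A$ and observe that, viewed as a map $G_0\to A$, $\rho$ is a morphism of digraphs $G\to H$: if $(g,g')\in G_1$ then $d_G(g,g')\leq 1$, hence $d_G(\rho(g),\rho(g'))=d_A(\rho(g),\rho(g'))\leq 1$ because $\rho$ is short and $A$ carries the induced distance; therefore either $\rho(g)=\rho(g')$ or $(\rho(g),\rho(g'))\in G_1$, and in both cases $(\rho(g),\rho(g'))\in H_1$ since $\rho(g),\rho(g')\in A$. As $\rho\iota=1_A$ on vertices, this exhibits $H$ as a retract of $G$ in ${\sf Digr}$.

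Next I would check that ${\rm Q}(H)=A$, i.e.\ that the intrinsic shortest-path distance $d_H$ agrees with the distance $d_A$ induced from ${\rm Q}(G)$. The inequality $d_H\geq d_A$ is immediate because every path in $H$ is a path in $G$. For $d_H\leq d_A$, take $a,a'\in A$ and a shortest path $(g_0=a,g_1,\dots,g_n=a')$ in $G$, so $n=d_G(a,a')=d_A(a,a')$; applying $\rho$ and using the previous paragraph, $(\rho(g_0),\dots,\rho(g_n))$ becomes, after deleting repetitions, a path in $H$ from $a$ to $a'$ of length $\leq n$, whence $d_H(a,a')\leq n$. This gives the existence statement. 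For uniqueness, let $H'$ be any subdigraph of $G$ with ${\rm Q}(H')=A$; then $H'_0=A$ and $d_{H'}=d_A=d_G|_{A\times A}$ (equivalently $H'$ is convex, by the preceding lemma). Being a subdigraph of $G$ forces $H'_1\subseteq G_1\cap(A\times A)=H_1$; conversely, if $(a,a')\in H_1$ with $a\neq a'$ then $d_G(a,a')=1$, so $d_{H'}(a,a')=1$, which requires a length-one path in $H'$, i.e.\ $(a,a')\in H'_1$; degenerate arrows are always present, so $H_1\subseteq H'_1$ and $H'=H$.

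I do not expect a genuine obstacle; the one point needing care is the identity $d_H=d_A$, that the induced subdigraph on $A$ actually recovers the induced distance. This is exactly where the retraction $\rho$ is essential: it transports shortest paths of $G$ into $H$, and the statement would fail for an arbitrary subset $A$ that is not a retract.
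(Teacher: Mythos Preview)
Your proof is correct and follows essentially the same approach as the paper: the paper defines $H_1=\{(h,h')\in A\times A\mid d_G(h,h')\leq 1\}$, which coincides with your induced subdigraph $G_1\cap(A\times A)$, and verifies that $\rho$ is a digraph morphism $G\to H$ in the same way. Where you prove $d_H=d_A$ by explicitly transporting shortest paths along $\rho$, the paper instead invokes the preceding lemma (retracts are convex), which is precisely the same path-transport argument; and you go further than the paper by explicitly supplying the uniqueness argument, which the paper's proof omits.
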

\begin{proof} Consider a subdigraph $H\subseteq G$ such that $H_0=A$ and $H_1=\{(h,h')\mid d_G(h,h')\leq 1\}.$ Denote by $\rho:{\rm Q}(G)\to A$ a retraction. If $(g,g')\in G_1,$ then $d_G(g,g')\leq 1,$ and hence $(\rho(g),\rho(g'))\in H_1.$ So $\rho$ defines a morphism of digraphs $\rho':G\to H.$ Therefore $H$ is a retract of $G.$ In particular $H$ is a convex subdigraph. It follows that the distance between points in $A$ coincides with the distance in ${\rm Q}(H).$ 
\end{proof}

Lemma \ref{lemma:retract_digrph} implies that for a finite digraph $G$  and a natural $n$ there is a digraph $M_n(G)$ such that ${\rm Q}(M_n(G)) = M_n({\rm Q}(G)),$ that can be included into $G$ as an $n$-deformation retract, but the embedding is not unique. Therefore for a finite digraph we obtain a nested sequence of convex sudigraphs 
\begin{equation}
G \supseteq M_1(G) \supseteq M_2(G) \supseteq \dots,
\end{equation}
where the inclusions are not uniquely defined, but the digraphs $M_n(G)$ are defined uniquely up to isomorphism.

\subsection{Reminder on the magnitude-path spectral sequence}
Here we remind the construction and the main properties of the magnitude-path spectral sequence of a digraph that can be found in 
\cite{asao2023magnitude},  \cite{asao2023magnitude2}, \cite{hepworth2023reachability},\cite{di2023path}.

Let $\KK$ be a commutative ring. For a set $S$ we will denote by $\KK\cdot S$ the free $\KK$-module freely generated by $S.$
For a digraph $G$ we consider a filtered chain complex ${\rm RC}_\bullet(G)$ over $\KK,$ called reachability complex, whose $n$-th component is freely generated by tuples of vertices  $(g_0,\dots,g_n)$ without consecutive repetitions $g_i\neq g_{i+1}$ such that $d(g_i,g_{i+1})<+\infty.$ 
\begin{equation}
{\rm RC}_n(G) =\KK\cdot \{(g_0,\dots,g_n)\mid d(g_i,g_{i+1})<+\infty, g_i\ne g_{i+1}\}.
\end{equation}
Tuples $(g_0,\dots,g_n)$ with a consecutive repetition $g_i=g_{i+1}$ will be identified with zero in the chain complex. Then the differential is defined by the formula
\begin{equation}
\partial(g_0,\dots,g_n) = \sum_{i=0}^n(-1)^i (g_0,\dots,\hat g_i,\dots,g_n).
\end{equation}
If we set 
$L(g_0,\dots,g_n)=\sum_i d(g_i,g_{i+1}),$
the filtration on the chain complex is defined so that for an integer $\ell$ the component $F_\ell{\rm RC}_n(G)$ is generated by tuples $(g_0,\dots,g_n)$ such that $L(g_0,\dots,g_n)\leq \ell.$ 
\begin{equation}
F_0{\rm RC}(G)\subseteq F_1{\rm RC}(G)\subseteq \dots \subseteq {\rm RC}(G).    
\end{equation}
The spectral sequence associated with this filtered chain complex is called magnitude-path spectral sequence of $G$ and will be denoted by $E(G).$ Its first page consists of the magnitude homology
\begin{equation}
E^1_{\ell,n-\ell}(G) = {\rm MH}_{n,\ell}(G),
\end{equation}
the zero row of the second page is the path homology
\begin{equation}
E^2_{n,0}(G) = {\rm PH}_n(G).
\end{equation}
Since the filtration of $\RC(G)$ is exhaustive and bounded below, the spectral sequence converges to the homology of the chain complex \cite[Th.5.5.1(2)]{weibel1995introduction} (here by convergence we mean  ``unbounded'' convergence in the sense of unbounded spectral sequences \cite[\S 5.2.11]{weibel1995introduction})
\begin{equation}
E(G)\Rightarrow H_*(\RC(G)).
\end{equation}
The homology of this chain complex is known as reachability homology and denoted by ${\rm RH}_*(G).$ The reachability homology is equal to the homology of the reachability preorder ${\rm Pre}(G),$ which is defined so that its elements are points of $G$ and $g\leq g'$ if and only if $d(g,g')< \infty$ 
\begin{equation}
{\rm RH}_n(G) = H_n(\RC(G)) \cong H_n({\rm Pre}(G)).
\end{equation}

The $r+1$-st page of the magnitude-path spectral sequence is an $r$-homotopy invariant in the following sense. If $\varphi\sim_r\psi:G\to H$ two $r$-homotopy invariant morphisms, then they induce equal morphisms on the $r+1$-st pages
\begin{equation}
E^{r+1}(\varphi)=E^{r+1}(\psi) : E^{r+1}(G) \longrightarrow E^{r+1}(H)
\end{equation}
(see \cite[Th. 4.37]{asao2023magnitude2}).

\subsection{Decomposition of the magnitude-path spectral sequence}

\begin{theorem}\label{theorem:decomposition_magnitude-path} Let $G$ be a finite digraph and let
${\rm hj}(G)=\{r_1<\dots<r_n\}$
be the set of homotopy jumping points of $G$. 
Then the magnitude-path spectral sequence $E(G)$ can be decomposed into a direct sum of spectral sequences
\begin{equation}\label{eq:decomp}
E(G) = \bigoplus_{k=1}^n E(G)_{[k]} \oplus E(G)_{[\infty]}
\end{equation}
such that $E^{1+r_k}(G)_{[k]}=0$ and for any $1\leq j\leq n$ we have 
\begin{equation}\label{eq:decomp2}
E(M_{r_j}(G)) \cong \bigoplus_{k=j+1}^n E(G)_{[k]} \oplus E(G)_{[\infty]}. 
\end{equation}
In particular, $E(M_{\rm stab}(G))=E(G)_{[\infty]}.$
\end{theorem}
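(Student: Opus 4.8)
The plan is to build the decomposition step by step along the nested sequence of retracts $G\supseteq M_{r_1}(G)\supseteq\dots\supseteq M_{r_n}(G)=M_{\mathrm{stab}}(G)$, at each stage splitting off one summand. Fix $j$ and write $M=M_{r_j}(G)$, $M'=M_{r_{j-1}}(G)$ (with $M_{r_0}(G)=G$). By Lemma \ref{lemma:retract_digrph} and the construction of the nested sequence, $M$ is a convex subdigraph of $M'$ and an $r_j$-deformation retract, with embedding $\iota\colon M\hookrightarrow M'$ and retraction $\rho\colon M'\to M$ satisfying $\rho\iota=1_M$ and $\iota\rho\sim_{r_j}1_{M'}$. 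The first step is to check that $\iota$ and $\rho$, being morphisms of digraphs that moreover respect lengths of tuples (since $M$ is convex, $L$ computed in $M$ agrees with $L$ computed in $M'$ on tuples of vertices of $M$), induce morphisms of $\ZZ$-filtered chain complexes $\iota_*\colon\RC(M)\to\RC(M')$ and $\rho_*\colon\RC(M')\to\RC(M)$ with $\rho_*\iota_*=1$. Hence $\iota_*$ is a split monomorphism of filtered complexes and $\RC(M')\cong \RC(M)\oplus K$ where $K=\Ker(\rho_*)$ is a filtered subcomplex; this splitting is respected by the filtration, so passing to the associated spectral sequences gives $E(M')\cong E(M)\oplus E(K)$, where $E(K)$ is the spectral sequence of the filtered complex $K$. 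Define $E(G)_{[j]}:=E(K)$ for this value of $j$; iterating from $j=1$ down to $j=n$ and setting $E(G)_{[\infty]}:=E(M_{\mathrm{stab}}(G))$ yields \eqref{eq:decomp} and \eqref{eq:decomp2} simultaneously (equation \eqref{eq:decomp2} is just the partial telescoping of the iterated splitting, and the final sentence $E(M_{\mathrm{stab}}(G))=E(G)_{[\infty]}$ is the $j=n$ case).

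The substantive point is the vanishing statement $E^{r_k+1}(G)_{[k]}=0$. Here I would use Asao's $r$-homotopy invariance of the $(r+1)$-st page, \cite[Th.~4.37]{asao2023magnitude2}. With $k$ fixed, $M=M_{r_k}(G)$ and $M'=M_{r_{k-1}}(G)$ as above, we have $\iota\rho\sim_{r_k}1_{M'}$, so on the $(r_k+1)$-st pages $E^{r_k+1}(\iota)E^{r_k+1}(\rho)=E^{r_k+1}(\iota\rho)=E^{r_k+1}(1_{M'})=1$; combined with $E^{r_k+1}(\rho)E^{r_k+1}(\iota)=1$ (which holds already at the chain level), this shows $E^{r_k+1}(\iota)$ and $E^{r_k+1}(\rho)$ are mutually inverse isomorphisms $E^{r_k+1}(M)\cong E^{r_k+1}(M')$. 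Since the splitting $E(M')\cong E(M)\oplus E(G)_{[k]}$ is induced by $\iota_*$ and $\rho_*$ compatibly with the spectral-sequence maps, on the $(r_k+1)$-st page it becomes $E^{r_k+1}(M')\cong E^{r_k+1}(M)\oplus E^{r_k+1}(G)_{[k]}$ with the first projection equal to $E^{r_k+1}(\rho)$ and the inclusion of the first summand equal to $E^{r_k+1}(\iota)$; as these are already isomorphisms onto all of $E^{r_k+1}(M')$, the complementary summand $E^{r_k+1}(G)_{[k]}$ must vanish.

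I expect the main obstacle to be bookkeeping rather than conceptual: one must make sure the chain-level splitting $\RC(M')\cong\RC(M)\oplus K$ is genuinely a splitting of \emph{filtered} complexes (so that $E(-)$ applied to it distributes over the direct sum, page by page, including the abutment), and that the identification of the induced maps on the $(r_k+1)$-st page with $E^{r_k+1}(\iota)$, $E^{r_k+1}(\rho)$ is literally the naturality of passing from filtered complexes to spectral sequences. The convexity of $M$ inside $M'$ is what guarantees $\iota_*$ is filtration-preserving in both directions (it is automatic that a subcomplex inclusion does not decrease filtration; convexity is needed so that $\rho_*$ does not either, i.e.\ $\rho$ does not shorten $L$), and this is the one place where the digraph hypothesis — as opposed to a general quasimetric space — is used, via Lemma \ref{lemma:retract_digrph}. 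Everything else is formal: direct sums of spectral sequences are computed termwise, and a summand that vanishes on one page vanishes on all later pages, so $E(G)_{[k]}$ contributes nothing to $E^r(G)$ for $r>r_k$, which is exactly the informal statement in the introduction that $E(M_{\mathrm{stab}}(G))$ captures $E^r(G)$ for all large $r$.
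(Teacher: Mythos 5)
Your proposal is correct and follows essentially the same route as the paper: split off one summand at each stage of the nested sequence of minimal models, and use Asao's $r$-homotopy invariance of the $(r+1)$-st page to show $E^{r_k+1}(\iota)$ and $E^{r_k+1}(\rho)$ are mutually inverse, forcing the complementary summand to vanish. The only (cosmetic) difference is that you perform the idempotent splitting at the level of $\ZZ$-filtered chain complexes via $e=\iota_*\rho_*$, whereas the paper splits directly in the category of spectral sequences using that this category is Karoubian; your version is if anything slightly more explicit about why the inclusion of the summand on the $(r_k+1)$-st page is literally $E^{r_k+1}(\iota)$.
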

\begin{proof} It is easy to see that, 
if $E$ is a spectral sequence and $e:E\to E$ is an idempotent morphism of spectral sequences, then $E$ can be decomposed into a sum of spectral sequences $E=\Ker(e)\oplus \Im(e).$ In other words, the category of spectral sequences is Karoubian. Therefore, if $E'$ is a retract of a spectral sequence $E,$ then $E$ is a direct sum of spectral sequences $E\cong E'\oplus E''.$

If $n=0,$ then there is nothing to prove. Assume $n\geq 1.$ Chose a sequence of nested minimal models
\begin{equation}
G=M_{r_0}(G)\supset M_{r_1}(G) \supset M_{r_2}(G) \supset \dots \supset M_{r_n}(G)   
\end{equation}
with retractions $\rho_j:M_{r_j}(G)\to M_{r_{j+1}}(G)$ for $0\leq j\leq n-1.$ Since $M_{r_{j+1}}(G)$ is a retract in $M_{r_j}(G),$ we obtain that  $E(M_{r_{j+1}})$ is a retract of $E(M_{r_j}).$ Hence $E(M_{r_j})\cong E(G)_{[j+1]} \oplus  E(M_{r_{j+1}}).$ Then we obtain a decomposition \eqref{eq:decomp} with the property \eqref{eq:decomp2}. Since the embedding $M_{r_{j+1}}(G) \hookrightarrow M_{r_j}(G)$ is an $r_{j+1}$-homotopy equivalence, we obtain that $E^{r_{j+1}+1}(M_{r_{j+1}}(G))\cong E^{r_{j+1}+1}(M_{r_j}(G)).$ Therefore $E^{r_{j+1}+1}(G)_{[j+1]}=0.$
\end{proof}
\begin{remark}
We do not claim that the decomposition in Theorem \ref{theorem:decomposition_magnitude-path} is natural. 
\end{remark}

\section{Spectral homology}

\subsection{Reminder on spectral sequence of a \texorpdfstring{$\ZZ$}{}-filtered chain complex} 

In this subsection we remind an  explicit construction of pages of the spectral sequence associated with a filtered chain complex that can be found in \cite[Ch.XV,\S 1]{cartan1999homological}. We will use the following lemma from the book of Cartan-Eilenberg. 
\begin{lemma}[{\cite[Ch.XV,Lemma 1.1]{cartan1999homological}}]\label{lemma:CE} For any commutative diagram of $\KK$-modules of the following
form, in which the row is exact
\begin{equation}
\begin{tikzcd}
&B \ar[d,"\varphi"] \ar[dr,"\psi"] & \\
A'\ar[r,"\varphi'"']\ar[ru] &A \ar[r,"\eta"'] &A'
\end{tikzcd}
\end{equation}
there is an inclusion $\Im \: \varphi'\subseteq \Im\: \varphi$ and 
the map $\eta$ induces an isomorphism
\begin{equation}
\Im\: \psi \cong \Im \: \varphi/\Im \: \varphi'.
\end{equation}
\end{lemma}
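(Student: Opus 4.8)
The plan is to deduce both assertions from the two commutativity relations encoded by the diagram, together with the exactness of the row, via the first isomorphism theorem. First I would name the unlabelled diagonal arrow, say $\sigma:A'\to B$. Commutativity of the diagram then amounts to the two relations $\varphi'=\varphi\sigma$ and $\psi=\eta\varphi$.

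The inclusion follows at once from the first relation: $\Im\varphi'=\varphi(\sigma(A'))\subseteq\varphi(B)=\Im\varphi$, so in particular $\Im\varphi/\Im\varphi'$ is a well-defined quotient module. For the isomorphism I would restrict $\eta$ to the submodule $\Im\varphi\subseteq A$ and apply the first isomorphism theorem. Using $\psi=\eta\varphi$, the image of $\eta|_{\Im\varphi}$ is $\eta(\varphi(B))=\psi(B)=\Im\psi$. Its kernel is $\Im\varphi\cap\Ker\eta$; by exactness of the row at $A$ we have $\Ker\eta=\Im\varphi'$, and since $\Im\varphi'\subseteq\Im\varphi$ this intersection is exactly $\Im\varphi'$. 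Hence $\eta$ induces an isomorphism $\Im\varphi/\Im\varphi'\cong\Im\psi$, which is the assertion.

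The whole argument is a formal diagram chase in the category of $\KK$-modules, so I do not expect a genuine obstacle. The only points requiring care are bookkeeping: correctly reading off which composites the commutative diagram forces to coincide, and noting that it is exactness at $A$ --- not merely the complex identity $\eta\varphi'=0$ --- that guarantees $\Im\varphi\cap\Ker\eta$ is no larger than $\Im\varphi'$.
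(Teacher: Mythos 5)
Your proof is correct and is the standard argument: the paper itself cites this lemma from Cartan--Eilenberg without reproducing a proof, and your chase (restrict $\eta$ to $\Im\,\varphi$, compute its image via $\psi=\eta\varphi$ and its kernel via exactness at $A$, then apply the first isomorphism theorem) is exactly the intended one. Note that the inclusion as printed in the statement, $\Im\,\varphi\subseteq\Im\,\varphi'$, is a typo; the correct inclusion --- and the one needed for the quotient $\Im\,\varphi/\Im\,\varphi'$ to make sense --- is $\Im\,\varphi'\subseteq\Im\,\varphi$, which is precisely what you derive from $\varphi'=\varphi\sigma$.
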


Let $\KK$ be a commutative ring and $C$ be a $\ZZ$-filtered chain complex over $\KK$ with an increasing filtration
\begin{equation}
\dots \subseteq F_{-1}C \subseteq F_0C\subseteq F_1C \subseteq \dots \subseteq C.
\end{equation}
For the sake of simplicity we set 
\begin{equation}
F_\ell = F_\ell C.
\end{equation}
Consider the associated spectral sequence $E.$ Here we will not discuss the convergence of the spectral sequence, so we do not need any assumptions on the filtration. The first page of this spectral sequence is described as
\begin{equation}
E^1_{\ell,n-\ell} = H_n(F_\ell/F_{\ell-1}).
\end{equation}
Let us describe all pages as subquotients of the first page. For any natural $r\geq 1$ we consider the following  short exact sequences induced by the embeddings
\begin{equation}\label{eq:ses_of_comp1}
0 \longrightarrow F_{\ell-1}/F_{\ell-r} \longrightarrow F_\ell/F_{\ell-r} \longrightarrow F_\ell/F_{\ell-1} \longrightarrow 0
\end{equation}
and 
\begin{equation}\label{eq:ses_of_comp2}
0 \longrightarrow F_\ell/F_{\ell-1} \longrightarrow F_{\ell+r-1}/F_{\ell-1} \longrightarrow F_{\ell+r-1}/F_\ell \longrightarrow 0,
\end{equation}
and the associated long exact sequences of homology. Further we set
\begin{equation}
\begin{split}
Z^r_{\ell,n-\ell} &= \Im( 
H_n(F_\ell/F_{\ell-r})\longrightarrow  H_n(F_\ell/F_{\ell-1}) )\\
&=\Ker( 
H_n(F_\ell/F_{\ell-1}) \longrightarrow H_{n-1}(F_{\ell-1}/F_{\ell-r})).
\end{split}
\end{equation}
and
\begin{equation}
\begin{split}
B^r_{\ell,n-\ell} &= \Ker(H_n(F_\ell/F_{\ell-1}) \longrightarrow  H_n(F_{\ell+r-1}/F_{\ell-1}))\\
& = \Im(H_{n+1}(F_{\ell+r-1}/F_\ell) \longrightarrow  H_n(F_\ell/F_{\ell-1})).
\end{split}
\end{equation}
Then the $r$-th page of the spectral sequence can be described as
\begin{equation}
E^r_{\ell,n-\ell} = \frac{Z^r_{\ell,n-\ell}}{B^r_{\ell,n-\ell}}.
\end{equation}
Using the morphism of short exact sequences,
\begin{equation}
\begin{tikzcd}
0 \ar[r] & F_\ell/F_{\ell-r} \ar[r]\ar[d,twoheadrightarrow] & F_{\ell+r-1}/F_{\ell-r}\ar[r]\ar[d,twoheadrightarrow] & F_{\ell+r-1}/F_\ell\ar[r]\ar[d,equal] & 0\\
0 \ar[r] & F_\ell/F_{\ell-1} \ar[r] & F_{\ell+r-1}/F_{\ell-1}\ar[r] & F_{\ell+r-1}/F_\ell\ar[r] & 0
\end{tikzcd}
\end{equation}
we obtain a commutative diagram, in which the row is exact.
\begin{equation}
\begin{tikzcd}
& H_n(F_\ell/F_{\ell-r}) \ar[d] \ar[dr] & \\
H_{n+1}(F_{\ell+r-1}/F_\ell) \ar[r]\ar[ru] & H_n(F_\ell/F_{\ell-1})\ar[r] & H_n(F_{\ell+r-1}/F_{\ell-1}) 
\end{tikzcd}
\end{equation}
Applying Lemma \ref{lemma:CE} to this diagram,  
we obtain another description of the $E^r$ (see \cite[Ch.XV,\S 1,(8)]{cartan1999homological})
\begin{equation}
E^r_{\ell,n-\ell} \cong  \Im( H_n(F_\ell /F_{\ell-r} ) \longrightarrow H_n( F_{\ell+r-1}/F_{\ell - 1} )).
\end{equation}

\subsection{Spectral homology of an
\texorpdfstring{$\RR$}{}-filtered chain complex}\label{subsection:spectral_homology}
A subset of the set of real numbers $I\subseteq \RR$ is called an interval if it is convex. We say that a interval $L$ is left infinite, if for any $i\in I$ and $r<i$ we have $r\in L.$ There are four types of left infinite intervals: $(-\infty,a),(-\infty,a], \emptyset$ and $\RR.$ Any non-empty interval $I\subseteq \RR$ can be uniquely presented as the difference of left infinite intervals $I=R_I\setminus L_I,$ where 
\begin{equation}
R_I=\{r\in \RR\mid \exists i\in I\ r\leq i\}, \hspace{1cm} L_I=\{r\in \RR \mid \forall i\in I\ r<i \}.
\end{equation} 
For example $R_{\{\ell\}}=(-\infty,\ell]$ and $L_{\{\ell\}}=(-\infty,\ell).$ We define an order on the set of non-empty intervals such that 
$I \preccurlyeq J$
if and only
if $R_I\subseteq R_J$ and $L_I\subseteq L_J.$ For example, $[0,2] \preccurlyeq [1,3].$

For an interval $I=R\setminus L$ and a non-negative real number $r\geq 0$ we also set 
$I+r = \{i+r\mid i\in I\}$
and
\begin{equation}
I_{-r} = R\setminus (L-r), \hspace{1cm} I^{+r} = (R+r)\setminus L.    
\end{equation}
Note that 
\begin{equation}
 I_{-r}  \preccurlyeq I \preccurlyeq I^{+r}.
\end{equation}

Assume that $C$ is an $\RR$-filtered chain complex over a ring $\KK$. So for any $r\in \RR$ we have a chain subcomplex $F_rC\subseteq C$ such that  $r<r'$ implies $F_rC\subseteq F_{r'}C.$ When $C$ is fixed we will omit it in notations $F_r=F_rC.$ For a left infinite interval $L$ we set 
\begin{equation}
F_L = \bigcup_{r\in L} F_r
\end{equation}
Further, for a non-empty interval $I=R\setminus L$ we set
\begin{equation}
C_I = F_R/F_L.
\end{equation}
If $I \preccurlyeq J,$ we have a morphism 
\begin{equation}
C_I \longrightarrow C_{J}
\end{equation}
induced by inclusions $F_{R_I}C\subseteq F_{R_J}C$ and $F_{L_I}C\subseteq F_{L_J}C$. Moreover, if $L_I=L_J,$ this morphism is a monomorphism, and, if $R_I=R_J,$ it is an epimorphism. If we have three distinct left infinite  intervals $R\supset S \supset L,$ then these morphisms define a short exact sequence of chain complexes
\begin{equation}\label{eq:ses}
0 \longrightarrow C_{S\setminus L} \longrightarrow C_{R\setminus L} \longrightarrow C_{R\setminus S} \longrightarrow 0.
\end{equation}

The $0$-spectral homology of $C$ is defined as follows
\begin{equation}\label{eq:SH^0}
\SH^{0}_{n,I}(C) = H_n(C_I).
\end{equation}
Further for any $r\geq 0$ we will define $r$-spectral homology  $\SH^r_{n,I}(C)$ as a subquotient of $\SH^0_{n,I}(C)$

For any $r\geq 0$ we consider short exact sequences of chain complexes of the form \eqref{eq:ses} similar to \eqref{eq:ses_of_comp1}, \eqref{eq:ses_of_comp2}
\begin{equation}\label{eq:ses_of_comp1'}
0
\longrightarrow
C_{I_{-r} \setminus I}
\longrightarrow 
C_{I_{-r}} 
\longrightarrow C_I \longrightarrow 0, 
\end{equation}
\begin{equation} \label{eq:ses_of_comp2'}
0 \longrightarrow C_I \longrightarrow C_{I^{+r}} \longrightarrow C_{I^{+r}\setminus I} \longrightarrow 0
\end{equation}
and the associated long exact sequences of homology. Then we define $r$-spectral cycles and $r$-spectral boundaries as 
\begin{equation}
\begin{split}
{\rm SZ}^r_{n,I}(C)&= \Im( H_n(C_{I_{-r}}) 
\longrightarrow H_n(C_I))\\
&=\Ker( H_n(C_I) \longrightarrow H_{n-1}(C_{I_{-r}\setminus I}))
\end{split}
\end{equation}
and 
\begin{equation}
\begin{split}
{\rm SB}^r_{n,I}(C)& = \Ker( H_n(C_I) \longrightarrow H_n( C_{I^{+r}})) \\
& = \Im(H_{n+1}(C_{I^{+r}\setminus I}) \longrightarrow H_n(C_I)).
\end{split}
\end{equation}

\begin{proposition}\label{prop:second_descr_SH}
There is an inclusion 
\begin{equation}
{\rm SB}^r_{n,I}(C) \subseteq {\rm SZ}^r_{n,I}(C).
\end{equation}
Moreover, if we set
\begin{equation}
\SH^r_{n,I}(C) = {\rm SZ}^r_{n,I}(C)/{\rm SB}^r_{n,I}(C),
\end{equation}
then the map $C_I\to C_{I^{+r}}$ induces an isomorphism 
\begin{equation}
\SH^r_{n,I}(C) \cong \Im( H_n(C_{I_{-r}}) \longrightarrow H_n(C_{I^{+r}})).
\end{equation}
\end{proposition}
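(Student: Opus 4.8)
The plan is to mimic exactly the Cartan–Eilenberg argument reproduced in the previous subsection, but carried out with the interval notation in place of the integer-indexed filtration. The main point is to produce a commutative diagram with an exact row to which Lemma \ref{lemma:CE} applies, with $B = H_n(C_{I_r})$, $A = H_n(C_I)$, $A' = H_n(C_{I^r})$ (so that $\Im\varphi' = {\rm SZ}^r_{n,I}(C)$ read inside $H_n(C_I)$ in one incarnation and ${\rm SB}^r_{n,I}(C)$ appears as the relevant kernel), and $\Im\psi$ the desired image $\Im(H_n(C_{I_r})\to H_n(C_{I^r}))$.

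First I would set up the analogue of the morphism of short exact sequences used in the integer case. Writing $I = R\setminus L$, one has the chain of left infinite intervals $L - r \subseteq L \subseteq R \subseteq R + r$, which gives the morphism of short exact sequences
\begin{equation}
\begin{tikzcd}
0 \ar[r] & C_{R\setminus (L-r)} \ar[r]\ar[d,twoheadrightarrow] & C_{(R+r)\setminus(L-r)}\ar[r]\ar[d,twoheadrightarrow] & C_{(R+r)\setminus R}\ar[r]\ar[d,equal] & 0\\
0 \ar[r] & C_{R\setminus L} \ar[r] & C_{(R+r)\setminus L}\ar[r] & C_{(R+r)\setminus R}\ar[r] & 0
\end{tikzcd}
\end{equation}
that is, $C_{I_r}\twoheadrightarrow C_I$ and $C_{I^r\setminus I}=C_{(R+r)\setminus R}$ on the right; here I use that $C_{S\setminus L}\to C_{R\setminus L}$ is a monomorphism when the left endpoint agrees and an epimorphism when the right endpoint agrees, together with the short exact sequence \eqref{eq:ses}. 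Taking homology and using the connecting maps from the long exact sequences associated to \eqref{eq:ses_of_comp1'} and \eqref{eq:ses_of_comp2'}, I obtain the commutative diagram with exact row
\begin{equation}
\begin{tikzcd}
& H_n(C_{I_r}) \ar[d] \ar[dr] & \\
H_{n+1}(C_{I^r\setminus I}) \ar[r]\ar[ru] & H_n(C_I)\ar[r] & H_n(C_{I^r}).
\end{tikzcd}
\end{equation}
The row being exact is precisely the long exact sequence of \eqref{eq:ses_of_comp2'} at the spot $H_n(C_I)$, and commutativity of the left triangle comes from the morphism of the two short exact sequences above (the map $C_{I^r\setminus I}[1]\to C_{I_r}$ of connecting maps), while the right triangle commutes because $C_{I_r}\to C_I\to C_{I^r}$ factors the map $C_{I_r}\to C_{I^r}$.

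Now Lemma \ref{lemma:CE} applies with $B=H_n(C_{I_r})$, $\varphi\colon B\to H_n(C_I)$, $\varphi'\colon H_{n+1}(C_{I^r\setminus I})\to H_n(C_I)$, $\eta\colon H_n(C_I)\to H_n(C_{I^r})$, and $\psi=\eta\varphi\colon B\to H_n(C_{I^r})$. The lemma yields $\Im\varphi\subseteq\Im\varphi'$; but by the two equivalent descriptions of ${\rm SZ}^r$ and ${\rm SB}^r$ in the text, $\Im\varphi = {\rm SZ}^r_{n,I}(C)$ and $\Im\varphi' = {\rm SB}^r_{n,I}(C)$, which is the claimed inclusion ${\rm SB}^r_{n,I}(C)\subseteq{\rm SZ}^r_{n,I}(C)$. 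The lemma also gives $\Im\psi\cong\Im\varphi/\Im\varphi' = {\rm SZ}^r_{n,I}(C)/{\rm SB}^r_{n,I}(C) = \SH^r_{n,I}(C)$, and $\Im\psi = \Im(H_n(C_{I_r})\to H_n(C_{I^r}))$ by definition of $\psi$. The only thing to be careful about is that the four intervals $I_r$, $I$, $I^r$, $I^r\setminus I$, and the auxiliary $(R+r)\setminus(L-r)$ are all genuinely non-empty (or handled as the zero complex when empty) so that the notation $C_{(-)}$ is defined and the short exact sequence \eqref{eq:ses} is available; since $r\geq 0$ we have $I\subseteq I_r$ and $I\subseteq I^r$, so non-emptiness of $I$ guarantees this, and the degenerate cases where some term vanishes only make the statement easier. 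I expect the bookkeeping of which arrows are mono/epi and the identification of the connecting homomorphisms with the stated $\Im/\Ker$ descriptions to be the only place demanding care; the rest is a direct transcription of the $\ZZ$-filtered computation.
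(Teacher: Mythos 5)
Your proof is correct and is essentially the paper's own argument: your morphism of short exact sequences with middle term $C_{(R+r)\setminus(L-r)}$ is exactly the paper's diagram (there the middle term is written $C_{I^r\cup I_r}$, which is the same complex), and both conclude by applying Lemma \ref{lemma:CE} to the resulting commutative diagram with exact row. The only slip is directional: with your labels $\Im\,\varphi={\rm SZ}^r_{n,I}(C)$ and $\Im\,\varphi'={\rm SB}^r_{n,I}(C)$, the inclusion you need is $\Im\,\varphi'\subseteq\Im\,\varphi$, which is what the Cartan--Eilenberg lemma actually gives (the paper's printed statement of Lemma \ref{lemma:CE} has the two images transposed, and you copied that transposition while nevertheless drawing the correct conclusion ${\rm SB}^r_{n,I}(C)\subseteq{\rm SZ}^r_{n,I}(C)$).
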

\begin{proof}
Using that $I^{+r}\setminus I=(I^{+r} \cup I_{-r}) \setminus I_{-r},$ we obtain that there is a morphism of short exact sequences of the form \eqref{eq:ses}
\begin{equation}
\begin{tikzcd}
0 \ar[r] & C_{I_{-r}} \ar[r]\ar[d,twoheadrightarrow] &
C_{I^{+r}\cup I_{-r} }\ar[r]\ar[d,twoheadrightarrow] & 
C_{I^{+r}\setminus I} \ar[r]\ar[d,equal] & 0\\
0 \ar[r] & C_I \ar[r] & C_{I^{+r}} \ar[r] & C_{I^{+r}\setminus I} \ar[r] & 0
\end{tikzcd}
\end{equation}
Therefore there is an commutative diagram, in which the row is exact.
\begin{equation}
\begin{tikzcd}
& H_n(C_{I_{-r}}) \ar[d] \ar[dr] & \\
H_{n+1}(C_{I^{+r}\setminus I}) \ar[r]\ar[ru] & H_n(C_I)\ar[r] & H_n(C_{I^{+r}}) 
\end{tikzcd}
\end{equation}
Then the assertion follows from Lemma \ref{lemma:CE}.
\end{proof}

The $\KK$-module $\SH^r_{n,I}(C)$ defined in Proposition \ref{prop:second_descr_SH} is called \emph{$r$-spectral homology} of the $\RR$-filtered chain complex $C.$ A morphism of $\RR$-filtered chain complexes $f:C\to C'$ is a chain map such that $f(F_\ell C)\subseteq F_\ell C'.$ It is easy to see that the construction is natural and defines a functor
\begin{equation}
\SH^r_{n,I} : \RR\text{-}{\sf FiltCh(\KK)} \longrightarrow {\sf Mod}(\KK).
\end{equation}

\begin{corollary}\label{cor:SH-good-formula}
There are isomorphisms 
\begin{equation}
\begin{split}
{\rm SZ}^r_{n,I}(C)&= \Im(
\SH^0_{n,I_{-r}}(C) \longrightarrow \SH^0_{n,I}(C)),
\\
{\rm SB}^r_{n,I}(C)& = \Ker( \SH^0_{n,I}(C) \longrightarrow \SH^0_{n,I^{+r}}(C)),
\\
 \SH^r_{n,I}(C) &\cong \Im( \SH^0_{n,I_{-r}}(C) \longrightarrow \SH^0_{n,I^{+r}}(C)).
\end{split}
\end{equation}

\end{corollary}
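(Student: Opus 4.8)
The plan is to derive all three identities by unwinding the definitions of ${\rm SZ}^r_{n,I}(C)$, ${\rm SB}^r_{n,I}(C)$ and $\SH^r_{n,I}(C)$, using the defining equality $\SH^0_{n,J}(C)=H_n(C_J)$ together with the explicit description — recorded just before \eqref{eq:ses} — of the canonical morphisms $C_I\to C_J$ of the order $\preccurlyeq$: they are epimorphisms when $R_I=R_J$ and monomorphisms when $L_I=L_J$.

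First I would treat the formula for ${\rm SZ}^r$. Since $R_{I_r}=R_I$, the canonical morphism $C_{I_r}\to C_I$ is an epimorphism, and inspection of how the short exact sequence \eqref{eq:ses_of_comp1'} is assembled from \eqref{eq:ses} shows that it is exactly the third map of that sequence. Hence the map $H_n(C_{I_r})\to H_n(C_I)$ used to define ${\rm SZ}^r_{n,I}(C)$ is the map induced by the canonical morphism attached to $I_r\preccurlyeq I$, i.e.\ the canonical map $\SH^0_{n,I_r}(C)\to\SH^0_{n,I}(C)$. Substituting $\SH^0_{n,I_r}(C)=H_n(C_{I_r})$ and $\SH^0_{n,I}(C)=H_n(C_I)$ gives the first identity verbatim. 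The argument for ${\rm SB}^r$ is the mirror image: since $L_{I^r}=L_I$, the canonical morphism $C_I\to C_{I^r}$ is a monomorphism and is the first map of \eqref{eq:ses_of_comp2'}, so the map $H_n(C_I)\to H_n(C_{I^r})$ in the definition of ${\rm SB}^r_{n,I}(C)$ is the canonical map $\SH^0_{n,I}(C)\to\SH^0_{n,I^r}(C)$, and the second identity drops out.

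For the third identity I would invoke Proposition \ref{prop:second_descr_SH}, which already gives $\SH^r_{n,I}(C)\cong\Im\bigl(H_n(C_{I_r})\to H_n(C_{I^r})\bigr)$ with the displayed arrow equal to the composite $H_n(C_{I_r})\to H_n(C_I)\to H_n(C_{I^r})$ of the two maps just identified. It then remains to recognise this composite as the canonical map $\SH^0_{n,I_r}(C)\to\SH^0_{n,I^r}(C)$ attached to $I_r\preccurlyeq I^r$, which is immediate from functoriality of $J\mapsto C_J$ on the poset of non-empty intervals: for $I_r\preccurlyeq I\preccurlyeq I^r$ the canonical morphism $C_{I_r}\to C_{I^r}$ factors through $C_I$, and applying $H_n$ preserves this factorisation. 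Substituting $\SH^0=H_n$ one last time finishes the proof.

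The only point that needs genuine attention — and it is bookkeeping rather than a real obstacle — is the identification, in each of \eqref{eq:ses_of_comp1'} and \eqref{eq:ses_of_comp2'}, of the relevant structure map with the canonical morphism of the order $\preccurlyeq$, so that Proposition \ref{prop:second_descr_SH} and functoriality may be combined; this is guaranteed by the equalities $R_{I_r}=R_I$ and $L_{I^r}=L_I$ together with the description of the canonical morphisms as quotients and inclusions.
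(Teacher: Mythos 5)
Your proposal is correct and follows the same route as the paper: the corollary is obtained simply by substituting the definition $\SH^0_{n,J}(C)=H_n(C_J)$ into the defining formulas for ${\rm SZ}^r_{n,I}$ and ${\rm SB}^r_{n,I}$ and into the isomorphism of Proposition \ref{prop:second_descr_SH}, after noting that the maps in \eqref{eq:ses_of_comp1'} and \eqref{eq:ses_of_comp2'} are the canonical morphisms attached to $I_r\preccurlyeq I\preccurlyeq I^r$. The bookkeeping you flag (via $R_{I_r}=R_I$ and $L_{I^r}=L_I$) is exactly the verification the paper leaves implicit.
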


\begin{proposition}\label{prop:spectral=spectral}
Let $C$ be a $\ZZ$-filtered chain complex. We extend the $\ZZ$-filtration to an $\RR$-filtration on $C$ by the formula 
\begin{equation}
F_sC = F_{\lfloor s\rfloor}C.
\end{equation}
Then for a natural number $r\geq 1$ we have
\begin{equation}
E^{r}_{n,\ell-n}(C) = 
\SH^{r-1}_{n,(\ell-1,\ell]}(C)=\SH^{r-1}_{n,\{\ell\}}(C).  
\end{equation}
\end{proposition}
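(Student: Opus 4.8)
The plan is to unwind, for the floor filtration $F_rC=F_{\lfloor r\rfloor}C$, the subquotient complexes $C_I$ attached to the intervals $\{\ell\}$ and $(\ell-1,\ell]$ and their enlargements $I_{r-1}$, $I^{r-1}$, and then simply match the resulting subquotients of $H_n(F_\ell C/F_{\ell-1}C)$ with the Cartan--Eilenberg formulas for $Z^r$, $B^r$, $E^r$ recalled above. The only arithmetic we need is that, since $\ell\in\ZZ$, a left-infinite interval with integer right endpoint $a$ contributes $F_{(-\infty,a]}C=F_aC$ and $F_{(-\infty,a)}C=\bigcup_{s<a}F_{\lfloor s\rfloor}C=F_{a-1}C$; in particular $F_{(-\infty,\ell)}C=F_{\ell-1}C$, $F_{(-\infty,\ell-1]}C=F_{\ell-1}C$, $F_{(-\infty,\ell-r+1)}C=F_{\ell-r}C$, $F_{(-\infty,\ell-r]}C=F_{\ell-r}C$ and $F_{(-\infty,\ell+r-1]}C=F_{\ell+r-1}C$.

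First I would record the relevant complexes. From $R_{\{\ell\}}=(-\infty,\ell]$, $L_{\{\ell\}}=(-\infty,\ell)$ one gets $C_{\{\ell\}}=F_\ell C/F_{\ell-1}C$; from $\{\ell\}_{r-1}=(-\infty,\ell]\setminus(-\infty,\ell-r+1)$ one gets $C_{\{\ell\}_{r-1}}=F_\ell C/F_{\ell-r}C$; and from $\{\ell\}^{r-1}=(-\infty,\ell+r-1]\setminus(-\infty,\ell)$ one gets $C_{\{\ell\}^{r-1}}=F_{\ell+r-1}C/F_{\ell-1}C$. Carrying out the identical computation for $I=(\ell-1,\ell]$ (where $R_I=(-\infty,\ell]$, $L_I=(-\infty,\ell-1]$, so $I_{r-1}=(-\infty,\ell]\setminus(-\infty,\ell-r]$ and $I^{r-1}=(-\infty,\ell+r-1]\setminus(-\infty,\ell-1]$) yields exactly the same three chain complexes together with the same inclusion-induced maps among them. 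Since ${\rm SZ}^{r-1}_{n,I}$ and ${\rm SB}^{r-1}_{n,I}$, hence $\SH^{r-1}_{n,I}$, depend only on these data, the second equality $\SH^{r-1}_{n,(\ell-1,\ell]}(C)=\SH^{r-1}_{n,\{\ell\}}(C)$ follows immediately.

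For the first equality I would compare directly with the reminder subsection. By definition ${\rm SZ}^{r-1}_{n,\{\ell\}}(C)=\Im\big(H_n(C_{\{\ell\}_{r-1}})\to H_n(C_{\{\ell\}})\big)=\Im\big(H_n(F_\ell C/F_{\ell-r}C)\to H_n(F_\ell C/F_{\ell-1}C)\big)$, which is precisely $Z^r_{\ell,n-\ell}(C)$; and ${\rm SB}^{r-1}_{n,\{\ell\}}(C)=\Ker\big(H_n(C_{\{\ell\}})\to H_n(C_{\{\ell\}^{r-1}})\big)=\Ker\big(H_n(F_\ell C/F_{\ell-1}C)\to H_n(F_{\ell+r-1}C/F_{\ell-1}C)\big)$, which is precisely $B^r_{\ell,n-\ell}(C)$; in both cases the maps agree because they are induced by the very same inclusions of terms of the filtration. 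The inclusion ${\rm SB}^{r-1}\subseteq{\rm SZ}^{r-1}$ needed to form the quotient is Proposition \ref{prop:second_descr_SH}. Hence $\SH^{r-1}_{n,\{\ell\}}(C)={\rm SZ}^{r-1}_{n,\{\ell\}}(C)/{\rm SB}^{r-1}_{n,\{\ell\}}(C)=Z^r_{\ell,n-\ell}(C)/B^r_{\ell,n-\ell}(C)=E^r_{\ell,n-\ell}(C)$, which is the page term written $E^r_{n,\ell-n}(C)$ in the indexing of the statement. The case $r=1$ degenerates correctly, since $\{\ell\}_0=\{\ell\}^0=\{\ell\}$ and $\SH^0_{n,\{\ell\}}(C)=H_n(F_\ell C/F_{\ell-1}C)=E^1_{\ell,n-\ell}(C)$.

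I do not expect a genuine obstacle; the whole argument is unwinding definitions. The one point deserving care is that the \emph{symmetric} enlargement $I\mapsto(I_{r-1},I^{r-1})$ defining $\SH^{r-1}$ must reproduce the \emph{asymmetric} window $F_{\ell-r}C\subseteq F_{\ell-1}C\subseteq F_\ell C\subseteq F_{\ell+r-1}C$ (spanning the indices from $\ell-r$ to $\ell+r-1$) occurring in the Cartan--Eilenberg description of $E^r$: this happens precisely because $\{\ell\}$ is a half-open unit interval with integer endpoints, so that $(-\infty,\ell-r+1)$ and $(-\infty,\ell-r]$ both collapse to $F_{\ell-r}C$ under the floor filtration — and it is exactly this asymmetry between the left end $\ell-r$ and the right end $\ell+r-1$ that makes the page index drop by one, $E^r\leftrightarrow\SH^{r-1}$. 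A secondary, automatic point is that the asserted identification is an honest equality of subquotients of $H_n(F_\ell C/F_{\ell-1}C)$, so no compatibility between separately chosen isomorphisms needs to be verified.
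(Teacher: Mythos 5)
Your proposal is correct and follows essentially the same route as the paper: compute $R_I$, $L_I$ and their shifts for both intervals under the floor filtration, observe that the resulting quotient complexes and inclusion-induced maps reproduce exactly the window $F_{\ell-r}\subseteq F_{\ell-1}\subseteq F_\ell\subseteq F_{\ell+r-1}$ of the Cartan--Eilenberg description, and conclude that the subquotients coincide. The paper states this more tersely (by saying the short exact sequences \eqref{eq:ses_of_comp1'}, \eqref{eq:ses_of_comp2'} coincide with \eqref{eq:ses_of_comp1}, \eqref{eq:ses_of_comp2}), but the content is identical.
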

\begin{proof}
For $I=(\ell-1,\ell]$ we obtain $R=(-\infty,\ell-1]$ and $L=(-\infty,\ell].$ Therefore, $F_R=F_\ell,$ $F_L=F_{\ell-1},$ $F_{L-(r-1)}=F_{\ell-r},$ $F_{R+(r-1)}=F_{\ell+r-1}.$ Then the short exact sequences \eqref{eq:ses_of_comp1}, \eqref{eq:ses_of_comp2} coincide with short exact sequences \eqref{eq:ses_of_comp1'}, \eqref{eq:ses_of_comp2'}. 

Similarly for $I=\{\ell\}.$ We obtain $R=(-\infty,\ell]$ and $L=(-\infty,\ell).$ Therefore $F_L=F_{(-\infty,\ell)}=F_{\ell-1},$ $F_{L-(r-1)} = F_{(-\infty,\ell-r+1)} = F_{\ell-r}.$ 
\end{proof}

\begin{proposition}\label{prop:SH_I=R}
Let $C$ be an $\RR$-filtered complex such that $C=\bigcup_\ell F_\ell C.$ Then for any $r\geq 0$ we have 
\begin{equation}
H_n(C) = \SH^r_{n,\RR}(C).
\end{equation}
\end{proposition}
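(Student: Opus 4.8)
The plan is to unwind the definition of $\SH^r_{n,\RR}(C)$ for the interval $I=\RR$ and to observe that the shift operations $I\mapsto I_r$ and $I\mapsto I^r$ fix $\RR$. First I would record the canonical presentation $\RR=R\setminus L$ with $R=\RR$ and $L=\emptyset$: indeed $R_\RR=\{s\in\RR\mid \exists i\in\RR,\ s\le i\}=\RR$, whereas $L_\RR=\{s\in\RR\mid \forall i\in\RR,\ s<i\}=\emptyset$, since no real number lies strictly below every real number. Using the hypothesis $C=\bigcup_\ell F_\ell C$ together with the convention $F_\emptyset=\bigcup_{r\in\emptyset}F_r=0$, this gives $F_{R_\RR}=\bigcup_{r\in\RR}F_r=C$ and $F_{L_\RR}=0$, hence $C_\RR=F_{R_\RR}/F_{L_\RR}=C$ and $\SH^0_{n,\RR}(C)=H_n(C_\RR)=H_n(C)$.

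Next I would compute the two auxiliary intervals attached to $\RR$. By definition $\RR_r=R\setminus(L-r)=\RR\setminus\emptyset=\RR$ and $\RR^r=(R+r)\setminus L=\RR\setminus\emptyset=\RR$, so both equal $\RR$. Consequently the natural morphism $C_{\RR_r}\to C_{\RR^r}$ of Proposition~\ref{prop:second_descr_SH} is induced by inclusions of filtration terms that are all equalities, i.e.\ it is the identity map of $C_\RR$.

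Finally I would invoke the image description $\SH^r_{n,I}(C)\cong \Im\big(H_n(C_{I_r})\to H_n(C_{I^r})\big)$ (Proposition~\ref{prop:second_descr_SH}, equivalently the last formula of Corollary~\ref{cor:SH-good-formula}) with $I=\RR$: since the relevant map is the identity of $H_n(C_\RR)$, its image is all of $H_n(C_\RR)=H_n(C)$, which is the claim. I do not anticipate any genuine obstacle here; the only point requiring care is the bookkeeping of the $R\setminus L$ presentation of $\RR$ together with the conventions $F_\emptyset=0$ and $F_\RR=\bigcup_r F_r$, after which the identification is immediate. (If one prefers to argue via the subquotient description $\SH^r_{n,\RR}(C)={\rm SZ}^r_{n,\RR}(C)/{\rm SB}^r_{n,\RR}(C)$, the same computation shows ${\rm SZ}^r_{n,\RR}(C)=H_n(C_\RR)$ and ${\rm SB}^r_{n,\RR}(C)=0$, yielding the same conclusion.)
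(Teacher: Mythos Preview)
Your proof is correct and follows exactly the same approach as the paper's own proof: both identify $R=\RR$, $L=\emptyset$, observe that the shifts $R+r=\RR$ and $L-r=\emptyset$ leave these unchanged, and then use $F_\RR=C$ (from the hypothesis) and $F_\emptyset=0$ to conclude. The paper's version is simply more terse, compressing your argument into two sentences.
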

\begin{proof}
In this case $R=R+r=\RR$ and $L=L-r=\emptyset.$ Therefore $F_R=F_{R+r}=C$ and $F_L=F_{L-r}=0.$ The assertion follows. 
\end{proof}

\begin{remark}[Naturalness in the interval and persistent spectral homology]\label{rem:nat_int}
Note that, if $I \preccurlyeq J,$ then $I^{+r} \preccurlyeq J^{+r}$ and $I_{-r} \preccurlyeq J_{-r}.$ Therefore, the commutative diagram 
\begin{equation}
\begin{tikzcd}
H_n(C_{I_{-r}})\ar[r]\ar[d] & H_n(C_{I^{+r}}) \ar[d] \\
 H_n(C_{J_{-r}}) \ar[r] & H_n(C_{J^{+r}})
\end{tikzcd}
\end{equation}
defines a morphism
\begin{equation}
f_{I,J}:\SH^r_{n,I}(C) \longrightarrow \SH^r_{n,J}(C).
\end{equation}
Moreover, it is easy to see that for $I\preccurlyeq J \preccurlyeq K$ we have $f_{J,K} f_{I,J}=f_{I,K}.$ In particular, it follows that we can consider persistent module ${\rm PSH}^r_n(C)$ such that 
\begin{equation}
{\rm PSH}^r_n(C)(\ell) = \SH^r_{n,(-\infty,\ell]}(C).
\end{equation}
\end{remark}

\begin{proposition}
For any $r,s\geq 0$ there is an isomorphism 
\begin{equation}
\SH^{r+s}_{n,I}(C)\cong \Im(\SH^r_{n,I_{-s}}(C) \longrightarrow \SH^r_{n,I^{+s}}(C)).
\end{equation}
Moreover, 
\begin{equation}
{\rm SB}^r_{n,I}(C) \subseteq  {\rm SB}^{r+s}_{n,I}(C), \hspace{1cm} {\rm SZ}^{r+s}_{n,I}(C)\subseteq {\rm SZ}^r_{n,I}(C),   
\end{equation}
and hence, $\SH^{r+s}_{n,I}(C)$ is a subquotient of $\SH^r_{n,I}(C).$
\end{proposition}
\begin{proof}
If $I=R\setminus L,$ then  $(I_{-s})_{-r}=I_{-(s+r)},$ $(I^{+s})^r=I^{+(s+r)},$ $(I^{+s})_{-r}=(R+s)\setminus (L-r)$ and $(I_{-s})^{+r}=(R+r)\setminus (L-s).$ Then $\SH^r_{n,I_{-s}}(C)$ and $\SH^r_{n,I^{+s}}(C)$ are images of the horizontal arrows in the following commutative diagram. 
\begin{equation}
\begin{tikzcd}
H_n(C_{I_{-(s+r)}}) \ar[r] \ar[d] & H_n(C_{(R+r)\setminus (L-s)})  \ar[d] \\
H_n(C_{(R+s)\setminus (L-r)}) \ar[r] & H_n(C_{I^{+s+r}})
 \end{tikzcd}       
\end{equation} 
Therefore the image of $\SH^r_{n,I_{-s}}(C)\to \SH^r_{n,I^{+s}}(C)$ is equal to the image of the diagonal map $H_n(C_{I_{-(s+r)}}) \to H_n(C_{I^{+(s+r)}}).$ 

The inclusion ${\rm SB}^r_{n,I}(C) \subseteq  {\rm SB}^{r+s}_{n,I}(C)$ follows from the fact that the map $C_{I} \to C_{I^{+(r+s)}}$ factors as $C_I \to C_{I^{+r}} \to C_{I^{+(r+s)}}.$ The second inclusion can be proved similarly. 
\end{proof}

\subsection{\texorpdfstring{$r$}{}-homotopy of \texorpdfstring{$\RR$}{}-filtered chain complexes} 

Similarly to \cite[Ch.XV,\S 3]{cartan1999homological} for $r\geq 0$ we say that two morphisms of $\RR$-filtered chain complexes  $f,f':C\to C'$ are $r$-homotopic, if there is a morphism of graded modules of degree one  $h:C_*\to C'_{*+1}$ such that $\partial h + h \partial = f-f'$ and $h(F_\ell C) \subseteq F_{\ell+r}C'.$

\begin{proposition}[{cf. \cite[Ch.XV,Prop.3.1]{cartan1999homological}}]\label{prop:r-homotopic-R-filtered}
Any $r$-homotopic morphisms of $\RR$-filtered chain complexes $f,f':C\to C'$ induce equal morphisms on the $r$-spectral homology
\begin{equation}
\SH_{n,I}^r(f)=\SH_{n,I}^r(f') : \SH_{n,I}^r(C) \longrightarrow \SH_{n,I}^r(C').
\end{equation}
\end{proposition}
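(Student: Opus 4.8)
The statement to prove is Proposition~\ref{prop:r-homotopic-R-filtered}: $r$-homotopic morphisms of $\RR$-filtered chain complexes induce equal maps on $r$-spectral homology. The plan is to imitate the classical argument of Cartan--Eilenberg \cite[Ch.XV,Prop.3.1]{cartan1999homological}, using the description of $\SH^r_{n,I}(C)$ as an image provided by Proposition~\ref{prop:second_descr_SH}, namely
\[
\SH^r_{n,I}(C) \cong \Im\bigl( H_n(C_{I_r}) \longrightarrow H_n(C_{I^r}) \bigr).
\]
The key point is that the chain homotopy $h$ with $h(F_\ell C)\subseteq F_{\ell+r}C'$ does \emph{not} respect the filtration degree on the nose, but shifts it by $r$; this is exactly compensated by the fact that $\SH^r$ compares the \emph{wider} intervals $I_r$ (on the source end) and $I^r$ (on the target end), which differ from $I$ by $r$ in the two opposite directions. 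So the strategy is: track where $h$ sends the various quotients $F_R/F_L$, observe that the resulting maps land in the appropriate places, and conclude that $f$ and $f'$ agree after passing to the image.

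\textbf{Step 1: $h$ descends to the relevant subquotients.} Write $I = R\setminus L$, so $I_r = R\setminus(L-r)$ and $I^r = (R+r)\setminus L$. Since $h(F_s C)\subseteq F_{s+r}C'$ for all $s$, passing to unions gives $h(F_L C)\subseteq F_{L+r}C'$ and, in particular, $h(F_{L-r}C)\subseteq F_L C'$ and $h(F_R C)\subseteq F_{R+r}C'$. Consequently $h$ induces a degree-one map
\[
\bar h : C_{I_r} = F_R C / F_{L-r}C \longrightarrow F_{R+r}C'/F_L C' = C'_{I^r}.
\]
The identity $\partial h + h\partial = f - f'$ is preserved under this passage to subquotients, because $f$ and $f'$ are genuine morphisms of $\RR$-filtered complexes (they respect the filtration degree exactly), and $f,f':C_{I_r}\to C'_{I_r}$, $f,f':C_{I^r}\to C'_{I^r}$ are the maps induced on those quotients. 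Here I must be a little careful: $\bar h$ is a homotopy between the two composites $C_{I_r}\to C'_{I^r}$ obtained by $f$ (resp.\ $f'$) followed by the natural map $C'_{I_r}\to C'_{I^r}$ — equivalently by the natural map $C_{I_r}\to C_{I^r}$ followed by $f$ (resp.\ $f'$). This is the crux of the bookkeeping and the step I expect to be the main obstacle: making sure the source and target intervals are lined up correctly so that $\bar h$ really is defined and really is a homotopy of the composites we want, rather than of some ill-typed pair of maps.

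\textbf{Step 2: conclude on homology and then on the image.} From Step~1, $f$ and $f'$ induce the same map $H_n(C_{I_r})\to H_n(C'_{I^r})$, call it $\Phi$. Now consider the commutative square (up to the identification just made) relating the four groups $H_n(C_{I_r})$, $H_n(C_{I^r})$, $H_n(C'_{I_r})$, $H_n(C'_{I^r})$, with horizontal maps induced by $I_r\preccurlyeq I^r$ and vertical maps induced by $f$ (or $f'$). By Proposition~\ref{prop:second_descr_SH}, $\SH^r_{n,I}(C)$ is the image of the top horizontal map and $\SH^r_{n,I}(C')$ is the image of the bottom one, and $\SH^r_{n,I}(f)$ is the induced map on images. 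Since the composite around the square equals $\Phi$ regardless of whether we use $f$ or $f'$ for the vertical maps, the induced maps on images coincide: $\SH^r_{n,I}(f) = \SH^r_{n,I}(f')$. Finally, one remarks that everything in Step~1 and Step~2 was natural in $I$ with respect to $\preccurlyeq$, so the same argument shows $f$ and $f'$ agree on the persistent module ${\rm PSH}^r_n$ as well, though that is not needed for the statement as phrased.
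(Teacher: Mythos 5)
Your proposal is correct and follows essentially the same route as the paper: both use the image description $\SH^r_{n,I}(C)\cong \Im\bigl(H_n(C_{I_r})\to H_n(C_{I^r})\bigr)$ from Proposition \ref{prop:second_descr_SH} and exploit the fact that the $r$-shift of the homotopy $h$ is exactly absorbed by widening from $I_r$ to $I^r$. The only cosmetic difference is that the paper argues element-wise (showing a cycle in $F_R/F_{L-r}$ is sent by $f-f'$ to a boundary in $F'_{R+r}/F'_L$) rather than packaging $h$ as an induced chain homotopy $\bar h:C_{I_r}\to C'_{I^r}$ as you do.
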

\begin{proof}
We present $I$ as the difference of left infinite intervals $I=R\setminus L.$ We set $F_X=F_XC$ and $F'_X=F_XC'.$  Then
\begin{equation}
\begin{split}
\SH^r_{n,I}(C) &\cong \Im( \alpha_* : H_n(F_R/F_{L-r}) \longrightarrow H_n(F_{R+r}/F_L)),\\
\SH^r_{n,I}(C') &\cong \Im( \alpha'_* : H_n(F'_R/F'_{L-r}) \longrightarrow H_n(F'_{R+r}/F'_L)).
\end{split}
\end{equation} 
Here we denote by $\alpha$ and $\alpha'$ the morphisms of chain complexes induced by embeddings
\begin{equation}
\alpha:F_R/F_{L-r}\to F_{R+r}/F_L, \hspace{1cm}    \alpha':F'_R/F'_{L-r} \to F'_{R+r}/F'_L.
\end{equation}
Set $g=f-f'$ and consider the commutative diagram 
\begin{equation}
\begin{tikzcd}
H_n(F_R/F_{L-r})\ar[r,"\alpha_*"]\ar[d,"g_*"] & H_n(F_{R+r}/F_L) \ar[d,"g_*"] \\ 
H_n(F'_R/F'_{L-r})\ar[r,"\alpha'_*"] & H_n(F'_{R+r}/F'_L).
\end{tikzcd}
\end{equation}
Then it is sufficient to prove that $g_*\alpha=\alpha'g^*=0.$ Take an element $x\in (F_R)_n$ that represents an element from $H_n(F_R/F_{L-r}).$ Then $\partial (x)\in (F_{L-r})_{n-1}.$ Since $g=\partial h + h \partial,$ 
we have
\begin{equation}
g(x) =  \partial h(x) +  h \partial (x).
\end{equation}
Using that $h$ is an $r$-homotopy and $\partial(x)\in (F_{L-r})_{n-1},$ we obtain $h\partial(x)\in (F'_L)_n.$ Hence  $h\partial(x)$ represents a zero element in $H_n(F'_{R+r}/F'_L).$ Using that $h$ is an $r$-homotopy once again, we obtain $h(x)\in (F'_{R+r})_{n+1}.$ Therefore $\partial h(x)$ represents a zero element in $H_n(F'_{R+r}/F'_L)$ too. It follows that $g(x)$ represents zero element in $H_n(F'_{R+r}/F'_L).$ So we obtain  $g_*\alpha = \alpha'_*g_*=0.$
\end{proof}

\subsection{Spectral homology of a quasimetric space}

Let $\KK$ be a commutative ring and $X$ be a quasimetric space. Define the $\RR$-filtered reachability chain complex ${\rm RC}(X)$ similarly to the case of digraphs. Its $n$-th component is freely generated by tuples  $(x_0,\dots,x_n)$ without consecutive repetitions $x_i\neq x_{i+1}$ such that $d(x_i,x_{i+1})<+\infty.$ 
\begin{equation}
{\rm RC}_n(X) =\KK\cdot \{(x_0,\dots,x_n)\mid d(x_i,x_{i+1})<+\infty, x_i\ne x_{i+1}\}.
\end{equation}
In this complex we identify sequences with consecutive repetitions with zero. Then the differential is defined by the formula
\begin{equation}
\partial(x_0,\dots,x_n) = \sum_{i=0}^n(-1)^i (x_0,\dots,\hat x_i,\dots,x_n).
\end{equation}
If we set 
$L(x_0,\dots,x_n)=\sum_i d(x_i,x_{i+1}),$
the $\RR$-filtration is defined so that for a real number $r$  the component $F_r{\rm RC}_n(G)$ is generated by tuples $(x_0,\dots,x_n)$ such that $L(x_0,\dots,x_n)\leq r.$

For a non-empty interval $I,$ a real number $r\geq 0,$ and natural $n$ the spectral homology of $X$ is defined as the spectral homology of this $\RR$-filtered chain complex
\begin{equation}
\SH^r_{n,I}(X) = \SH^r_{n,I}( {\rm RC}(X)). 
\end{equation}
It is easy to see that these constructions are natural and they define functors
\begin{equation}
\SH^r_{n,I} : {\sf QMet} \longrightarrow {\sf Mod}(\KK).
\end{equation}
By Remark \ref{rem:nat_int} the spectral homology is natural by the interval. In particular, we can consider the persistent spectral homology ${\rm PSH}^r_n(X)$ which is a persistent module  such that ${\rm PSH}^r_n(X)=\SH^r_{n,[0,\ell]}(X).$

\begin{theorem}
Any $r$-homotopic morphisms of quasimetric spaces  $\varphi,\psi:X\to Y$ induce equal morphisms on $r$-spectral homology
\begin{equation}
\SH^r_{n,I}(\varphi)=\SH^r_{n,I}(\psi) : \SH^r_{n,I}(X) \longrightarrow \SH^r_{n,I}(Y)
\end{equation}
 for any $n$ and $I$.
\end{theorem}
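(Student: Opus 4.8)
### Proof proposal

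The plan is to reduce the statement about quasimetric spaces to Proposition~\ref{prop:r-homotopic-R-filtered}, which already handles $r$-homotopy for $\RR$-filtered chain complexes. First I would observe that the functor ${\rm RC}:{\sf QMet}\to \RR\text{-}{\sf FiltCh}(\KK)$ is well-defined on morphisms: a short map $\varphi:X\to Y$ sends a tuple $(x_0,\dots,x_n)$ with $d(x_i,x_{i+1})<+\infty$ to a tuple whose consecutive distances only decrease (or equal $0$, in which case we interpret the image as $0$ by the degeneracy convention), so $\varphi$ preserves the reachability generators and $L(\varphi(x_0),\dots,\varphi(x_n))\leq L(x_0,\dots,x_n)$, hence $\varphi({\rm RC}_\ell) \subseteq$ the appropriate filtration level. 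Thus it suffices to show: if $\varphi\sim_r\psi$, then ${\rm RC}(\varphi)$ and ${\rm RC}(\psi)$ are $r$-homotopic as morphisms of $\RR$-filtered chain complexes, and then invoke Proposition~\ref{prop:r-homotopic-R-filtered} together with functoriality of $\SH^r_{n,I}$.

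Next I would reduce to a single ``elementary'' step in the $r$-homotopy. By definition $\varphi\sim_r\psi$ means there is a chain $\varphi=\varphi_0,\varphi_1,\dots,\varphi_m=\psi$ of short maps $X\to Y$ such that for each $j$ either $d(\varphi_j,\varphi_{j+1})\leq r$ or $d(\varphi_{j+1},\varphi_j)\leq r$. Since being $r$-homotopic as chain maps is an equivalence relation (the composite/concatenation of $r$-homotopies is an $r$-homotopy, as one checks by adding the chain homotopies), it is enough to treat the case $m=1$, i.e.\ two short maps $\varphi,\psi:X\to Y$ with $d(\varphi(x),\psi(x))\leq r$ for all $x$ (the case $d(\psi(x),\varphi(x))\leq r$ being symmetric). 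So the crux is: construct an explicit chain homotopy $h:{\rm RC}_\bullet(X)\to {\rm RC}_{\bullet+1}(Y)$ with $\partial h+h\partial={\rm RC}(\psi)-{\rm RC}(\varphi)$ and $h(F_\ell)\subseteq F_{\ell+r}$.

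The natural candidate is the standard ``prism'' homotopy used to prove simplicial homotopy invariance: set
\begin{equation}
h(x_0,\dots,x_n)=\sum_{i=0}^n (-1)^i \bigl(\varphi(x_0),\dots,\varphi(x_i),\psi(x_i),\dots,\psi(x_n)\bigr),
\end{equation}
with the convention that any tuple appearing with a consecutive repetition is set to $0$ (which is exactly the convention already in force in ${\rm RC}$). One then checks by the usual bookkeeping that $\partial h + h\partial = {\rm RC}(\psi)-{\rm RC}(\varphi)$; the only subtlety relative to the classical computation is the degeneracy convention, but since ${\rm RC}(G)$ is defined precisely as the quotient (or rather: the span of non-degenerate tuples with degenerate ones set to zero), the identity descends. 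For the filtration bound, a single summand $\bigl(\varphi(x_0),\dots,\varphi(x_i),\psi(x_i),\dots,\psi(x_n)\bigr)$ has length
\begin{equation}
\sum_{k<i} d(\varphi(x_k),\varphi(x_{k+1})) + d(\varphi(x_i),\psi(x_i)) + \sum_{k\geq i} d(\psi(x_k),\psi(x_{k+1})) \leq L(x_0,\dots,x_n) + r,
\end{equation}
using that $\varphi,\psi$ are short and that $d(\varphi(x_i),\psi(x_i))\leq r$; hence $h(F_\ell)\subseteq F_{\ell+r}$. One must also verify that $h$ lands in ${\rm RC}(Y)$, i.e.\ all consecutive distances in the defining tuples are finite --- this follows since $d(\varphi(x_k),\varphi(x_{k+1}))\leq d(x_k,x_{k+1})<+\infty$, similarly for $\psi$, and $d(\varphi(x_i),\psi(x_i))\leq r<+\infty$.

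The main obstacle I anticipate is purely bookkeeping: confirming that the prism identity $\partial h+h\partial={\rm RC}(\psi)-{\rm RC}(\varphi)$ survives the degeneracy convention $x_i=x_{i+1}\leadsto 0$. The risk is that when $\varphi(x_i)=\psi(x_i)$ for some $i$ (so several prism summands degenerate and vanish), the cancellation pattern could in principle be disturbed; but in fact the terms that are killed by the convention on the left-hand side are matched by terms killed on the right, so the identity holds on the nose --- this is the one place where I would actually carry out the sign bookkeeping carefully rather than cite it. Everything else (functoriality of ${\rm RC}$, transitivity of filtered $r$-homotopy, and the final appeal to Proposition~\ref{prop:r-homotopic-R-filtered}) is routine.
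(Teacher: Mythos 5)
Your proposal is correct and follows essentially the same route as the paper: the prism homotopy $h_{n,j}(x_0,\dots,x_n)=(\varphi(x_0),\dots,\varphi(x_j),\psi(x_j),\dots,\psi(x_n))$ with alternating signs, the filtration estimate $L(h_{n,j}(x_0,\dots,x_n))\leq L(x_0,\dots,x_n)+r$, and the final appeal to Proposition \ref{prop:r-homotopic-R-filtered}. Your explicit reduction to a single elementary step in the $\sim_r$-chain (by adding chain homotopies) and the remark on the symmetric case make precise a point the paper leaves implicit, but the argument is the same.
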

\begin{proof}  The differential in the reachability complex is defined by the formula $\partial_n=\sum_i (-1)^i \partial_{n,i},$ where
\begin{equation}
\partial_{n,i}(x_0,\dots,x_n) = (x_0,\dots,\hat x_i,\dots,x_n).
\end{equation}
Set $f=\RC(\varphi)$ and $f'=\RC(\psi).$ For $0\leq j\leq n$ we
consider the map $h_{n,j}:\RC_n(X)\to \RC_{n+1}(Y)$ given by 
\begin{equation}
h_{n,j}(x_0,\dots,x_n) = (\varphi(x_0),\dots,\varphi(x_j),\psi(x_j), \dots,\psi(x_n))).
\end{equation}
These maps satisfy simplicial homotopy relations 
\begin{equation}
\partial_{n+1,0}h_{n,0} = f'_n, \hspace{1cm} \partial_{n+1,n+1}h_{n,n}=f_n, 
\end{equation}
\begin{equation}
\partial_{n+1,i}h_{n,j} = \begin{cases}
h_{n-1,j-1}\partial_{n,i}, & i<j,\\
\partial_{n,i}h_{n,i-1}, & i=j\ne 0,\\
h_{n-1,j} \partial_{n,i-1}, & i>j+1.
\end{cases}
\end{equation}
Therefore, if we set $h_n=\sum_j (-1)^j h_{n,j},$ we obtain $\partial_{n+1}h_n+h_{n-1}\partial_n = f'_n-f_n.$

If $d(\varphi,\psi)\leq r,$ then
\begin{equation}
L(h_{n,j}(x_0,\dots,x_n)) \leq L(x_0,\dots,x_n)+r.  
\end{equation}
It follows that $h:\RC_*(X)\to \RC_{*+1}(Y)$ is an $r$-homotopy between morphisms $\RR$-filtered chain complexes. Hence the statement follows from Proposition \ref{prop:r-homotopic-R-filtered}.
\end{proof}
\begin{corollary}
If $X$ and $Y$ are $r$-homotopy equivalent quasimetric spaces, then 
\begin{equation}
\SH^r_{n,I}(X)\cong \SH^r_{n,I}(Y).
\end{equation}
In particular, if $X$ is finite, then
$\SH^r_{n,I}(X)\cong \SH^r_{n,I}(M_r(X)).$
\end{corollary}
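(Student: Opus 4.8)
The plan is to deduce this directly from the preceding theorem together with the functoriality of $\SH^r_{n,I}$. First I would unwind the definition of $r$-homotopy equivalence from Section~2: if $X$ and $Y$ are $r$-homotopy equivalent, then by definition their images in ${\sf QMet}/\sim_r$ are isomorphic, so there exist short maps $\varphi:X\to Y$ and $\psi:Y\to X$ with $\psi\varphi\sim_r 1_X$ and $\varphi\psi\sim_r 1_Y$.

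Next I would apply the theorem just proved (that $r$-homotopic morphisms of quasimetric spaces induce equal morphisms on $r$-spectral homology) to the two pairs $\psi\varphi\sim_r 1_X$ and $\varphi\psi\sim_r 1_Y$. Combining this with the functoriality of $\SH^r_{n,I}:{\sf QMet}\to{\sf Mod}(\KK)$ yields $\SH^r_{n,I}(\psi)\circ\SH^r_{n,I}(\varphi)=\SH^r_{n,I}(\psi\varphi)=\SH^r_{n,I}(1_X)=\mathrm{id}$, and symmetrically $\SH^r_{n,I}(\varphi)\circ\SH^r_{n,I}(\psi)=\mathrm{id}$. Hence $\SH^r_{n,I}(\varphi)$ is an isomorphism with inverse $\SH^r_{n,I}(\psi)$, which proves the first assertion. (Equivalently: $\SH^r_{n,I}$ factors through ${\sf QMet}/\sim_r$ and therefore carries isomorphisms there to isomorphisms.)

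For the ``in particular'' part I would simply invoke Theorem~\ref{th:minimal_models}: when $X$ is finite, $M_r(X)$ is by construction $r$-minimal and $r$-homotopy equivalent to $X$, so the first part applies verbatim with $Y=M_r(X)$, giving $\SH^r_{n,I}(X)\cong\SH^r_{n,I}(M_r(X))$.

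I expect no genuine obstacle here: the corollary is a purely formal consequence of the homotopy-invariance theorem and of $\SH^r_{n,I}$ being a functor on ${\sf QMet}$, both already in place. The only point worth a second glance is that the invariance statement is phrased for honest short maps rather than for morphisms in ${\sf QMet}/\sim_r$; but ``equal morphisms on $\SH^r_{n,I}$'' is exactly the statement that $\SH^r_{n,I}$ descends to the quotient category, so passing from $r$-homotopy equivalences to isomorphisms of spectral homology is immediate.
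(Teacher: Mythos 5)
Your argument is correct and is exactly the intended one: the paper states this corollary without proof, as an immediate formal consequence of the preceding homotopy-invariance theorem, the functoriality of $\SH^r_{n,I}$, and Theorem \ref{th:minimal_models}. Nothing to add.
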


\subsection{Relation of spectral homology with other types of homology}

Let $X$ be a quasimetric space. Then in our notation its magnitude homology can be defined as
\begin{equation}\label{eq:MH}
\MH_{n,\ell}(X) = H_n\left(\frac{F_\ell \RC(X)}{F_{(-\infty ,\ell)}\RC(X)}\right),
\end{equation}
and the 
blurred magnitude homology \cite{govc2021persistent} are defined as a persistent module such that 
\begin{equation}
{\rm BMH}_{n}(X)(\ell) = H_n(F_\ell \RC(X)).
\end{equation}

\begin{theorem}\label{th:other_types_of_homology} Let $X$ be a quasimetric space and $G$ be a digraph. Then the following holds.
\begin{enumerate}
\item The magnitude homology can be presented as $0$-spectral homology
\[
{\rm MH}_{n,\ell}(X)\cong \SH^0_{n,\{\ell\}}(X).\]
In particular, for any $r\geq 0$ the group $\SH^{r}_{n,\{\ell\}}(X)$ is an $r$-homotopy invariant subquotient of ${\rm MH}_{n,\ell}(X).$
\item The persistent module of  blurred magnitude homology (see \cite{govc2021persistent}) can be presented as persistent $0$-spectral homology
\[
 {\rm BMH}_{n}(X)\cong {\rm PSH}^0_{n}(X).   
\]
In particular, for any $r\geq 0$ the persistent module ${\rm PSH}^r_{n}(X)$ is an $r$-homotopy invariant subquotient persistent  module of ${\rm BHM}_n(X).$ 
\item The reachability homology (see \cite{hepworth2023reachability}) can be presented as $r$-spectral homology  
\[{\rm RH}_n(X)\cong \SH^r_{n,\RR}(X)\] for any $r\geq 0.$
\item For any integer $r\geq 1$ the $r$-th page of the magnitude-path spectral sequence of $G$ can be presented as $(r-1)$-spectral homology 
\[E^r_{n,\ell-n}(G)\cong \SH^{r-1}_{n,\{\ell\}}(G).\]
\item In particular, path homology can be described as
\[
{\rm PH}_n(G)=\SH^1_{n,\{n\}}(G).\]
\end{enumerate}
\end{theorem}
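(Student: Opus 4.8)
The plan is to deduce each of the five items by unwinding the definitions of the relevant intervals and filtrations and then quoting the structural results already established; no genuinely new argument should be needed, and the work is essentially bookkeeping.

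For (1) I would observe that $\SH^0_{n,I}(X)=H_n(\RC(X)_I)$ by definition, and that for $I=\{\ell\}$ one has $R_I=(-\infty,\ell]$ and $L_I=(-\infty,\ell)$, so that $\RC(X)_{\{\ell\}}=F_\ell\RC(X)/F_{(-\infty,\ell)}\RC(X)$ is precisely the complex appearing in \eqref{eq:MH}; hence $\MH_{n,\ell}(X)\cong\SH^0_{n,\{\ell\}}(X)$. The ``in particular'' clause then follows from Proposition~\ref{prop:second_descr_SH}, which presents $\SH^r_{n,I}(C)$ as the subquotient ${\rm SZ}^r_{n,I}(C)/{\rm SB}^r_{n,I}(C)$ of $H_n(C_I)=\SH^0_{n,I}(C)$, combined with the $r$-homotopy invariance theorem proved just above. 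For (2) I would similarly note that for $I=(-\infty,\ell]$ one has $R_I=(-\infty,\ell]$ and $L_I=\emptyset$, so $\RC(X)_{(-\infty,\ell]}=F_\ell\RC(X)$ and ${\rm PSH}^0_n(X)(\ell)=H_n(F_\ell\RC(X))={\rm BMH}_n(X)(\ell)$ levelwise; the point that needs checking is that the transition maps coincide, i.e.\ that the map $f_{(-\infty,\ell],(-\infty,\ell']}$ of Remark~\ref{rem:nat_int} is exactly the one induced by the inclusion $F_\ell\RC(X)\subseteq F_{\ell'}\RC(X)$, which is immediate from the construction of $f_{I,J}$. The ``in particular'' clause then follows as in (1), using that the subquotient description of $\SH^r_{n,I}$ is natural in $I$, so ${\rm PSH}^r_n$ is a sub-quotient persistent module of ${\rm PSH}^0_n$.

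For (3) I would use that every generator $(x_0,\dots,x_n)$ of $\RC(X)$ has finite length $L(x_0,\dots,x_n)=\sum_i d(x_i,x_{i+1})<+\infty$ by the defining condition, so $\RC(X)=\bigcup_\ell F_\ell\RC(X)$; Proposition~\ref{prop:SH_I=R} then gives $\SH^r_{n,\RR}(X)=H_n(\RC(X))$ for every $r\geq 0$, and it remains only to invoke \cite{hepworth2023reachability} for the identity $H_n(\RC(X))={\rm RH}_n(X)$. For (4) I would view $G$ as the quasimetric space ${\rm Q}(G)$, note that $\RC({\rm Q}(G))$ and $\RC(G)$ have the same underlying complex, and observe that, all finite digraph distances being integers, the length of any generator is an integer, whence $F_r\RC({\rm Q}(G))=F_{\lfloor r\rfloor}\RC(G)$ for every real $r$; thus, as a filtered complex, $\RC({\rm Q}(G))$ is exactly the $\RR$-extension of the $\ZZ$-filtered digraph reachability complex treated in Proposition~\ref{prop:spectral=spectral}, and that proposition delivers $E^r_{n,\ell-n}(G)\cong\SH^{r-1}_{n,\{\ell\}}(G)$ for all integers $r\geq 1$. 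Finally, (5) is the specialisation $r=2$, $\ell=n$ of (4), together with the recalled identification $E^2_{n,0}(G)={\rm PH}_n(G)$ of the zeroth row of the second page of the magnitude-path spectral sequence.

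I do not expect any serious obstacle: the whole argument is a matter of matching up intervals, filtrations and indexing conventions with results that are already in place. If anything merits attention it is (2), where one must verify that the isomorphism respects the persistence structure and not merely hold at each level, and (3), where one relies on the cited fact that the concrete model of reachability homology in use is indeed computed by the complex $\RC(X)$.
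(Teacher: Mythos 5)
Your proposal is correct and follows exactly the route of the paper's own (much terser) proof: items (1) and (2) by unwinding the definitions of $R_{\{\ell\}}$, $L_{\{\ell\}}$ and $R_{(-\infty,\ell]}$, $L_{(-\infty,\ell]}$ against \eqref{eq:MH} and Remark~\ref{rem:nat_int}, item (3) from Proposition~\ref{prop:SH_I=R}, item (4) from Proposition~\ref{prop:spectral=spectral}, and item (5) as the case $r=2$, $\ell=n$ together with $E^2_{n,0}(G)={\rm PH}_n(G)$. The extra checks you flag (compatibility of the persistence transition maps in (2), exhaustiveness of the filtration in (3)) are exactly the bookkeeping the paper leaves implicit, and they go through as you describe.
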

\begin{proof}
(1) and (2) obviously follow from the definitions equation  \eqref{eq:SH^0} and Remark \ref{rem:nat_int}. (3) follows from Proposition  \ref{prop:SH_I=R}. (4) follows from Proposition \ref{prop:spectral=spectral}.
\end{proof}

\subsection{Low-dimensional spectral homology}

In this section we fix a quasimetric space $X$ and we will omit it in some notations. For an interval $I$ we denote by $\sim_I$ the equivalence relation on $X$ such that $x\sim_I y$ if and only if there is $s\in I$ such that $x\sim_s y.$

\begin{proposition} Let $I$ be an interval and $r\geq 0$ be a real number. Then, assuming $0\in I,$ we have
\begin{equation}
\SH^r_{0,I}(X) \cong  \KK\cdot (X/\sim_{I^{+r}}). 
\end{equation}
In particular, 
\begin{equation}
\SH^r_{0,\{0\}}(X) \cong  \KK\cdot (X/\sim_r).
\end{equation}
If $0\notin I,$ then $\SH^r_{0,I}(X)=0.$
\end{proposition}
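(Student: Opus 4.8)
The plan is to compute $\SH^r_{0,I}(X)$ directly from the description in Corollary \ref{cor:SH-good-formula}, namely as the image of $\SH^0_{0,I_r}(X) \to \SH^0_{0,I^r}(X)$, where $\SH^0_{0,J}(X) = H_0(\RC(X)_J)$. First I would note that $\RC(X)$ in degrees $0$ and $1$ has the following shape: $\RC_0(X) = \KK\cdot X$ sits in filtration level $F_0$ (since $L$ of a length-zero tuple is $0$), while a generator $(x_0,x_1)$ of $\RC_1(X)$ lies in $F_s$ for $s \geq d(x_0,x_1)$, and $\partial(x_0,x_1) = (x_1) - (x_0)$. Hence for a non-empty interval $J = R\setminus L$, the chain complex $\RC(X)_J$ in degree $0$ is $\KK\cdot X$ if $0 \in R\setminus L = J$, and is $0$ if $0 \in L$ (and automatically $0$ if $0 \notin R$, which can only happen if $R = \emptyset$, but then $J$ is empty). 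Since $0 \in I$ forces $0 \in I_r$ and $0 \in I^r$ as well (recall $I_r \preccurlyeq I \preccurlyeq I^r$ all contain $I$), in the case $0 \in I$ both $\RC(X)_{I_r}$ and $\RC(X)_{I^r}$ have $\KK\cdot X$ in degree $0$.

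Next I would identify $H_0(\RC(X)_J)$ for $0\in J$ as $\KK\cdot X$ modulo the relations $(x_1) \sim (x_0)$ coming from the image of $\partial$ on those degree-one generators $(x_0,x_1)$ that actually lie in $F_R$, i.e.\ with $d(x_0,x_1) \in R_J$, equivalently $d(x_0,x_1) \leq s$ for some $s \in J$ (using that $0\in J$, one checks $d(x_0,x_1)\in R_J$ iff there is $s\in J$ with $d(x_0,x_1)\le s$). Thus $H_0(\RC(X)_J) \cong \KK\cdot(X/\approx_J)$, where $\approx_J$ is the equivalence relation generated by $x \approx_J y$ whenever $d(x,y) \leq s$ or $d(y,x)\leq s$ for some $s\in J$ — which is exactly the relation "$x\sim_s y$ for some $s\in J$" in the terminology of the proposition. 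Applying this to $J = I_r$ and $J = I^r$, the natural map $\SH^0_{0,I_r}(X) \to \SH^0_{0,I^r}(X)$ is the surjection $\KK\cdot(X/\approx_{I_r}) \twoheadrightarrow \KK\cdot(X/\approx_{I^r})$ induced by the coarser equivalence relation (here one needs $R_{I_r}\subseteq R_{I^r}$, which holds). Its image is therefore all of $\KK\cdot(X/\approx_{I^r})$, and since $\approx_{I^r}$ is precisely the relation $\sim$ of the statement, we get $\SH^r_{0,I}(X) \cong \KK\cdot(X/\sim)$. The special case $I = \{0\}$ gives $I^r = [0,r]$, and "$x\sim_s y$ for some $s\in[0,r]$" collapses to $x\sim_r y$, yielding the second displayed formula.

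For the case $0\notin I$: since all three of $I_r, I, I^r$ contain $I$ but we only assumed something about $I$, I would split into $0 < \inf I$ versus $\sup I < 0$. If $0 < \inf I$ then $0 \notin R_I$? — no, wait, $R_I$ is downward closed and contains $I$, so $0 \in R_I$; rather the point is $0 \in L_I$ fails or holds: if $\sup I < 0$ then $0 \notin R_I$, so $\RC(X)_{I^r}$ could still be nonzero only if $0\in R_{I^r}=R_I+r$; but more simply, if $0\notin I$ then either $0 < i$ for all $i\in I$, so $0\in L_I\subseteq L_{I^r}$ and hence $H_0(\RC(X)_{I^r})=0$, giving image $0$; or $0 > i$ for all $i\in I$, so $0\notin R_I = R_{I_r}$ — but actually I should be careful, since we need the image of $H_0(\RC(X)_{I_r})\to H_0(\RC(X)_{I^r})$, and it suffices that the target vanishes or the source vanishes. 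In the case $\sup I<0$, we have $0\notin R_{I_r}$ (as $R_{I_r}=R_I$ and $\sup R_I = \sup I <0$), so $\RC(X)_{I_r}$ vanishes in degree $0$, forcing $H_0 = 0$. In the case $\inf I>0$, we have $0 \in L_{I^r}$ (since $L_{I^r}=L_I$ and every element of $I^r\supseteq I$ is... hmm, $L_{I^r}=L_I=\{t: t< i\ \forall i\in I\}\ni 0$), so $H_0(\RC(X)_{I^r})=0$. Either way $\SH^r_{0,I}(X)=0$.

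The main obstacle I anticipate is purely bookkeeping around the interval arithmetic: carefully verifying, in each case, whether $0$ belongs to $R_{I_r}$, $L_{I_r}$, $R_{I^r}$, $L_{I^r}$ — and making sure the claimed identification $\approx_{I^r} = {\sim}$ matches the proposition's definition "$x\sim_s y$ for some $s\in I^r$" on the nose (in particular that the equivalence relation generated by single steps with $s$ ranging over $I^r$ equals the relation where one allows a different $s$ at each step, which holds because $I^r$ is an interval containing $0$ — a smaller $s$ always works once $0\in I^r$, so one may take a uniform large $s$). None of this is deep, but it is the part where a sign- or endpoint-error could creep in, so I would state a small lemma isolating "$H_0(\RC(X)_J)\cong\KK\cdot(X/{\approx_J})$ when $0\in J$, and $=0$ when $0\notin J$" and then just feed it $J=I_r$ and $J=I^r$.
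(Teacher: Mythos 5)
Your proposal is correct and follows essentially the same route as the paper: identify $(C_J)_0$ and the image of $\partial_1$ to get $\SH^0_{0,J}(X)\cong \KK\cdot(X/\sim_J)$, then observe that the comparison map $\SH^0_{0,I_r}\to\SH^0_{0,I^r}$ is the surjection onto the coarser quotient, and kill the degree-zero term when $0\notin I$. Your extra care in checking that the relation generated by single steps with $s$ varying over the interval agrees with ``$x\sim_s y$ for some $s$'' (by taking the maximal $s$ along a finite chain) and in verifying $R_{I_r}\subseteq R_{I^r}$ fills in bookkeeping the paper leaves implicit.
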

\begin{proof}  We set $C_I=F_{R_I}\RC(X)/F_{L_I}\RC(X).$ If $0\notin I,$ then $(C_I)_0=0$ and $\SH^r_{0,I}(X)=0.$ If $0\in I,$ we have $(C_I)_0=\KK\cdot X$ and $(C_I)_1 = \KK\cdot \{(x,y)\mid d(x,y)\in I\}.$ Therefore $\Im(\partial_1:(C_I)_1\to (C_I)_0)$ is spanned by differences $x-y,$ where $d(x,y)\in I.$ It follows that $\SH^0_{0,I}(X)=\KK\cdot(X/\sim_I).$ Hence $\SH^r_{0,I}(X)=\Im(\KK\cdot(X/\sim_{I_{-r}}) \to \KK\cdot(X/\sim_{I^{+r}}))=\KK\cdot(X/\sim_{I^{+r}}).$
\end{proof}

A pair of points $(x,y)$ of a quasimetric space $X$ is called $\ell$-adjacent, if $d(x,y)=\ell$ and there is no a point $a$ such that $d(x,a)+d(a,y)=\ell$ and $a\ne x,y.$ The set of $\ell$-adjacent pairs is denoted by ${\rm Adj}_\ell.$ Then it is known  \cite[Cor. 4.5]{leinster2021magnitude}  that 
\begin{equation}
\MH_{1,\ell}(X) \cong  \KK\cdot {\rm Adj}_\ell.
\end{equation}
We will prove a generalization of this statement describing $\SH^r_{1,\{\ell\}}(X)$ for $\ell>r\geq 0.$

Let $\ell>r\geq 0.$ Denote by $P_{\leq \ell+r}$ the set of all pairs of points $(x,y)$ such that $d(x,y)\leq \ell+r.$ 
\begin{equation}
P_{\leq \ell+r} = \{ (x,y)\in X^2\mid d(x,y)\leq \ell+r \}.
\end{equation}
We consider the minimal equivalence relation $\sim$ on $P_{\leq \ell+r}$ such that the following holds
\begin{itemize}
    \item $(x,y)\sim (x',y),$ if $d(x,x')\leq r$ and  $d(x,x')+d(x',y)\leq \ell+r,$
    \item  $(x,y)\sim (x,y'),$ if $d(y',y)\leq r$ and  $d(x,y')+d(y',y)\leq \ell+r.$
\end{itemize}

We say that a pair $(x,y)\in P_{\leq \ell+r}$ is $(\ell,r)$-trivial, if there exists a point $a$ such that 
\begin{equation}
d(x,a)+d(a,y)\leq \ell+r, \hspace{5mm} d(x,a)<\ell, \hspace{5mm} d(a,y)<\ell. 
\end{equation}
In particular, if $d(x,y)<\ell,$ then $(x,y)$ is $(\ell,r)$-trivial. We denote the set of $(\ell,r)$-trivial pairs by 
\begin{equation}
T_{\ell,r}\subseteq P_{\leq \ell+r}.
\end{equation}

We say that a pair $(x,y)$ is homotopy $(\ell,r)$-trivial, if it is equivalent to a trivial pair with respect to the equivalence relation defined above. The set of homotopy $(\ell,r)$-trivial pairs is denoted by $\overline{T}_{\ell,r}$
\begin{equation}
    T_{\ell,r} \subseteq \overline{T}_{\ell,r} \subseteq P_{\leq \ell+r}.
\end{equation}
A pair $(x,y)\in P_{\leq \ell+r}$ is called $(\ell,r)$-adjacent, if it is not homotopy $(\ell,r)$-trivial
\begin{equation}
{\rm Adj}_{\ell,r} = P_{\leq \ell+r}\setminus \overline{T}_{\ell,r}. 
\end{equation}
We consider the restricted equivalence relation on ${\rm Adj}_{\ell,r}.$

\begin{proposition}\label{prop:sh^0_{1,l,l+r}}
Let $X$ be a quasimetric space and $\ell>r\geq 0.$ Then 
\begin{equation}
\SH^0_{1,[\ell,\ell+r]}(X) \cong \KK\cdot ({\rm Adj}_{\ell,r}/\sim).
\end{equation}
\end{proposition}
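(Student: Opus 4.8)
The plan is to unwind the definitions. Write $I=[\ell,\ell+r]$, so $R_I=(-\infty,\ell+r]$ and $L_I=(-\infty,\ell)$, and hence $C_I = F_{\ell+r}\RC(X)/F_{(-\infty,\ell)}\RC(X)$. Since $\SH^0_{1,I}(X)=H_1(C_I)$, I need to compute this homology in degree $1$. First I would identify the modules: $(C_I)_0$ receives no nonzero generators because a single vertex $(x_0)$ has length $L=0$, which lies in $L_I=(-\infty,\ell)$ (using $\ell>0$); so $(C_I)_0=0$ and therefore $H_1(C_I)=\Ker(\partial_1)\big/\Im(\partial_2)$ reduces to $(C_I)_1/\Im(\partial_2)$. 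Next, $(C_I)_1$ is freely generated by pairs $(x,y)$ with $x\ne y$ and $\ell\le d(x,y)\le \ell+r$ — these are exactly the pairs in $P_{\le\ell+r}$ that are not already ruled out; note a pair with $d(x,y)<\ell$ is killed in the quotient, which matches the convention that such pairs are $(\ell,r)$-trivial. Then $(C_I)_2$ is freely generated by triples $(x,a,y)$ with no consecutive repetition and $d(x,a)+d(a,y)\le \ell+r$ (and each of $d(x,a),d(a,y)$ finite), modulo those with $d(x,a)+d(a,y)<\ell$ which vanish in the quotient.

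The core computation is to understand $\Im(\partial_2)\subseteq (C_I)_1$. We have $\partial_2(x,a,y) = (a,y)-(x,y)+(x,a)$, where any term whose length falls below $\ell$ is interpreted as $0$ in $C_I$, and any term with a consecutive repetition is $0$. I would split the analysis of a triple $(x,a,y)$ with $s:=d(x,a)+d(a,y)\in(-\infty,\ell+r]$, $a\ne x$, $a\ne y$, into cases according to which of the three faces survive in the quotient: the face $(x,y)$ survives iff $d(x,y)\ge\ell$ (and it is automatically $\le\ell+r$ by the triangle inequality, so it lies in $P_{\le\ell+r}$); the face $(a,y)$ survives iff $d(a,y)\ge\ell$, the face $(x,a)$ iff $d(x,a)\ge\ell$. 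The key observation is: if both $d(x,a)<\ell$ and $d(a,y)<\ell$, then the only possibly-surviving face is $(x,y)$, and $\partial_2(x,a,y)=-(x,y)$ (or $0$ if also $d(x,y)<\ell$); such triples $(x,a,y)$ are precisely the witnesses that $(x,y)$ is $(\ell,r)$-trivial. Hence the images of these triples span exactly $\KK\cdot T_{\ell,r}$ inside $(C_I)_1$. On the other hand, if at least one of $d(x,a),d(a,y)$ is $\ge\ell$, say $d(a,y)\ge\ell$, then since $s\le\ell+r$ we get $d(x,a)\le r$; the relation $\partial_2(x,a,y)=0$ then reads $(a,y)-(x,y)+(x,a)=0$ in the quotient, and since $d(x,a)\le r$, the face $(x,a)$ has length $\le r<\ell$ so it dies, leaving $(a,y)=(x,y)$ — i.e. exactly the first generating relation $(x,y)\sim(x',y)$ of $\sim$ (with $x'=x$, after relabeling), provided $(x,y)$ and $(a,y)$ both lie in $P_{\le\ell+r}$, which holds since their lengths are $\ge\ell$ automatically by triangle inequality when... — here I must be careful: $(a,y)$ has $d(a,y)\ge\ell$ and $\le s\le \ell+r$ so it is in $P_{\le\ell+r}$; $(x,y)$ has $d(x,y)\le s\le\ell+r$, but might have $d(x,y)<\ell$, in which case the relation says $(a,y)=0$, i.e. $(a,y)\in\KK\cdot T_{\ell,r}$ — consistent with $(a,y)$ being $(\ell,r)$-trivial via the witness $x$. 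The symmetric case $d(x,a)\ge\ell$ gives the second generating relation.

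Putting these together: $\Im(\partial_2)$ is the submodule of $\KK\cdot P_{\le\ell+r}$ generated by (i) all generators $(x,y)$ with $(x,y)\in T_{\ell,r}$, and (ii) all differences $(x,y)-(x',y')$ for the two adjacency moves defining $\sim$. Therefore $(C_I)_1/\Im(\partial_2) \cong \KK\cdot P_{\le\ell+r}\big/\langle T_{\ell,r},\ \sim\rangle$. Taking the quotient by the generators (i) first collapses exactly the homotopy-trivial pairs — because quotienting $\KK\cdot P_{\le \ell+r}$ by the subspace spanned by $T_{\ell,r}$ and then by the $\sim$-relations is the same as spanning by $\overline T_{\ell,r}=$ the $\sim$-saturation of $T_{\ell,r}$, together with identifying $\sim$-equivalent surviving generators — and the quotient of the remaining free module $\KK\cdot\mathrm{Adj}_{\ell,r}$ by the restricted relation $\sim$ is $\KK\cdot(\mathrm{Adj}_{\ell,r}/\!\sim)$. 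This yields the claimed isomorphism $\SH^0_{1,[\ell,\ell+r]}(X)\cong \KK\cdot(\mathrm{Adj}_{\ell,r}/\!\sim)$.

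\textbf{Main obstacle.} The delicate point is the bookkeeping in the case analysis for $\partial_2$, specifically verifying that \emph{every} relation imposed by some triple $(x,a,y)$ is accounted for by either a $T_{\ell,r}$-witness or one of the two $\sim$-moves, and conversely that every defining move of $\sim$ (with its side condition $d(x,a)+d(a',y)\le \ell+r$) genuinely arises from an honest triple in $(C_I)_2$ that has no consecutive repetitions and all faces with finite length. In particular I must check the edge cases where $a$ coincides with $x$ or $y$ (excluded as degenerate) and where a face length lands exactly at $\ell$ or exactly at $\ell+r$, and confirm the side conditions on $\sim$ in the statement match precisely the constraint $d(x,a)+d(a,y)\le\ell+r$ coming from membership in $F_{\ell+r}$. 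Once this combinatorial dictionary is nailed down, the passage to the quotient is formal.
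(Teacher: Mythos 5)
Your proposal is correct and follows essentially the same route as the paper: identify $(C_I)_0=0$, compute $(C_I)_1$ and $(C_I)_2$, and analyse $\partial_2$ on triples case-by-case according to which faces survive, recognising the triples with both $d(x,a),d(a,y)<\ell$ as exactly the witnesses of $(\ell,r)$-triviality and the remaining ones (where one distance is $\geq\ell$, forcing the other to be $\leq r$) as the two generating moves of $\sim$. The paper packages the identical computation as the verification that $\Ker(\theta)=\Im(\partial_2)$ for an explicit surjection $\theta:(C_I)_1\to\KK\cdot(\mathrm{Adj}_{\ell,r}/\sim)$, so the difference is purely presentational.
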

\begin{proof}
Set $C=F_{[\ell,\ell+r]}\RC(X).$ Then $C_0=0,$ 
\begin{equation}
 C_1=\KK\cdot \{(x,y)\mid \ell \leq  d(x,y)\leq \ell+r\},
 \end{equation}
 \begin{equation}
 C_2=\KK\cdot \{(x,a,y)\mid \ell\leq d(x,a)+d(a,y)\leq \ell+r\}.   
\end{equation}
Consider the map $\theta: C_1\to \KK\cdot ({\rm Adj}_{\ell,r}/\sim)$ defined so that it sends pairs from $\overline{T}_{\ell,r}$ to zero, and all other pairs $(x,y)\in {\rm Adj}_{\ell,r}$ to their equivalence classes $\theta(x,y)=[(x,y)].$ It is easy to see that $\theta$ is surjective. We need to prove that 
\begin{equation}
\Ker(\theta) = \Im(\partial:C_2\to C_1).
\end{equation}

Let us prove that  $\Im(\partial : C_2\to C_1)\subseteq \Ker(\theta).$ If $(x,a,y)\in C_2,$ then either $d(x,a)<\ell$ or $d(a,y)<\ell,$ because if $d(x,a)\geq \ell$ and $d(a,y)\geq \ell,$ then $d(x,a)+d(a,y)\geq 2\ell>\ell+r.$ Assume first that $d(x,a)<\ell.$ Then we have four cases.

Case 1.1: $d(a,y)<\ell$ and $d(x,y)<\ell.$ Then $\partial(x,a,y)=0.$

Case 1.2: $d(a,y)<\ell$ and $d(x,y)\geq \ell.$ Then $\partial(x,a,y)=-(x,y).$ In this case $(x,y)\in T_{\ell,r}$ and hence $\theta(\partial(x,a,y))=0.$ 

Case 1.3: $d(a,y)\geq \ell$ and $d(x,y)<\ell.$ Then $d(x,a)\leq r$ and  $\partial(x,a,y)=(a,y).$ In this case  $(x,y)\sim (a,y)$ and $(x,y)\in T_{\ell,r}.$ In particular, $ (a,y)\in \overline{T}_{\ell,r}$ and $\theta(\partial(x,a,y))=0.$

Case 1.4: $d(a,y)\geq \ell$ and $d(x,y)\geq \ell.$ Then $d(x,a)\leq r$ and  $\partial(x,a,y)= -(x,y)+(a,y).$ In this case $(x,y)\sim (a,y),$ and $\theta(\partial(x,a,y))=0.$ 

If $d(a,y)<\ell,$ we have symmetric cases. So we proved that $\Im(\partial:C_2\to C_1)\subseteq \Ker(\theta).$ 

Let us prove that $\Ker(\theta) \subseteq \Im(\partial:C_2\to C_1).$ First we need to check that if $(x,y),(x',y')\in P_{\leq \ell+r}$ and  $(x,y)\sim (x',y'),$ then $(x,y)-(x',y')\in \Im(\partial:C_2\to C_1).$ Here we assume that a pair $(x,y)\in P_{\leq \ell+r}$ such that $d(x,y)<\ell$ represents the zero element in $C_1,$ and a triple $(x,a,y)$ such that $d(x,a)+d(a,y)<\ell$ represents the zero element in $C_2.$ 

Let $(x,y),(x',y)\in P_{\leq \ell+r}$ such that  $d(x,x')\leq r$ and  $d(x,x')+d(x',y)\leq \ell+r.$ Then $(x,x',y)\in C_2$ and $\partial(x,x',y)=-(x,y)+(x',y).$ Therefore $(x,y)-(x',y)\in \Im(\partial).$ The case of pairs $(x,y),(x,y')\in P_{\leq \ell+r}$ such that $d(y',y)\leq r$ and $d(x,y')+d(y',y)\leq \ell+ r$ is similar.  It follows that the differences of equivalent pairs from $P_{\leq \ell+r}$ are in the image of $\partial.$

Now assume that $(x,y)\in T_{\ell,r}.$ Then there exists $a$ such that $d(x,a)+d(a,y)\leq \ell+r$ and $d(x,a),d(a,y)<\ell.$ Therefore $(x,a,y)\in C_2$ and $\partial(x,a,y)=-(x,y).$ It follows that $\Ker(\theta) \subseteq \Im(\partial:C_2\to C_1).$
\end{proof}

\begin{theorem}
Let $X$ be a quasimetric space and $\ell>r\geq 0.$ Then there is an isomorphism 
\begin{equation}
\SH^r_{1,\{\ell\}}(X)\cong \KK\cdot (({\rm Adj}_\ell\cap {\rm Adj}_{\ell,r})/\sim), 
\end{equation}
where $\sim$ is the equivalence relation induced from the equivalence relation on ${\rm Adj}_{\ell,r}.$
\end{theorem}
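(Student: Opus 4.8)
The plan is to realise $\SH^r_{1,\{\ell\}}(X)$ as the image of the map on first homology induced by the natural morphism $C_{\{\ell\}}\to C_{[\ell,\ell+r]}$ of \eqref{eq:ses_of_comp2'}, and then to read that image off from Proposition~\ref{prop:sh^0_{1,l,l+r}}. First I would unwind the intervals: for $I=\{\ell\}$ one has $I_r=[\ell-r,\ell]$, $I^r=[\ell,\ell+r]$ and $I_r\setminus I=[\ell-r,\ell)$. Since $\ell>r$ we have $\ell-r>0$, hence $0\notin[\ell-r,\ell)$, so the degree-$0$ term of $C_{[\ell-r,\ell)}$ vanishes and $H_0(C_{[\ell-r,\ell)})=0$. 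Plugging this into the long exact sequence associated with the short exact sequence \eqref{eq:ses_of_comp1'} shows that $H_1(C_{I_r})\to H_1(C_{\{\ell\}})$ is surjective, i.e.\ ${\rm SZ}^r_{1,\{\ell\}}(X)=H_1(C_{\{\ell\}})$. Together with ${\rm SB}^r_{1,\{\ell\}}(X)=\Ker\bigl(H_1(C_{\{\ell\}})\to H_1(C_{[\ell,\ell+r]})\bigr)$ this yields
\[
\SH^r_{1,\{\ell\}}(X)=\frac{{\rm SZ}^r_{1,\{\ell\}}(X)}{{\rm SB}^r_{1,\{\ell\}}(X)}\;\cong\;\Im\bigl(H_1(C_{\{\ell\}})\longrightarrow H_1(C_{[\ell,\ell+r]})\bigr)
\]
(equivalently, this is Corollary~\ref{cor:SH-good-formula} combined with the surjectivity just noted).

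Next I would make the two groups and the map explicit. On the source, $H_1(C_{\{\ell\}})=\SH^0_{1,\{\ell\}}(X)=\MH_{1,\ell}(X)\cong\KK\cdot{\rm Adj}_\ell$ by Theorem~\ref{th:other_types_of_homology}(1) and \cite[Cor.~4.5]{leinster2021magnitude}; on the target, $H_1(C_{[\ell,\ell+r]})\cong\KK\cdot({\rm Adj}_{\ell,r}/{\sim})$ by Proposition~\ref{prop:sh^0_{1,l,l+r}}, via the surjection $\theta$ built in its proof. In degree $1$ the morphism $C_{\{\ell\}}\to C_{[\ell,\ell+r]}$ is the inclusion $\KK\cdot\{(x,y):d(x,y)=\ell\}\hookrightarrow\KK\cdot\{(x,y):\ell\leq d(x,y)\leq\ell+r\}$, so composing with $\theta$ shows that under these identifications the map $H_1(C_{\{\ell\}})\to H_1(C_{[\ell,\ell+r]})$ becomes
\[
\mu\colon\KK\cdot{\rm Adj}_\ell\longrightarrow\KK\cdot({\rm Adj}_{\ell,r}/{\sim}),\qquad (x,y)\longmapsto\begin{cases}[(x,y)], & (x,y)\in{\rm Adj}_{\ell,r},\\ 0, & (x,y)\in\overline{T}_{\ell,r}\end{cases}
\]
(every $(x,y)\in{\rm Adj}_\ell$ has $d(x,y)=\ell\leq\ell+r$, hence lies in $P_{\leq\ell+r}$ and so is either in $\overline{T}_{\ell,r}$ or in ${\rm Adj}_{\ell,r}=P_{\leq\ell+r}\setminus\overline{T}_{\ell,r}$). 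Therefore $\Im(\mu)$ is the submodule of $\KK\cdot({\rm Adj}_{\ell,r}/{\sim})$ spanned by $S:=\{[(x,y)]:(x,y)\in{\rm Adj}_\ell\cap{\rm Adj}_{\ell,r}\}$, and, being generated by a subset of a basis, it is free on $S$.

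It remains to identify $S$ with $({\rm Adj}_\ell\cap{\rm Adj}_{\ell,r})/{\sim}$. Write $A={\rm Adj}_\ell\cap{\rm Adj}_{\ell,r}\subseteq B={\rm Adj}_{\ell,r}$, where $\sim$ on $A$ is the restriction of $\sim$ on $B$, which is itself the restriction of $\sim$ on $P_{\leq\ell+r}$. The inclusion $A\hookrightarrow B$ induces a map $\pi\colon A/{\sim}\to B/{\sim}$ whose image is exactly $S$. Since restricting an equivalence relation to $A\subseteq B$ and then keeping it inside $A$ coincides with restricting it directly to $A$, any two elements of $A$ that are equivalent in $B$ are already equivalent in $A$; hence $\pi$ is injective and identifies $A/{\sim}$ with $S$. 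Consequently
\[
\SH^r_{1,\{\ell\}}(X)\cong\Im(\mu)=\KK\cdot S\cong\KK\cdot\bigl(({\rm Adj}_\ell\cap{\rm Adj}_{\ell,r})/{\sim}\bigr).
\]

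The interval bookkeeping in the first step and the observation that $C_{\{\ell\}}\to C_{[\ell,\ell+r]}$ is, in each degree, an inclusion of generating sets are routine. The one genuinely delicate point is the final step — verifying that $\pi$ is injective, i.e.\ that passing from the equivalence relation on ${\rm Adj}_{\ell,r}$ to its restriction on ${\rm Adj}_\ell\cap{\rm Adj}_{\ell,r}$ neither merges nor splits classes — since this is precisely what makes the free module on $S$ coincide with the free module on $({\rm Adj}_\ell\cap{\rm Adj}_{\ell,r})/{\sim}$. Once $\mu$ has been pinned down through $\theta$, the rest is formal.
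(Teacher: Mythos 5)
Your proof is correct and follows essentially the same route as the paper's: both use $\ell>r$ to reduce everything to Proposition~\ref{prop:sh^0_{1,l,l+r}}, realising $\SH^r_{1,\{\ell\}}(X)$ as the image in $\SH^0_{1,[\ell,\ell+r]}(X)\cong\KK\cdot({\rm Adj}_{\ell,r}/\sim)$ of the classes of pairs with $d(x,y)=\ell$, and then identifying that image with the free module on $({\rm Adj}_\ell\cap{\rm Adj}_{\ell,r})/\sim$. The only (cosmetic) difference is that the paper obtains the needed surjectivity from the vanishing of $(C_{[\ell-r,\ell]})_0$, i.e.\ every degree-one chain is a cycle, whereas you extract it from the long exact sequence via $H_0(C_{[\ell-r,\ell)})=0$.
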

\begin{proof} We set $C_I=F_R\RC(X)/F_L\RC(X).$ 
Since $r<\ell$ we have $(C_{[\ell-r,\ell]})_0=0.$ Then we obtain that any element in $(C_{[\ell-r,\ell]})_1$  is a cycle. It follows that 
\begin{equation}
\SH^r_{1,\{\ell\}}(X) = \Im((C_{[\ell-r,\ell]})_1 \longrightarrow \SH^0_{1,[\ell,\ell+r]}(X)).
\end{equation}
Therefore, $\SH^r_{1,\{\ell\}}(X)$ is a submodule of $\SH^0_{1,[\ell,\ell+r]}(X)$ generated by pairs $(x,y)$ such that $d(x,y)=\ell.$ Then the statement follows from Proposition \ref{prop:sh^0_{1,l,l+r}}.
\end{proof}

Assume that $x,y\in \RR^n.$  Then we consider the following filled ellipsoid of revolution with foci $x,y$ 
\begin{equation}
{\rm Ell}_{r}(x,y) = \{a\in \RR^n \mid d(x,a)+d(a,y)\leq d(x,y)+r\}.
\end{equation}
We will also consider open balls of radius $\ell$
\begin{equation}
B^\circ_\ell(x) = \{a\in \RR^n\mid d(x,a)<\ell\}.
\end{equation}
We define the \emph{$r$-thick open interval} between $x,y$ as
\begin{equation}
I_{-r}(x,y) = B^\circ_{d(x,y)}(x) \cap {\rm Ell}_{r}(x,y) \cap B^\circ_{d(x,y)}(y).
\end{equation}
It can be depicted as the following gray area between $x$ and $y$.
\[
\begin{tikzpicture}
\draw [color=white, fill=gray!30] plot  coordinates {
(-2.19,0) 
(-2.17,0.35)
(-1.5,0.45)
(-1,0.47) 
(0,0.5) 
(1,0.47) 
(1.5,0.45)
(2.17,0.35) 
(2.19,0) 
(2.17,-0.35)
(1.5,-0.45)
(1,0.-0.47) 
(0,-0.5) 
(-1,-0.47) 
(-1.5,-0.45)
(-2.17,-0.35) 
(-2.19,0)
};
\draw (0,0) ellipse (3cm and 5mm);
    \filldraw  (2.18,0) circle (1pt) node[anchor=west]{$y$};
    \filldraw  (-2.18,0) circle (1pt) node[anchor=east]{$x$};
    \draw[dashed] (-2.18,0) arc (180:165:4.36);
    \draw[dashed] (-2.18,0) arc (0:15:-4.36);
    \draw[dashed] (2.18,0) arc (0:15:4.36);
    \draw[dashed] (2.18,0) arc (180:165:-4.36);
\end{tikzpicture}
\]

So for $x,y\in X$ such that $d(x,y)=\ell$ we have $(x,y)\in T_{\ell,r}$ if and only if $I_{-r}(x,y)\cap X\ne \emptyset.$

\begin{corollary}
Let $\ell>r\geq 0$ and $X\subseteq \RR^n.$ Assume that for each $x,y\in X$ such that $d(x,y)=\ell,$ the intersection of $I_{-r}(x,y)$ with $X$ is not empty. Then 
\begin{equation}
\SH^r_{1,\{\ell\}}(X)=0.    
\end{equation}
\end{corollary}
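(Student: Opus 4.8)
The plan is to reduce everything to the theorem immediately preceding this corollary together with the geometric reformulation of $(\ell,r)$-triviality stated just above it. By that theorem we have $\SH^r_{1,\{\ell\}}(X)\cong \KK\cdot((\mathrm{Adj}_\ell\cap\mathrm{Adj}_{\ell,r})/\sim)$, so it suffices to prove that the set $\mathrm{Adj}_\ell\cap\mathrm{Adj}_{\ell,r}$ is empty; then the free module on the empty set is $0$ and we are done.

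To see this, I would take an arbitrary pair $(x,y)\in\mathrm{Adj}_\ell$. By the definition of $\ell$-adjacency such a pair satisfies $d(x,y)=\ell$, and in particular $(x,y)\in P_{\leq\ell+r}$. The hypothesis of the corollary provides a point $a\in I_r(x,y)\cap X$, and by the sentence preceding the statement this is exactly the condition $(x,y)\in T_{\ell,r}$ (here one uses that $d(x,y)=\ell$, so that the three inequalities $d(x,a)+d(a,y)\leq\ell+r$, $d(x,a)<\ell$, $d(a,y)<\ell$ defining $(\ell,r)$-triviality coincide with $a\in\mathrm{Ell}_r(x,y)\cap B^\circ_{d(x,y)}(x)\cap B^\circ_{d(x,y)}(y)$, the Euclidean metric being symmetric). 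Since $T_{\ell,r}\subseteq\overline{T}_{\ell,r}$ and $\mathrm{Adj}_{\ell,r}=P_{\leq\ell+r}\setminus\overline{T}_{\ell,r}$, it follows that $(x,y)\notin\mathrm{Adj}_{\ell,r}$. As $(x,y)\in\mathrm{Adj}_\ell$ was arbitrary, $\mathrm{Adj}_\ell\cap\mathrm{Adj}_{\ell,r}=\emptyset$, whence $\SH^r_{1,\{\ell\}}(X)\cong\KK\cdot(\emptyset/\sim)=0$.

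There is no real obstacle in this argument: the corollary is a direct unwinding of the previous theorem and the already-established equivalence between the geometric condition $I_r(x,y)\cap X\neq\emptyset$ and membership in $T_{\ell,r}$. The only step demanding any attention is confirming that every pair counted by $\mathrm{Adj}_\ell$ has distance exactly $\ell$, so that the hypothesis can be applied to it; this is immediate from the definition of $\ell$-adjacent pairs.
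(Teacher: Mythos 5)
Your proof is correct and follows exactly the paper's argument: the hypothesis forces every $\ell$-adjacent pair into $T_{\ell,r}$, hence ${\rm Adj}_\ell\cap{\rm Adj}_{\ell,r}=\emptyset$, and the preceding theorem gives the vanishing. You simply spell out the details (the equivalence $I_r(x,y)\cap X\ne\emptyset \Leftrightarrow (x,y)\in T_{\ell,r}$ and the inclusion $T_{\ell,r}\subseteq \overline{T}_{\ell,r}$) that the paper leaves implicit.
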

\begin{proof} In this case all elements from ${\rm Adj}_\ell$ are in $T_{\ell,r},$ and ${\rm Adj}_\ell \cap {\rm Adj}_{\ell,r}=\emptyset.$
\end{proof}

Assume that $X\subseteq \RR^n.$ We say that a (continuous) path $\gamma:[0,1]\to X$ from $x=\gamma(0)$ reduces the distance to $y\in X,$  if  $d(x,y)>d(\gamma(t),y)$ for any $0<t\leq 1$ (we do not assume that $\gamma(1)=y$). In other words, $\gamma$ reduces the distance to $y,$ if 
\begin{equation}
\gamma((0,1])\subseteq B^\circ_{d(\gamma(0),y)}(y).
\end{equation}
For example, the path from $x$
\[
\begin{tikzpicture}
\draw (-2.18,0) arc (230:280:2);
\filldraw  (-2.18,0) circle (1pt) node[anchor=east]{$x$};
\filldraw  (2.18,0) circle (1pt) node[anchor=west]{$y$};
\filldraw  (-0.53,-0.43) circle (1pt);
   \draw[dashed] (-2.18,0) arc (180:170:4.36);
    \draw[dashed] (-2.18,0) arc (0:10:-4.36);
\end{tikzpicture}
\]
reduces the distance to $y.$ Note that, if $r>0,$ and $\gamma$ is a path from $x$ reducing the distance to $y$ we have $\gamma(\varepsilon)\in I_{-r}(x,y)$ for  small enough $\varepsilon>0.$ 

\begin{corollary}\label{cor:path}
Let $\ell>r> 0$ and $X\subseteq \RR^n.$ If $x,y\in X$ such that $d(x,y)=\ell$ and there is a path from $x$ reducing the distance to $y,$ then $(x,y),(y,x)\in T_{\ell,r}.$ In particular, if for any distinct $x,y\in X$ there is either a path from $x$ reducing the distance to $y,$ or a path from $y$ reducing the distance to $x,$ then $\SH^r_{1,\{\ell\}}(X)=0.$ 
\end{corollary}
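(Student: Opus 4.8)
The plan is to reduce the statement to the previous theorem, which identifies $\SH^r_{1,\{\ell\}}(X)$ with $\KK\cdot(({\rm Adj}_\ell\cap{\rm Adj}_{\ell,r})/\sim)$. To prove the first assertion, I would show directly that if $d(x,y)=\ell$ and there is a path $\gamma$ from $x$ reducing the distance to $y$, then $(x,y)$ is $(\ell,r)$-trivial, i.e.\ $(x,y)\in T_{\ell,r}$. The key point, noted in the remark just before the corollary, is that for small enough $\varepsilon>0$ the point $a=\gamma(\varepsilon)$ lies in $I_r(x,y)$; unwinding the definition of the $r$-thick open interval, this means exactly that $d(x,a)+d(a,y)\le \ell+r$, $d(x,a)<\ell$ and $d(a,y)<\ell$. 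That is precisely the condition witnessing $(x,y)\in T_{\ell,r}$. Note that here we must also observe $a\ne x,y$, which follows since $d(x,a)\le \|x-\gamma(\varepsilon)\|$ can be taken positive (if $\gamma$ is constant at $x$ then it does not reduce the distance to $y$, as $d(\gamma(t),y)=\ell$ for all $t$, contradicting $d(\gamma(t),y)<\ell$ for $t>0$) and $a\ne y$ since $d(a,y)<\ell=d(x,y)$ but also $a\in X$ with $d(x,a)>0$.

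For the symmetric claim that $(y,x)\in T_{\ell,r}$ as well, I would use that $d(y,x)$ need not equal $d(x,y)$ in a general quasimetric space, but here $X\subseteq\RR^n$ carries the Euclidean metric, so $d$ is symmetric: $d(y,x)=d(x,y)=\ell$, and the ellipsoid ${\rm Ell}_r(y,x)={\rm Ell}_r(x,y)$, the balls $B^\circ_\ell(x),B^\circ_\ell(y)$ are unchanged, hence $I_r(y,x)=I_r(x,y)$ and the same point $a$ witnesses $(y,x)\in T_{\ell,r}$. In fact, since $T_{\ell,r}\subseteq\overline T_{\ell,r}$, both $(x,y)$ and $(y,x)$ lie in $\overline T_{\ell,r}$, so neither is in ${\rm Adj}_{\ell,r}$.

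For the final sentence, suppose that for every pair of distinct $x,y\in X$ one of the two reducing paths exists. Take any pair $(x,y)\in{\rm Adj}_\ell\cap{\rm Adj}_{\ell,r}$, so in particular $x\ne y$ (as $d(x,y)=\ell>0$) and $d(x,y)=\ell$. By hypothesis there is a path from $x$ reducing the distance to $y$, or one from $y$ reducing the distance to $x$; in the first case the paragraph above gives $(x,y)\in\overline T_{\ell,r}$, contradicting $(x,y)\in{\rm Adj}_{\ell,r}$; in the second case it gives $(y,x)\in\overline T_{\ell,r}$, but then by symmetry of $d$ on $\RR^n$ also $(x,y)\in\overline T_{\ell,r}$ (running the argument with the roles of $x,y$ swapped produces a point $a$ with $d(y,a)+d(a,x)\le\ell+r$, $d(y,a)<\ell$, $d(a,x)<\ell$, which by commutativity of $d$ is exactly the condition for $(x,y)\in T_{\ell,r}$), again a contradiction. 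Hence ${\rm Adj}_\ell\cap{\rm Adj}_{\ell,r}=\emptyset$, so the quotient set is empty and $\SH^r_{1,\{\ell\}}(X)=0$ by the preceding theorem.

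I expect the only mildly delicate point to be the bookkeeping of $\varepsilon$: one must choose $\varepsilon$ small enough so that all three inequalities $d(x,a)+d(a,y)\le\ell+r$, $d(x,a)<\ell$, $d(a,y)<\ell$ hold simultaneously. The condition $d(a,y)<\ell$ holds for every $\varepsilon>0$ by the definition of "reduces the distance", the condition $d(x,a)<\ell$ holds for small $\varepsilon$ by continuity of $\gamma$ at $0$ (since $d(x,x)=0<\ell$), and the condition $d(x,a)+d(a,y)\le\ell+r$ holds for small $\varepsilon$ because $d(x,a)\to 0$ while $d(a,y)\to\ell$, so the sum tends to $\ell<\ell+r$ — this is where $r>0$ is used. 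All of this is routine analysis on $\RR^n$, so there is no serious obstacle once the reduction to the previous theorem is in place.
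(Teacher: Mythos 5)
Your proposal is correct and follows exactly the route the paper intends: the observation that $\gamma(\varepsilon)\in I_r(x,y)$ for small $\varepsilon>0$ (using $r>0$ and continuity of $\gamma$ at $0$), the identification of membership in $I_r(x,y)\cap X$ with the defining condition of $T_{\ell,r}$, symmetry of the Euclidean metric for $(y,x)$, and the reduction of the vanishing statement to ${\rm Adj}_\ell\cap{\rm Adj}_{\ell,r}=\emptyset$ via the preceding theorem. The paper leaves all of this implicit, so your write-up simply makes the intended argument explicit; no gaps.
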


\begin{example}
Consider the standard circle $S^1\subseteq \RR^2.$ By Corollary \ref{cor:path} for any $0<r<\ell$ we have $\SH^r_{1,\{\ell\}}(S^1)=0.$ 
\end{example}

\begin{example}
Let $X\subseteq S^1$ be an arc lying in one open semicircle. 
\[
\begin{tikzpicture}
\draw (0,0) arc (-45:-135:2);
\filldraw  (0,0) circle (1pt) node[anchor=west]{$y$};
\filldraw  (-2.82,0) circle (1pt) node[anchor=east]{$x$};
\end{tikzpicture}
\]
In other words the anticlockwise directed distance from $x$ to $y$ is strictly less than $\pi.$  
Then by Corollary \ref{cor:path} we obtain $\SH^r_{1,\{\ell\}}(X)=0$ for any $0<r<\ell.$
\end{example}

\begin{example}
Now assume that $X\subseteq S^1$ is an arc that does not lie in one semicircle. 
\[
\begin{tikzpicture}
\draw (0,0) arc (115:425:1);
\filldraw  (0,0) circle (1pt) node[anchor=south]{$x$};
\filldraw  (0.83,0) circle (1pt)
node[anchor=south]{$y$};
\end{tikzpicture}
\]
So the anticlockwise directed distance from $x$ to $y$ is strictly greater than $\pi.$ Moreover, let us assume that the points are close enough $0<d(x,y)<\sqrt{2}/2.$ Then we claim that for $0<r<\ell$
\begin{equation}
\SH^r_{1,\{\ell\}}(X) =
\begin{cases}
 \ZZ\cdot \{(x,y),(y,x)\},& \ell=d(x,y),   \\
 0, & \ell\ne d(x,y)
\end{cases}
\end{equation} 
(in this case $\SH^r_{1,\{\ell\}}(X)$ has finite rank, while $\SH^0_{1,\{\ell\}}(X)=\MH_{1,\ell}(X)$ has  infinite rank).
Let us prove this. 

If $\ell\ne d(x,y),$ then for any  $x',y'\in X$ such that $d(x',y')=\ell$ there is either a path from $x$ reducing the distance to $y,$ or a path from $y$ reducing the distance to $x.$ So $\SH^r_{1,\ell}(X)=0.$

Now assume that $\ell=d(x,y).$ Then all pairs $(x',y')\in {\rm Adj}_\ell$ except $(x,y)$ and $(y,x)$ lie in $T_{\ell,r}$ because there is a path from $x'$ reducing the distance to $y'.$ We claim that $(x,y),(y,x) \notin \overline{T}_{\ell,r}.$ For this we write everything in coordinates in a symmetric form $y=(a,b)$ and $x=(-a,b).$ If $(x,y)\sim (x',y')\in P_{\leq \ell+r},$ using that $\ell+r< \sqrt{2}$ and $r<\ell,$ one can show that $x'$ is in the Quadrant two and $y'$ is in the Quadrant one.  
\[
\begin{tikzpicture}
\draw (0,0) arc (115:425:1);
\filldraw  (0,0) circle (1pt) node[anchor=south]{$x$};
\filldraw  (0.83,0) circle (1pt)
node[anchor=south]{$y$};
\filldraw  (-0.365,-0.3) circle (1pt) node[anchor=south]{$x'$};
\filldraw  (1.33,-0.5) circle (1pt) node[anchor=west]{$y'$};
\end{tikzpicture}
\]
Then there is no a point $a\in X$ such that $d(x',a)<\ell$ and $d(a,y')<\ell.$  Hence $(x,y)\notin \overline{T}_{\ell,r}.$ This also implies that $(x,y)$ is not equivalent to $(y,x).$ Therefore $(({\rm Adj}_\ell\cap {\rm Adj}_{\ell,r})/\sim) \cong  \{(x,y),(y,x)\}.$
\end{example}

\printbibliography

\end{document}